\let\@wraptoccontribs\wraptoccontribs
\newtheorem{thm}{Theorem}[section]
\newtheorem{prop}[thm]{Proposition}
\newtheorem{lem}[thm]{Lemma}
\newtheorem{cor}[thm]{Corollary}
\newtheorem*{main}{Main Theorem}
\newtheorem*{key}{Key Lemma}
\theoremstyle{definition}
\newtheorem{defn}[thm]{Definition}
\newtheorem{ex}[thm]{Example}
\newtheorem{rem}[thm]{Remark}
\newcommand{\Bm}[1]{\partial_{M}^{N#1}}
\newcommand{\Bmg}{\partial_{M}^{g(N)}}
\newcommand{\Bmprime}{\partial_M^{N'}}
\newcommand{\bm}{\partial_{M}}
\newcommand{\Ncal}{\mathcal{N}}
\newcommand{\Hyp}{\mathbb{H}}
\newcommand{\N}{\mathbb{N}}
\newcommand{\R}{\mathbb{R}}
\newcommand{\teich}{\mathcal{T}(S)}
\newcommand{\thick}{\mathcal{T}_{\epsilon}(S)}
\newcommand{\thickprime}{\mathcal{T}_{\epsilon'}(S)}
\newcommand{\Mod}{\mathrm{Mod}(S)}
\newcommand{\MC}{\mathcal{M}(S)}
\newcommand{\pmf}{\mathbb{P}\mathrm{MF}(S)}
\title{Morse Boundaries of Proper Geodesic Metric Spaces}
\author{Matthew Cordes}
\thanks{\emph{2010 Mathematics Subject Classification:} 20F65, 20F67. \\  \emph{Keywords:} boundaries, Morse geodesics, Teichm\"uller space, mapping class groups.  \\ The author was partially supported by NSF grant DMS-1106726.}
\begin{document}

\begin{abstract}  We introduce a new type of boundary for proper geodesic spaces, called the Morse boundary, that is constructed with rays that identify the ``hyperbolic directions" in that space. This boundary is a quasi-isometry invariant and thus produces a well-defined boundary for any finitely generated group. In the case of a proper $\mathrm{CAT}(0)$ space this boundary is the contracting boundary of Charney and Sultan, and in the case of a proper Gromov hyperbolic space this boundary is the Gromov boundary. We prove three results about the Morse boundary of Teichm\"uller space. First, we show that the Morse boundary of the mapping class group of a surface is homeomorphic to the Morse boundary of the Teichm{\"u}ller space of that surface. Second, using a result of Leininger and Schleimer, we show that Morse boundaries of Teichm{\"u}ller space can contain spheres of arbitrarily high dimension. Finally, we show that there is an injective continuous map of the Morse boundary  of Teichm{\"u}ller space into the Thurston compactification of Teichm{\"u}ller space by projective measured foliations. 

	An appendix includes a corrigendum to the paper introducing refined Morse gauges to correct the proof and statement of Lemma 2.10. 
	\end{abstract}

\maketitle

\section{Introduction and Background}

Boundaries have been an extremely fruitful tool in the study of hyperbolic groups. One classical boundary, the visual boundary, is defined to be equivalence classes of geodesic rays, where one ray is equivalent to the other if they have bounded Hausdorff distance. Roughly, one topologizes the boundary by declaring open neighborhoods of a ray $\gamma$ to be the rays that stay close to $\gamma$ for a long time. Gromov in \cite{gromov:1987aa} showed that a quasi-isometry of a hyperbolic metric space induces a homeomorphism on the visual boundary, giving the notion of a well-defined boundary of a hyperbolic group. 

The visual boundary for a $\mathrm{CAT}(0)$ space can be similarly defined. Unfortunately, Croke and Kleiner show that the visual boundary of a $\mathrm{CAT}(0)$ space is not generally preserved under quasi-isometry \cite{Croke:2000aa}. Charney and Sultan in \cite{charney:2015aa} showed that if one restricts their attention to rays with hyperbolic-like behavior, so-called contracting rays, then one can construct a quasi-isometry invariant  boundary for any complete $\mathrm{CAT}(0)$ space. In this paper we show that if one considers another class of rays with hyperbolic-like behavior, Morse rays, then one can generalize the boundary of Charney and Sultan to construct a quasi-isometry invariant boundary for any proper geodesic space. We call this boundary the \emph{Morse boundary}. In the cases of proper $\mathrm{CAT}(0)$ spaces and hyperbolic spaces, the Morse boundary coincides with the contracting boundary of Charney and Sultan \cite{charney:2015aa} and the visual boundary respectively. 

The generality in which this boundary is defined means it is a new quasi-isometry invariant for every finitely generated group. While the Morse boundary of a group may be empty, Sisto in \cite{sisto:2016aa} showed that every group in the class of acylindrically hyperbolic groups unified by Osin in \cite{osin:2016aa} (which includes mapping class groups, $\mathrm{Out}(F_n)$, relatively hyperbolic groups, among others) will always have non-empty Morse boundary making the Morse boundary of particular interest for these groups.

A geodesic $\gamma$ in a metric space is called \emph{$N$-Morse}, where $N$ is a function 
$[1,\infty) \times [0, \infty) \to [0,\infty)$, if for any $(\lambda, \epsilon)$-quasi-geodesic $\sigma$ with endpoints on $\gamma$, we have $\sigma \subset \Ncal_{N(\lambda, \epsilon)}(\gamma)$, the $N(\lambda, \epsilon)$-neighborhood of $\gamma$. In a $\delta$-hyperbolic space, the well-known Morse lemma tells us that every ray is Morse and furthermore they are all $N$-Morse for the same $N$. In fact, if every geodesic in a space is $N$-Morse, then the space $\delta$-hyperbolic. On the other hand, no geodesic ray in Euclidean space is Morse. It is in this way that Morse geodesics point in the ``hyperbolic directions" of the space. 

The following key lemma states that if a ray is close to an $N$-Morse ray, then it is uniformly close where the uniform constant depends only on $N$. Variants of this lemma will be repeatedly useful.

\begin{key} Let $X$ be a geodesic metric space and $\alpha \colon [0, \infty) \rightarrow X$ be an $N$-Morse geodesic ray. Let $\beta \colon [0, \infty) \rightarrow X$ be a geodesic ray with $\alpha(0)=\beta(0)$ such that $\alpha, \beta$ have bounded Hausdorff distance. Then there exists a constant $\delta_N$ that depends only on $N$ such that $d(\alpha(t), \beta(t)) < \delta_N$ for all $t \in [0, \infty)$.
\end{key}

The Morse boundary of a space $X$, $\bm X$, is the set of all Morse geodesic rays in $X$ where two geodesic rays $\gamma, \gamma' \colon [0, \infty) \to X$ are identified if there exists a constant $K$ such that $d(\gamma(t), \gamma'(t))< K$ for all $t>0$. We denote an equivalence class of a ray $\alpha\in \bm X$ by $[\alpha]$.
If we fix a basepoint $p$ and a Morse function $N$ and consider the subset of the boundary that consists of all rays in $X$ with Morse function at most $N$: $$\Bm{} X_p= \{[\alpha] \mid \exists \beta \in [\alpha] \text{ that is an $N$-Morse geodesic ray with } \beta(0)=p\},$$ 
the key lemma affords us the ability to topologize this set in a similar manner as one does for the visual boundary of hyperbolic spaces. We endow the Morse boundary with the topology of the direct limit over all Morse gauges and show that this boundary is independent of basepoint. We also show that it is a visibility space, that is, any two distinct points in the Morse boundary can be joined by a Morse bi-infinite geodesic. In summary we prove:

\begin{main} Given a proper geodesic space $X$, the Morse boundary, $\bm X$, equipped with the direct limit topology, is 
\begin{enumerate}
\item a visibility space;
\item invariant under quasi-isometry of $X$; and
\item homeomorphic to the visual boundary if $X$ is hyperbolic and the contracting boundary if $X$ is proper $\mathrm{CAT}(0)$.
\end{enumerate}
\end{main}

In Section \ref{subsec:mpmaps} we identify a criterion which guarantees that a quasi-isometric embedding induces a topological embedding of the boundary. We first define a map between boundaries $\Omega \colon \partial_M X \to \partial_M Y$ to be \emph{Morse preserving} if for all Morse gauges $N$, $\partial_M^N X \hookrightarrow \partial_M^{N'} Y$ where $N'$ depends on $N$. We show that if a quasi-isometric embedding $f \colon X \to Y$ induces a Morse preserving map on the boundaries then the image of the Morse boundary of $X$ topologically embeds into the Morse boundary of $Y$.

We apply this result to Teichm\"uller space, $\teich$, with the Teichm\"uller metric. It was shown by Masur and Wolf in \cite{Masur:1995aa} that $\teich$ with this metric is not Gromov hyperbolic. But Minsky in  \cite{minsky:1996aa} showed that all geodesics in the $\epsilon$-thick part of Teichm\"uller space, $\thick$, are $N$-Morse where $N$ depends on $\epsilon$. So the $\bm \teich$ is non empty. Using a result of Leininger and Schleimer \cite{leininger:2014aa} we show that for any $n \geq 2$, there exists a surface of finite type $S$ and a Morse preserving map $\Omega \colon \bm \Hyp^n \to \bm \teich$. Thus for any $n \geq 2$, there exists a surface of finite type $S$ such that $\bm \teich$ contains a topologically embedded $S^{n-1}$.

We next show, using the Masur-Minsky subsurface projection machinery, that the Morse boundary of the mapping class group of a surface of finite type, $\Mod$, is homeomorphic to the Morse boundary of the Teichm\"uller space of that surface. Finally we show that there is a continuous injective map from the Morse boundary of  Teichm\"uller space to Thurston's compactification of $\teich$ by projective measured foliations. 

The author would like to thank his advisor Ruth Charney for her exceptional support and enthusiastic guidance. The author also like to thank Moon Duchin and Matthew Durham for the fruitful discussions and especially Moon for her invaluable feedback in the process of writing this paper. Finally the author with like to thank the anonymous referee for a very careful review and many helpful suggestions.

\subsection{Basic Definitions and Theorems} Let $(X,d)$ be a metric space. A \emph{geodesic} in $X$ is an isometric embedding $\gamma$ from a finite or infinite interval of $\R$ into $X$. We say that $(X,d)$ is a \emph{geodesic metric space} if every two points in $X$ are joined by a geodesic. We say $X$ is a \emph{proper metric space} if for every $x\in X$ and every $r >0$, the closed ball $\overline{B(x,r)}$ is compact.  

\begin{defn}[Hausdorff distance] Let $X$ be a metric space and let $\mathcal{N}_\eta(A)$ denote the $\eta$-neighborhood of a subset $A \subset X$. The Hausdorff distance between $A,B \subset X$ is defined by $$\inf\{\eta \mid A \subset \mathcal{N}_\eta(B), B \subset \mathcal{N}_\eta(A) \}.$$ 
\end{defn}

\begin{defn}[quasi-isometry; quasi-geodesic] A map $f \colon X \to Y$ between metric spaces is called a \emph{$(\lambda, \epsilon)$-quasi-isometric embedding}, where $\lambda \geq 1, \epsilon \geq 0$, if for every $a,b \in X$ $$\frac{1}{\lambda}d_X(a,b)- \epsilon \leq d_Y(f(a), f(b)) \leq \lambda d_X(a,b)+ \epsilon.$$ We say $f$ is a \emph{quasi-isometry} if there exists a constant $C \geq 0$ such that for every $y \in Y$ there exists an $a \in X$ such that $d_Y(y, f(a))<C$. If $X$ is a (possibly infinite) segment of $\R$, then we call the image of $f$ a \emph{$(\lambda, \epsilon)$-quasi-geodesic}.

Given a quasi-isometry $f \colon X \to Y$, there exists a \emph{quasi-inverse} $g\colon Y \to X$, which is itself a quasi-isometry such that there exists a constant $C$, depending only on $\lambda, \epsilon$, with the property that for all $x \in X, y \in Y$, $$d_X(x, (g \circ f)(x)) \leq C \text{ and } d_Y(y, (f \circ g)(y)) \leq C.$$
 \end{defn}

\begin{defn}[Morse (quasi)-geodesics] A (quasi)-geodesic $\gamma$ in a metric space is called \emph{$N$-Morse}, where $N$ is a function 
$[1,\infty) \times [0, \infty) \to [0,\infty)$, if for any $(\lambda, \epsilon)$-quasi-geodesic $\sigma$ with endpoints on $\gamma$, we have $\sigma \subset \Ncal_{N(\lambda, \epsilon)}(\gamma)$.
We call the function $N(\lambda, \epsilon)$ a \emph{Morse gauge}.\end{defn}

We will also use two easy corollaries of Arzel\`a-Ascoli \cite[Theorem 47.1]{munkres:2000aa}:

\begin{cor} Let $X$ be a proper metric space and $p \in X$. Then any sequence of geodesics $\beta_n \colon [0, L_n]  \to X$ with $\beta_n(0)=p$ and $L_n \to \infty$ has a subsequence that converges uniformly on compact sets to a geodesic $\beta \colon [0, \infty) \to X$. \end{cor}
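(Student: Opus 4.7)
The plan is to apply the classical Arzelà–Ascoli theorem on each compact subinterval of $[0, \infty)$ and then diagonalize. First I would note that each $\beta_n$ is an isometric embedding and hence $1$-Lipschitz, so the family $\{\beta_n\}$, restricted to any fixed compact interval $[0, T]$, is equicontinuous. Moreover, for any $t \in [0, T]$ and any $n$ with $L_n \geq t$ we have $\beta_n(t) \in \overline{B(p, T)}$, which is compact since $X$ is proper. Thus the pointwise orbits $\{\beta_n(t)\}$ are precompact, and both hypotheses of Arzelà–Ascoli as stated in Munkres are satisfied on $[0, T]$ for all sufficiently large $n$ (since $L_n \to \infty$ ensures $L_n \geq T$ eventually).

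Next I would pass to a diagonal subsequence. For each $k \in \N$ I apply Arzelà–Ascoli on $[0, k]$ to extract a subsequence of $\{\beta_n\}$ converging uniformly on $[0, k]$ to some continuous map $\beta^{(k)} \colon [0, k] \to X$. By refining at each step — choosing the subsequence for $[0, k+1]$ inside that for $[0, k]$ — and then taking the diagonal subsequence $\{\beta_{n_j}\}$, I obtain a single subsequence converging uniformly on every $[0, k]$, hence uniformly on every compact subset of $[0, \infty)$. The pointwise limit defines a map $\beta \colon [0, \infty) \to X$, with $\beta(0) = p$ automatic from $\beta_n(0) = p$.

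Finally I would verify that $\beta$ is itself a geodesic ray. For any $s, t \geq 0$ and all $j$ with $L_{n_j} \geq \max(s,t)$ we have $d(\beta_{n_j}(s), \beta_{n_j}(t)) = |s - t|$; letting $j \to \infty$ and using continuity of $d$ together with the established convergence $\beta_{n_j}(s) \to \beta(s)$ and $\beta_{n_j}(t) \to \beta(t)$ yields $d(\beta(s), \beta(t)) = |s - t|$. Hence $\beta$ is an isometric embedding of $[0, \infty)$ into $X$, i.e., a geodesic ray.

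No step here is a serious obstacle: the argument is the standard Arzelà–Ascoli–plus–diagonal extraction. The only bookkeeping point is to use $L_n \to \infty$ to ensure that, for each fixed $k$, all but finitely many $\beta_n$ are actually defined on $[0, k]$, so that the application of Arzelà–Ascoli on that interval and the limiting identity $d(\beta(s), \beta(t)) = |s-t|$ make sense.
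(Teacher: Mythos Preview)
Your proof is correct and is exactly the standard Arzel\`a--Ascoli plus diagonal extraction argument the paper has in mind; the paper itself states this corollary without proof, merely citing Arzel\`a--Ascoli. There is nothing to add.
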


\begin{cor} \label{cor:AA2}Let $X$ be a proper metric space. Let $\beta_n \colon [L_n, M_n] \to X$ be any sequence of geodesics such that $L_n \to -\infty, M_n \to \infty$, and every $\beta_n(0)$ is in a set of bounded diameter. Then the sequence $(\beta_n)$ has a subsequence that converges uniformly on compact sets to a geodesic $\beta \colon (-\infty, \infty) \to X$. \end{cor}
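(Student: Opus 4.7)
The plan is to reduce to the standard Arzel\`a--Ascoli theorem applied on each compact subinterval $[-k,k]\subset\R$, and then extract a convergent subsequence by a diagonal argument. Since $L_n\to-\infty$ and $M_n\to\infty$, for each $k\in\N$ there exists $n_k$ such that $[-k,k]\subset[L_n,M_n]$ for all $n\geq n_k$; so the restrictions $\beta_n|_{[-k,k]}$ are eventually well-defined.

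To apply Arzel\`a--Ascoli on $[-k,k]$, I need equicontinuity and pointwise precompactness of the family $\{\beta_n|_{[-k,k]}\}$. Equicontinuity is immediate because each $\beta_n$ is an isometric embedding, hence $1$-Lipschitz. For pointwise precompactness, let $B$ be the bounded set containing every $\beta_n(0)$, of diameter $D$, and fix any $q\in B$. Since $\beta_n$ is $1$-Lipschitz, $\beta_n([-k,k])\subset \Ncal_k(B)\subset \overline{B(q,D+k)}$, which is compact by properness of $X$. Arzel\`a--Ascoli then produces a subsequence converging uniformly on $[-k,k]$ to some continuous map into $X$.

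Next I apply this successively for $k=1,2,3,\dots$, each time passing to a further subsequence, and then take the diagonal subsequence $(\beta_{n_j})$. This diagonal subsequence converges uniformly on every compact subset of $\R$ to a continuous map $\beta\colon\R\to X$. The limit is in fact an isometric embedding: for any $s,t\in\R$, eventually $s,t\in[L_{n_j},M_{n_j}]$, and by uniform convergence on $[-\max(|s|,|t|),\max(|s|,|t|)]$,
\[ d(\beta(s),\beta(t))=\lim_{j\to\infty} d(\beta_{n_j}(s),\beta_{n_j}(t))=|s-t|, \]
so $\beta$ is a bi-infinite geodesic.

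There is no real obstacle here; the argument is a routine adaptation of the proof of the preceding corollary. The only mild subtlety is that $\beta_n(0)$ is not assumed to equal a single point, but this is handled by enclosing the bounded set $B$ in a single closed ball and invoking properness to get compactness.
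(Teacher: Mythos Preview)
Your proof is correct and is exactly the standard Arzel\`a--Ascoli plus diagonal argument that the paper has in mind; the paper does not spell out a proof at all, merely recording the statement as an ``easy corollary of Arzel\`a--Ascoli.'' There is nothing to compare.
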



\begin{rem} I will repeatedly use Lemma 2.5 in \cite{charney:2015aa}. The statement of the lemma assumes a $\mathrm{CAT}(0)$ space, but the proof only requires a geodesic space.
\end{rem}

\section{Properties of Morse Geodesics}

The following lemma verifies that a quasi-geodesic with endpoints on a Morse geodesic segment has bounded Hausdorff distance with the geodesic.

\begin{lem} \label{near morse means hausdorff close} Let $X$ be a geodesic space and let $\alpha \colon [a, b] \to X$ be a $N$-Morse geodesic segment and let $\beta \colon [a', b'] \to X$ be a $(\lambda,\epsilon)$-quasi-geodesic such that $\alpha(a)=\beta(a')$ and $\alpha(b)=\beta(b')$. Then the Hausdorff distance between $\alpha$ and $\beta$ is bounded by $2N(\lambda,\epsilon')+(\lambda+\epsilon)$ where $\epsilon'= 2(\lambda+\epsilon)$ or if $\beta$ is continuous $2N(\lambda, \epsilon)$
\end{lem}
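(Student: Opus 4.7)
The plan is to handle two halves of the Hausdorff distance separately. The inclusion $\beta \subset \Ncal_{N(\lambda,\epsilon)}(\alpha)$ is immediate from the definition of $N$-Morse (and requires no continuity of $\beta$), so the real content is the reverse inclusion $\alpha \subset \Ncal_{?}(\beta)$, together with a reduction step that handles possibly discontinuous quasi-geodesics.

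First I would treat the case where $\beta$ is continuous. Fix $t\in[a,b]$ and set $C = N(\lambda,\epsilon)$. Partition $[a',b']$ into the two sets
\[
T_1 = \{\,s\in[a',b'] : \bar{\beta}(s)\in \Ncal_C(\alpha|_{[a,t]})\,\}, \qquad T_2 = \{\,s\in[a',b'] : \bar{\beta}(s)\in \Ncal_C(\alpha|_{[t,b]})\,\},
\]
(where here $\bar\beta=\beta$). Each is closed because $\beta$ is continuous and $x\mapsto d(x,K)$ is $1$-Lipschitz. Since $\beta\subset\Ncal_C(\alpha)$ by the Morse property, $T_1\cup T_2 = [a',b']$; since $\beta(a')=\alpha(a)$ and $\beta(b')=\alpha(b)$, we have $a'\in T_1$ and $b'\in T_2$. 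Connectedness of $[a',b']$ forces $T_1\cap T_2\neq\emptyset$. At a common point $s^*$ pick parameters $t_1\leq t\leq t_2$ with $d(\beta(s^*),\alpha(t_i))\leq C$. Then $|t_2-t_1|=d(\alpha(t_1),\alpha(t_2))\leq 2C$, so $t$ is within $C$ of at least one of $t_1,t_2$, giving $d(\alpha(t),\beta(s^*))\leq 2C$. This proves $\alpha\subset\Ncal_{2C}(\beta)$ and settles the continuous case.

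For general (possibly discontinuous) $\beta$, I would reduce to the continuous case by the standard continuification. Let $\bar\beta$ be obtained from $\beta$ by concatenating geodesic segments joining $\beta(n)$ to $\beta(n+1)$ for integer times in $[a',b']$ (and at the endpoints). A routine estimate shows $\bar\beta$ is a continuous $(\lambda,\epsilon')$-quasi-geodesic with $\epsilon'=2(\lambda+\epsilon)$, has the same endpoints as $\beta$, and lies at Hausdorff distance at most $\lambda+\epsilon$ from $\beta$ (any point on a connecting segment is within $\tfrac{1}{2}(\lambda+\epsilon)$ of an integer sample of $\beta$, and conversely). Applying the continuous case to $\alpha$ and $\bar\beta$ yields $\alpha\subset\Ncal_{2N(\lambda,\epsilon')}(\bar\beta)$, and combining with the Hausdorff bound between $\bar\beta$ and $\beta$ gives $\alpha\subset\Ncal_{2N(\lambda,\epsilon')+(\lambda+\epsilon)}(\beta)$. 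The easy direction $\beta\subset\Ncal_{N(\lambda,\epsilon)}(\alpha)$ is absorbed into the same constant (using monotonicity of $N$ in its second argument, or simply noting $N(\lambda,\epsilon)\leq 2N(\lambda,\epsilon')+(\lambda+\epsilon)$ trivially).

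The main obstacle is the reverse inclusion in the continuous case: one cannot use closest-point projection to $\alpha$ because it need not be continuous, so the IVT-style argument via the two closed sets $T_1,T_2$ is the key idea. Everything else is bookkeeping — verifying the continuification is a $(\lambda,2(\lambda+\epsilon))$-quasi-geodesic and tracking the additive $(\lambda+\epsilon)$ error when passing between $\beta$ and $\bar\beta$.
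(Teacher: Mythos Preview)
Your proof is correct and follows essentially the same approach as the paper: the Morse definition gives $\beta\subset\Ncal_{N(\lambda,\epsilon)}(\alpha)$ immediately, the reverse inclusion in the continuous case is handled by a connectedness argument, and the general case is reduced via the Bridson--Haefliger continuification (Lemma~III.H.1.11), exactly as the paper does. The paper phrases the continuous step slightly differently---it takes a maximal subinterval of $\alpha$ lying outside $\Ncal_{N(\lambda,\epsilon)}(\beta)$ and asserts ``by continuity of $\beta$'' the existence of a single parameter $z$ whose image is close to points of $\alpha$ on both sides of that interval---but your partition of $[a',b']$ into the two sets $T_1,T_2$ is precisely the rigorous justification of that assertion, so the arguments are really the same idea.
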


\begin{proof}

First assume that $\beta$ is continuous.
By definition $\beta \subset \mathcal{N}_{N(\lambda,\epsilon)}(\alpha)$

We now show that $\alpha \subset \mathcal{N}_{2N(\lambda, \epsilon)} (\beta)$. We follow an argument similar to Lemma 2.5 (3) \cite{charney:2015aa}. If $\alpha\subset \mathcal{N}_{N(\lambda, \epsilon)}(\beta)$ we have our bound. If not consider a maximal segment $[t, t'] \subset [a,b]$ such that $\alpha([t,t'])$ is disjoint from $\mathcal{N}_{N(\lambda, \epsilon)}(\beta)$. We know by continuity of $\beta$ that there exists a $z \in [a',b']$ such that $\beta(z)$ lies within $N(\lambda,\epsilon)$ of two points $\alpha(r), \alpha(r')$, with $r \in [a, t], r' \in [t', b]$. Thus by the triangle inequality, $d(\alpha(r), \alpha(r')) < 2N(\lambda,\epsilon)$ and since $\alpha$ is a geodesic any point on $\alpha$ between $\alpha(r)$ and $\alpha(r')$ is at most $N(\lambda,\epsilon)$ from one of $\alpha(r)$ and $\alpha(r')$ and thus any point in $[t,t']$ is within $2N(\lambda,\epsilon)$ of $z$. We conclude $\alpha \subset \mathcal{N}_{2N(\lambda,\epsilon)}(\beta)$.

If $\beta$ is not continuous, we use  Lemma 1.11 in \cite{bridson:1999fj} III.H to replace $\beta$ with $\beta'$, a continuous $(\lambda, \epsilon')$-quasi-geodesic such that $\beta'(a')=\beta(a')$ and $\beta'(b')=\beta(b')$ and the Hausdorff distance between $\beta$ and $\beta'$ is less than $(\lambda+ \epsilon)$. We do the proceeding proof with using $\beta'$ and allow for the Hausdorff distance between $\beta$ and $\beta'$ for the general estimate.
\end{proof}

We now show that triangles in a geodesic metric space with two $N$-Morse edges are slim.

\begin{lem} \label{lem:triangle slim} Let $X$ be a geodesic space and let $\alpha_1 \colon [0, A] \to X$ and $\alpha_2 \colon [0,B] \to Y$ be $N$-Morse geodesics such that $\alpha_1(0)=\alpha_2(0)=p$. Let $\gamma \colon [0,C] \to X$ be a geodesic joining $\alpha_1(A)$ and $\alpha_2(B)$. Then the triangle $\alpha_1 \cup \gamma \cup \alpha_2$ is $4N(3,0)$-slim.
\end{lem}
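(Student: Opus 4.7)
The plan is to split $\gamma$ at its point $z$ closest to $p$ and, joining $z$ to $p$ by an auxiliary geodesic $\tau$, assemble two continuous paths whose endpoints match those of $\alpha_1$ and $\alpha_2$. Concretely, parametrize $\gamma \colon [0,C] \to X$ with $\gamma(0) = \alpha_1(A)$ and $\gamma(C) = \alpha_2(B)$; the continuous function $s \mapsto d(p, \gamma(s))$ attains a minimum on $[0,C]$ at some $t_0$. Set $z = \gamma(t_0)$, let $\tau$ be a geodesic from $z$ to $p$, and form the arclength-parametrized concatenations
\[
\mu_1 = \gamma|_{[0, t_0]} \cdot \tau \quad \text{and} \quad \mu_2 = \tau^{-1} \cdot \gamma|_{[t_0, C]},
\]
running from $\alpha_1(A)$ to $p$ and from $p$ to $\alpha_2(B)$ respectively.

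The key step is to show each $\mu_i$ is a $(3,0)$-quasi-geodesic; I write it out for $\mu_1$, since $\mu_2$ is symmetric. Sub-paths lying entirely in one geodesic piece are trivial, so consider a sub-path crossing the junction $z$, with endpoints $\gamma(t_0 - u)$ and $\tau(v)$ (where $u, v \geq 0$) and length $u + v$. The closest-point property of $z$ forces $d(p, \gamma(t_0 - u)) \geq d(p,z)$, and $d(p, \tau(v)) = d(p,z) - v$, so the triangle inequality yields $d(\gamma(t_0 - u), \tau(v)) \geq v$. The triangle inequality through $z$ also gives $d(\gamma(t_0 - u), \tau(v)) \geq u - v$ when $u \geq v$. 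A short case split on $u \leq 2v$ versus $u \geq 2v$ then gives $\max(v, u-v) \geq (u+v)/3$, which is the lower bound required for a $(3,0)$-quasi-geodesic.

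Since $\alpha_i$ is $N$-Morse and $\mu_i$ is a continuous $(3,0)$-quasi-geodesic sharing its endpoints, Lemma~\ref{near morse means hausdorff close} gives $d_H(\alpha_i, \mu_i) \leq 2N(3,0)$. Hence each half of $\gamma$ lies within $2N(3,0)$ of the relevant $\alpha_i$, giving $\gamma \subset \Ncal_{2N(3,0)}(\alpha_1 \cup \alpha_2)$. Conversely, a point $x \in \alpha_1$ has some $y \in \mu_1$ with $d(x,y) \leq 2N(3,0)$; either $y \in \gamma$, and we are done, or $y \in \tau \subset \mu_2 \subset \Ncal_{N(3,0)}(\alpha_2)$ by the Morse property of $\alpha_2$, so $d(x, \alpha_2) \leq 3N(3,0)$. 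The symmetric argument handles $\alpha_2$, and the triangle is $4N(3,0)$-slim. The main obstacle is the quasi-geodesic verification in the middle paragraph; the factor $3$ is tight for the closest-point estimate at $z$, and everything else is bookkeeping.
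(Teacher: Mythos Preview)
Your proof is correct and follows essentially the same approach as the paper: split $\gamma$ at its closest point to $p$, join by a geodesic to $p$, verify the two resulting concatenations are $(3,0)$-quasi-geodesics, and then use the Morse property together with Lemma~\ref{near morse means hausdorff close}. Your $(3,0)$ verification is phrased via a case split rather than the paper's single inequality chain, and in the last step you invoke the Morse bound $N(3,0)$ directly on $\mu_2$ (rather than the Hausdorff bound $2N(3,0)$), which in fact gives $3N(3,0)$ on the $\alpha_i$ sides --- a harmless improvement over the stated $4N(3,0)$.
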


\begin{proof}Choose $x$ so that it is the nearest point on $\gamma$ to $p$ and let $\eta$ be a geodesic connecting $x$ and $p$. We first show that the concatenation $\phi_1=\gamma([0, x]) \cup \eta$ and $\phi_2=\bar{\eta} \cup \gamma([x,C])$ are $(3,0)$-quasi-geodesics. 

We show that $\phi_1$ is a (3,0)-quasi-geodesic. Since all of the segments are geodesics, we only need check the inequality for points $u$ and $v$ on different segments. Let $u \in \gamma([0, x])$ and $v \in \eta$. 
We know $d(v, \gamma(x)) \leq d(u,v)$ because $\gamma(x)$ is a nearest point to $p$. We also note that $d(u, \gamma(x)) \leq d(u,v) + d(v, \gamma(x))$ by the triangle inequality.  Let $d_{\phi_1}(u,v)$ denote the distance along $\phi_1$ between $u$ and $v$.
\begin{align*}
d(u,v) \leq d_{\phi_1}(u,v) =& d(u, \gamma(x)) + d(\gamma(x), v) \\
				\leq&  (d(u,v)+d(v, \gamma(x)) + d(u,v) \\
				\leq & 3d(u,v).
\end{align*} Thus we have our inequality.

The inequality for $\phi_2$ follows identically. Since the $\alpha_i$ are $N$-Morse and $\phi_i$ is a $(3,0)$-quasi-geodesics with endpoints on $\alpha_i$, it follows that $\gamma \subset \mathcal{N}_{N(3,0)}(\alpha_1 \cup \alpha_2)$. 

Using Lemma \ref{near morse means hausdorff close}, we know that the Hausdorff distance between $\phi_i$ and $\alpha_i$ is less than $2N(3,0)$ for $i=1,2$. Thus for every $t \in [0,A]$ either $d(\alpha_1(t), \gamma)< 4N(3,0)$ or $d(\alpha_1(t), \alpha_2)<4N(3,0)$. So $\alpha_1 \subset \mathcal{N}_{4N(3,0)}(\alpha_2 \cup \gamma)$. The final containment follows identically.
\end{proof}

Armed with the knowledge that triangles with two Morse edges are slim, we show that the third edge is also Morse. 

\begin{lem} \label{lem:third edge N'} Let $X$ be a geodesic space and let $\alpha_1 \colon [0, A] \to X$ and $\alpha_2 \colon [0,B] \to Y$ be $N$-Morse geodesics such that $\alpha_1(0)=\alpha_2(0)=p$. Let $\gamma \colon [0,C] \to X$ be a geodesic joining $\alpha_1(A)$ and $\alpha_2(B)$. Then $\gamma$ is $N'$-Morse for some $N'$ depending on $N$. \end{lem}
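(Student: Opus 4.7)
My plan is to reduce the Morse-ness of $\gamma$ to the Morse-ness of $\alpha_1$ and $\alpha_2$ using Lemma~\ref{near morse means hausdorff close} and Lemma~\ref{lem:triangle slim}. Let $\sigma \colon [0,L] \to X$ be a $(\lambda,\epsilon)$-quasi-geodesic with endpoints $u = \gamma(r_1), v = \gamma(r_2)$, which we may assume continuous (by Lemma~1.11 of \cite{bridson:1999fj}, adjusting constants). Set $D = N(3,0)$. By Lemma~\ref{lem:triangle slim}, $\gamma \subset \Ncal_D(\alpha_1 \cup \alpha_2)$, so we can choose $\hat u, \hat v \in \alpha_1 \cup \alpha_2$ with $d(u,\hat u), d(v,\hat v) \le D$ and form the continuous paths
\[
\tilde\sigma = [\hat u, u] \cdot \sigma \cdot [v, \hat v], \qquad \tilde\gamma = [\hat u, u] \cdot \gamma|_{[r_1,r_2]} \cdot [v, \hat v],
\]
which are quasi-geodesics from $\hat u$ to $\hat v$ with constants depending only on $\lambda, \epsilon, N$. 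I then split into two cases depending on whether $\hat u, \hat v$ lie on the same Morse geodesic.

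\emph{Case A:} $\hat u, \hat v$ both lie on $\alpha_i$ for the same $i$, say $\alpha_1$. Then both $\tilde\sigma$ and $\tilde\gamma$ have endpoints on the Morse geodesic $\alpha_1$; by the Morse property and Lemma~\ref{near morse means hausdorff close}, each is within bounded Hausdorff distance of the sub-segment of $\alpha_1$ between $\hat u$ and $\hat v$. The triangle inequality for Hausdorff distance, plus the bound $D$ on the short bridging segments, yields $\sigma \subset \Ncal_{C_A}(\gamma)$ for some $C_A = C_A(N, \lambda, \epsilon)$.

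\emph{Case B:} $\hat u \in \alpha_1$ and $\hat v \in \alpha_2$ (or vice versa). Let $w^* \in \tilde\sigma$ and $x^* \in \tilde\gamma$ be nearest points to $p$. Mimicking the proof of Lemma~\ref{lem:triangle slim}, I claim $\tilde\sigma|_{[\hat u, w^*]} \cdot [w^*, p]$ is a $(2\lambda+1, \epsilon')$-quasi-geodesic from $\hat u \in \alpha_1$ to $p \in \alpha_1$; the key estimate $d(y, w^*) \le d(x,y)$ for $x \in \tilde\sigma$ and $y$ on the geodesic from $w^*$ to $p$ uses only that $w^*$ minimises distance to $p$ on $\tilde\sigma$. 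By Morse-ness of $\alpha_1$ and Lemma~\ref{near morse means hausdorff close}, $\tilde\sigma|_{[\hat u, w^*]}$ lies within bounded Hausdorff distance of $\alpha_1|_{[0,s]}$, where $\hat u = \alpha_1(s)$; the same construction applied to $\tilde\gamma$ places $\tilde\gamma|_{[\hat u, x^*]}$ within bounded Hausdorff distance of the same segment. Hence $\tilde\sigma|_{[\hat u, w^*]}$ and $\tilde\gamma|_{[\hat u, x^*]}$ are within bounded Hausdorff distance of each other. A symmetric argument using $\alpha_2$ treats $\tilde\sigma|_{[w^*, \hat v]}$ and $\tilde\gamma|_{[x^*, \hat v]}$. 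Combining the two halves, $\sigma \subset \Ncal_{C_B}(\gamma)$ for some $C_B = C_B(N,\lambda,\epsilon)$.

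The main technical hurdle is Case B: verifying the $(2\lambda+1,\epsilon')$-quasi-geodesic estimate and the careful bookkeeping of constants to confirm that every bound depends only on $N$ and the quasi-geodesic constants of $\sigma$. Qualitatively the picture is clean: $\gamma$ and any quasi-geodesic $\sigma$ with endpoints on $\gamma$ both decompose into two halves that fellow-travel $\alpha_1$ and $\alpha_2$ respectively via their nearest-to-$p$ points, so they fellow-travel each other.
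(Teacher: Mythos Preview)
Your overall strategy---split $\sigma$ into two halves via a nearest-point-to-$p$ projection and control each half against the corresponding $\alpha_i$---is the same spirit as the paper's, and Case~A is fine. But Case~B has a genuine gap, and it is not the constants bookkeeping you flag.

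You assert that $\tilde\sigma|_{[\hat u, w^*]}$ is within bounded \emph{Hausdorff} distance of $\alpha_1|_{[0,s]}$. What Lemma~\ref{near morse means hausdorff close} actually gives is that the \emph{full} path $\tilde\sigma|_{[\hat u, w^*]}\cdot[w^*,p]$ is Hausdorff close to $\alpha_1|_{[0,s]}$; for the sub-path $\tilde\sigma|_{[\hat u, w^*]}$ you only get the one-sided containment $\tilde\sigma|_{[\hat u, w^*]}\subset\Ncal_K(\alpha_1|_{[0,s]})$. The same applies to $\tilde\gamma|_{[\hat u, x^*]}$. Chasing the containments then yields only $\tilde\sigma|_{[\hat u,w^*]}\subset\Ncal_{2K}\bigl(\tilde\gamma|_{[\hat u,x^*]}\cup[x^*,p]\bigr)$, and the bridge $[x^*,p]$ can be long (it has length $d(p,\tilde\gamma)$, which is not bounded in terms of $N$). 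So a point $q\in\tilde\sigma$ could a priori be $2K$-close only to a point deep on $[x^*,p]$, far from $\tilde\gamma$ and hence far from $\gamma$. To close this gap you would need an additional argument, e.g.\ bounding $|d(w^*,p)-d(x^*,p)|$ uniformly, which does not follow from what you have written.

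The paper avoids this issue by reversing the order of operations: it first uses slimness to find a single point $\gamma(x)$ close to both $\alpha_1$ and $\alpha_2$, shows (via Lemma~\ref{near morse means hausdorff close} and \cite[Lemma~2.5(1)]{charney:2015aa}) that the two halves $\gamma[0,x]$ and $\gamma[x,C]$ are themselves $N''$-Morse, and only then splits $\sigma$ at its nearest point to $\gamma(x)$. The resulting $(2\lambda+1,\epsilon)$-quasi-geodesics $\sigma_1,\sigma_2$ have endpoints lying on $\gamma[0,x]$ and $\gamma[x,C]$ respectively, so their Morse property immediately gives $\sigma\subset\Ncal(\gamma)$ with no stray bridge segment. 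Splitting relative to $\gamma(x)$ rather than $p$ is exactly what makes the containment land on $\gamma$ instead of on $\gamma\cup[\text{something long}]$.
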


\begin{proof}By Lemma \ref{lem:triangle slim}, we know that the triangle $\alpha_1 \cup \gamma \cup \alpha_2$ is $4N(3,0)$-slim. It follows from the continuity of the distance function that there exists an $x\in [0,C]$ and $s_i\in [0, \infty)$ such that $d(\gamma(x), \alpha_i(s_i))<4N(3,0)$ for $i=1,2$. We define $\gamma_1$ to be the concatenation of $\gamma[0,x]$ and a geodesic between $\gamma(x)$ and $\alpha_1(s_1)$. Define $\gamma_2$ similarly with $\alpha_2$. We note that these are $(1,4N(3,0))$-quasi-geodesics with endpoints on $N$-Morse geodesics. By Lemma \ref{near morse means hausdorff close} we know that the Hausdorff distance between $\gamma_1$ and $\alpha_1[s_1,A]$ is bounded by $2N(1,4N(3,0))$ and thus the Hausdorff distance between $\gamma[0,x]$ and $\alpha_1[s_1,A]$ is bounded by $2N(1,4N(3,0))+4N(3,0)$. We get identical bounds for $\gamma[x,C]$ and $\alpha_2[s_2,B]$. By Lemma 2.5 (1) of \cite{charney:2015aa} they are $N''$-Morse where $N''$ depends on $N$.

Let $\sigma \colon [a,b] \to X$ be a $(\lambda, \epsilon)$ quasi-geodesic with endpoints on $\gamma$.  Let $z \in [a,b]$ be such that $\sigma(z)$ is a point on $\sigma$ closest to $\gamma(x)$. Define $\sigma_1$ to be the concatenation of $\sigma[a,z]$ and $[\sigma(z), \gamma(x)]$, a geodesic between $\gamma(x)$ and $\sigma(z)$. Define $\sigma_2$ similarly. We claim that $\sigma _1$ is a $(2\lambda+1,\epsilon)$-quasi-geodesic


It is enough to check the inequality for points $u$ and $v$ on different segments. Let $u=\sigma(t)$ for some $t \in [a,z]$ and $v \in [\sigma(z), \gamma(x)]$. We know that $d(v, \sigma(z))\leq d(u,v)$ because $\sigma(z)$ is a nearest point to $\gamma(x)$. We also note that $d(u, \sigma(z)) \leq d(u,v) + d(\sigma(z),v)$ by the triangle inequality.  Finally, note that $\sigma_1$ naturally parametrized and the difference in the parameters for $u$ and $v$ is $|z-t|+ d(\sigma(z), v)$. Putting this information together we get the two following inequalities:


\begin{align*}
d(u,v) &\leq d(\sigma(t), \sigma(z))+ d(\sigma(z),v) \\
	&\leq  \lambda |z-t| + \epsilon +d(\sigma(z),v) \\
	&\leq \lambda (|z-t| +d(\sigma(z),v)) +\epsilon \\
	&\text{and} \\
|z-t|+ d(\sigma(z), v) &\leq \lambda d(u, \sigma(z)) + \epsilon + d(\sigma(z), v) \\
	&\leq \lambda(d(u,v) + d(\sigma(z),v))+\epsilon +d(\sigma(z), v) \\
	&\leq \lambda(d(u,v)+ d(u,v)) + \epsilon + d(u,v) \\
	&\leq (2\lambda +1)d(u,v)+ \epsilon.				
\end{align*} Thus we have our inequalities.

The inequality for $\sigma_2$ follows identically. Thus since $\gamma[0,x]$ and $\gamma[x, C]$ are $N''$-Morse, then $\sigma \subset \mathcal{N}_{N''(2\lambda+1, \epsilon)}(\gamma)$. Thus $\gamma$ is $N'$-Morse for some $N'$ depending on $N$.
\end{proof}

The following proposition and its corollaries are the key lemmas used to construct the Morse boundary. We show that if a geodesic is bounded distance from a $N$-Morse geodesic for a long enough time, then they are close where the bound depends only on $N$ and the distance between their basepoints. Thus, geodesics with the same basepoint are actually uniformly close.

\begin{prop} \label{morse close} Let $X$ be a geodesic metric space. Let $\alpha \colon [0, \infty) \rightarrow X$ be an $N$-Morse geodesic ray. Let $\beta \colon [0, \infty) \rightarrow X$ be a geodesic ray such that $d(\alpha(t), \beta(t))<K$ for $t \in [A, A+D]$ for some $A \in [0, \infty)$ and $D \geq 6K$. Then for all $t \in [A+2K, A+D-2K]$, $d(\alpha(t), \beta(t))< 4N(1,2N(5,0)) + 2N(5,0)+ d(\alpha(0),\beta(0))$.
\end{prop}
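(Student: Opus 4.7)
The plan is to establish the bound in two quasi-geodesic stages, each invoking Lemma~\ref{near morse means hausdorff close}, followed by a triangle-inequality conversion from Hausdorff to parametric distance. The two appearances of $(5,0)$ and $(1,2N(5,0))$ in the stated bound correspond to two successive applications of the Morse property of $\alpha$, refining the control on $\beta$ near $\alpha$ each time.

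For Stage~1, I would construct a continuous $(5,0)$-quasi-geodesic $\sigma$ with endpoints on $\alpha$ by concatenating the geodesic $[\alpha(A),\beta(A)]$ (length $<K$), the sub-geodesic $\beta|_{[A,A+D]}$, and the geodesic $[\beta(A+D),\alpha(A+D)]$ (length $<K$). The hypothesis $D\geq 6K$ is precisely what makes $\sigma$ a $(5,0)$-quasi-geodesic: the middle $\beta$-segment dominates the two short connectors, keeping the multiplicative ratio along $\sigma$ at most $5$, including near the corner junctions where the connectors meet $\beta$. Applying Lemma~\ref{near morse means hausdorff close} to the $N$-Morse segment $\alpha|_{[A,A+D]}$ and $\sigma$ then yields a Hausdorff bound of $2N(5,0)$; in particular $\beta|_{[A,A+D]} \subset \mathcal{N}_{N(5,0)}(\alpha)$, a bound which no longer depends on the possibly very large $K$.

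For Stage~2, I would extend $\sigma$ to a longer quasi-geodesic $\tau = \alpha|_{[0,A]} \cup \sigma \cup \alpha|_{[A+D,\infty)}$ starting at $\alpha(0)$. The Stage~1 closeness upgrades the estimate: $\tau$ is now a $(1, 2N(5,0))$-quasi-geodesic with endpoints on $\alpha$, so Lemma~\ref{near morse means hausdorff close} gives a Hausdorff bound of $2N(1,2N(5,0))$ between $\tau$ and the corresponding segment of $\alpha$. Combining the two stages, for $t \in [A+2K, A+D-2K]$ the point $\beta(t)$ lies on $\tau$, hence within $H := 2N(1,2N(5,0)) + N(5,0)$ of some $\alpha(t^*)$. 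Since $\alpha$ and $\beta$ are unit-speed geodesic rays with basepoints at distance $d_0 = d(\alpha(0),\beta(0))$, comparing $d(\alpha(0),\beta(t))$ to both $t$ and $t^*$ via the triangle inequality forces $|t-t^*| \leq d_0 + H$, giving
\[ d(\alpha(t),\beta(t)) \leq |t-t^*| + d(\alpha(t^*),\beta(t)) \leq 2H + d_0 = 4N(1,2N(5,0)) + 2N(5,0) + d(\alpha(0),\beta(0)), \]
which is exactly the stated bound. The main obstacle is the $(5,0)$ verification in Stage~1: checking the multiplicative quasi-geodesic inequality at the corners demands careful use of $D \geq 6K$ to ensure the path length never exceeds five times the direct distance, since in an arbitrary geodesic metric space one cannot rule out shortcuts that would otherwise blow up the ratio at the junctions.
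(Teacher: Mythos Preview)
Your two-stage architecture matches the paper's, but both stages have a genuine gap stemming from the same issue: you connect $\alpha$ to $\beta$ at the \emph{given} parameter values $A$ and $A+D$, whereas the paper connects at \emph{nearest-point projections} of $\alpha(A)$ and $\alpha(A')$ onto $\beta$.

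In Stage~1 your $\sigma$ need not be a $(5,0)$-quasi-geodesic. The condition $D\ge 6K$ only controls the ratio for points on \emph{opposite} connectors (Case~3 in the paper). For a point $u$ on the connector $[\alpha(A),\beta(A)]$ and $v=\beta(t)$ on the middle segment, nothing prevents $d(u,v)$ from being tiny while the path length $d(u,\beta(A))+(t-A)$ stays of order~$K$. Concretely, take $X=\mathbb{R}$, $\alpha(t)=t$, $\beta(t)=t+c$ with $0<c<K$: then $\alpha(A+D)=\beta(A+D-c)$ as points of $X$, yet the $\sigma$-path between them has length $2c$, so $\sigma$ is not $(5,0)$. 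The paper avoids this by taking $\beta(x)$ to be a \emph{nearest} point on $\beta$ to $\alpha(A)$; then for any $v$ on $\beta$ one has $d(u,\beta(x))\le d(u,v)$, which is exactly the inequality driving the $(3,0)$ estimate in Cases~1--2.

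Stage~2 inherits the problem. Your $\tau=\alpha|_{[0,A]}\cup\sigma\cup\alpha|_{[A+D,\infty)}$ still contains the two connectors of length up to $K$, so comparing $\alpha(A)$ with $\alpha(A+D)$ along $\tau$ already gives additive defect $2K$, not $2N(5,0)$; hence $\tau$ is not a $(1,2N(5,0))$-quasi-geodesic when $K>N(5,0)$. The paper's fix is to use the output of Stage~1: once $\beta(x),\beta(x')$ are known to lie within $N(5,0)$ of points $\alpha(y),\alpha(y')$, one builds a \emph{new} path $[\alpha(y),\beta(x)]\cup\beta[x,x']\cup[\beta(x'),\alpha(y')]$ whose connectors have length $<N(5,0)$, and \emph{that} is the $(1,2N(5,0))$-quasi-geodesic. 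Your final Hausdorff-to-parametric conversion is fine and is essentially what the paper cites from Papadopoulos.
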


\begin{proof} Let $A \geq 0$ and $D \geq 6K$ and $A'=A+D$. Choose $x$ so that $\beta(x)$ is a point nearest to $\alpha(A)$ on $\beta$ and similarly choose an $x'$ so that $\beta(x')$ is a nearest point to $\alpha(A')$. Note by the triangle inequality that $x \in [\max \{0, A-2K\}, A+2K]$ and $x' \in [A'-2K, A'+2K]$.

Choose a geodesic $\mu$ from $\alpha(A)$ to $ \beta(x)$ and $\nu$ from $\beta(x')$ to  $\alpha(A')$. We claim that the concatenation of geodesics: \begin{equation*} \phi=\mu \cup \beta([x,x']) \cup \nu \end{equation*} is a $(5,0)$-quasi geodesic. See Figure \ref{key lemma picture}.

Let $u, v \in \phi$. Since all of the segments are geodesics, we only need check when $u$ and $v$ lie on different segments. There are three cases: 

\emph{CASE 1:} $u \in \mu$ and $ v \in  \beta([x,x'])$. Let $\psi = \mu \cup  \beta([x,x'])$. We know that $d(u, \beta(x)) \leq d(u,v)$ because $\beta(x)$ is a nearest point to $\alpha(A)$. We also note that $d(\beta(x), v) \leq d(u,v) + d(u, \beta(x))$ by the triangle inequality.  Let $d_\psi(u,v)$ denote the distance along $\psi$ between $u$ and $v$.
\begin{align*}
d(u,v) \leq d_\psi(u,v) =& d(u, \beta(x)) + d(\beta(x), v) \\
				\leq& d(u,v) + (d(u,v)+d(u, \beta(x)) \\
				\leq& 3d(u,v).
\end{align*} Thus we have our inequality for these two segments.

\emph{CASE 2:} $ u \in \beta([x,x']) $ and $v \in \nu$. This case follows similarly to CASE 1.

\emph{CASE 3:} $ u \in \mu$ and $ v \in \nu$. First note that 
\begin{align*}
2K + d(u,v) \geq& d(\alpha(A), u) + d(u,v) + d(v, \alpha(A')) \\
		\geq& d(\alpha(A), \alpha(A'))=D.
\end{align*}
Thus $d(u,v) \geq D-2K$. Since $D \geq 6K$, $D/3 \geq 2K$. Thus $d(u,v) \geq D-D/3$ or $D \leq \frac{3}{2} d(u,v) $.

We also note that $d(\alpha(A), \beta(x)) < K \leq D/6$ and similarly $d(\beta(x'), \beta(A'))<K \leq D/6$. 

Putting the inequalities together, we see that $d(\beta(x), \beta(x')) \leq D+2K < 2D$. Let $\zeta$ be the geodesic from $u$ to $\beta(x)$ following $\mu$ and let  $\eta$ be the geodesic from $\beta(x')$ following $\nu$. Consider $\xi=\zeta \cup \beta[x,x'] \cup \eta$.
\begin{align*}
d(u,v) \leq d_{\xi}(u,v) =& d(u, \beta(x)) + d(\beta(x), \beta(x')) + d(\beta(x'), v) \\
				<& D/6 + 2D + D/6 \\
				<& 3D \\
				\leq& \frac{9}{2}d(u,v)				 
\end{align*} Thus we have our inequality for these segments.
Therefore we have that $\phi$ is a $(5,0)$-quasi geodesic.

If $K\geq N(5,0)$ then let $\alpha(y)$ and $\alpha(y')$ be the points closest to $\beta(x)$ and $\beta(x')$ respectively. Otherwise let $y=A$ and $y'=A'$. First note that both $d(\beta(x), \alpha(y))<N(5,0)$ and $d(\beta(x'), \alpha(y')) < N(5,0)$. We first claim that $y < A+2K$. If $y=A$ then were done. Else, by the triangle inequality, we get that $y<A+K+N(5,0)<A+2K$. Similarly, we can conclude that $y'>A'-2K$.

Since the path $[\alpha(y),\beta(x)]\cup\beta[x,x']\cup[\beta(x'), \alpha(y')]$ is a $(1, 2N(5,0))$-quasi-geodesic with endpoints on $\alpha$, by Lemma \ref{near morse means hausdorff close} we can conclude the Hausdorff distance between this path and $\alpha([y, y'])$ is bounded by $2N(1,2N(5,0))$. Therefore, it follows that the Hausdorff distance between $\alpha([y,y'])$ and $\beta([x,x'])$ is bounded by $2N(1,2N(5,0)) + N(5,0)$.

To see the parameterized distance bound, we follow the proof of  Proposition 10.1.4 in \cite{papadopoulos:2005qy} and conclude that for all $t \in [A+2K, A+D-2K]$, $d(\alpha(t), \beta(t))< 4N(1,2N(5,0)) + 2N(5,0)+ d(\alpha(0),\beta(0))$.
 \end{proof}

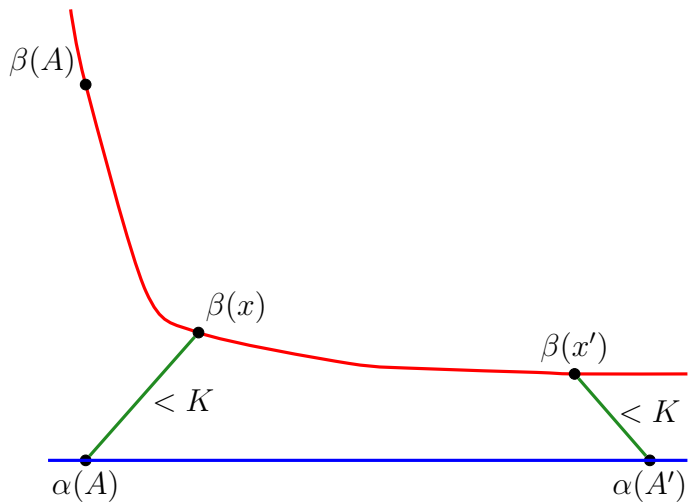
\begin{figure}
\begin{center}
\begin{tikzpicture}[inner sep= .5 mm] [font=\large]

\draw (0,5)coordinate(x) {};
\draw (0,2) coordinate (r) {};
\draw (0,0) coordinate (pigx) {};
\draw (2,3) coordinate (s) {};
\draw (7,4) coordinate (t) {};
\draw (8,5) coordinate (y) {};
\draw (8,3.5) coordinate (u) {};
\draw (8,0) coordinate (pigy) {};

\draw (.8,2.25) coordinate (int1) {};
\draw (1.5,1.7) coordinate (int2) {};

\draw (3.5,1.29) coordinate (int3) {};
\draw (4.5,1.22) coordinate (int4) {};
\draw (6,1.17) coordinate (int5) {};
\draw (6.5, 1.15) coordinate (int6) {};
\draw (8,1.15) coordinate (int7) {};
\draw (-.2, 6) coordinate (bstart) {};

\draw [very thick,color=red] plot[smooth=80] coordinates {(bstart) (x) (int1) (int2)   (int3)  (int4) (int5)  (int6)  (int7) };

\draw (0,5) node [circle,draw,fill=black,label=160:$\beta(A)$] (x) {};
\draw (0,0) node [circle,draw,fill=black,label=below:$\alpha(A)$] (aN) {};
\draw (1.5,1.7) node [circle, draw, fill=black, label=60:$\beta(x)$] (int2) {};
\draw (6.5, 1.15) node [circle, draw, fill=black, label=above:$\beta(x')$] (int6) {};
\draw(-.5, 0) coordinate (astart) {};
\draw(7.5, 0) node [circle, draw, fill=black, label=below:$\alpha(A')$] (aD) {};
\draw (8,0) coordinate  (a) {};

\draw [very thick, color=blue] (astart) -- (a);
\draw[very thick, color=ForestGreen] (aN) -- (int2);
\draw[very thick, color=ForestGreen] (aD) -- (int6);

\draw (1.3,.8) node (k1) {$<K$};
\draw (7.5,.65) node (k2) {$<K$};

\end{tikzpicture}
\end{center}
\caption{Picture of situation in Proposition \ref{morse close}}  \label{key lemma picture}
\end{figure}

{\bf Notation:} For $N=N(K,L)$ a Morse gauge, set $\delta_N=\max\{4N(1,2N(5,0)) + 2N(5,0), 8N(3,0)\}$. 

\begin{cor} \label{morse close same basepoint} Let $X$ be a geodesic metric space and $\alpha \colon [0, \infty) \rightarrow X$ be an $N$-Morse geodesic ray. Let $\beta \colon [0, \infty) \rightarrow X$ be a ray such that $\beta(0)= \alpha(0)$ and $d(\alpha(t), \beta(t))<K$ for $t \in [0, D]$ for some $D \geq 6K$. Then  $d(\alpha(t), \beta(t))<\delta_N$ for all $t \in [0,D-2K]$.
\end{cor}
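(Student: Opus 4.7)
The plan is to split $[0, D-2K]$ into $[2K, D-2K]$ and $[0, 2K]$ and handle each separately. We may assume $K > \delta_N$, since otherwise the hypothesis $d(\alpha(t), \beta(t)) < K \leq \delta_N$ on $[0, D] \supset [0, D-2K]$ already gives the conclusion.

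For the upper subinterval $t \in [2K, D-2K]$, Proposition \ref{morse close} applied with $A = 0$ is immediate: the shared basepoint forces $d(\alpha(0),\beta(0)) = 0$, and the bound reduces to $d(\alpha(t), \beta(t)) < 4N(1, 2N(5,0)) + 2N(5,0) \leq \delta_N$.

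For the lower subinterval $t \in [0, 2K]$, I would construct an auxiliary quasi-geodesic. Set $T = D - 2K \geq 4K$; the previous paragraph gives $d(\alpha(T), \beta(T)) < \delta_N$. Let $\beta(y)$ be a nearest point on $\beta$ to $\alpha(T)$, so $d(\alpha(T), \beta(y)) < \delta_N$, and the reverse triangle inequality applied to the common basepoint $\alpha(0) = \beta(0)$ gives $|y - T| < \delta_N$. Since $K > \delta_N$, this yields $y > 2K$. Letting $\nu$ be the geodesic from $\beta(y)$ to $\alpha(T)$, form $\rho = \beta([0,y]) \cup \nu$, a path from $\alpha(0)$ to $\alpha(T)$ whose endpoints both lie on $\alpha$. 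The nearest-point choice of $\beta(y)$ is precisely what allows the Case~1 estimate from the proof of Proposition \ref{morse close} to be rerun verbatim, showing that $\rho$ is a $(3, 0)$-quasi-geodesic. Since $\alpha$ is $N$-Morse, we conclude $\rho \subset \mathcal{N}_{N(3, 0)}(\alpha)$, and in particular $\beta([0, y]) \subset \mathcal{N}_{N(3, 0)}(\alpha)$.

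The concluding step is a parameterized closeness conversion. For $t \in [0, 2K] \subset [0, y]$, pick $s_t$ with $d(\beta(t), \alpha(s_t)) < N(3, 0)$; the reverse triangle inequality at the common basepoint yields $|s_t - t| = \bigl| d(\alpha(0), \alpha(s_t)) - d(\beta(0), \beta(t)) \bigr| \leq d(\alpha(s_t), \beta(t)) < N(3, 0)$, and therefore $d(\alpha(t), \beta(t)) \leq d(\alpha(t), \alpha(s_t)) + d(\alpha(s_t), \beta(t)) < 2N(3, 0) \leq \delta_N$. The main obstacle is the construction of $\rho$ and the verification that it is a $(3,0)$-quasi-geodesic; once that is in place, $N$-Morseness of $\alpha$ together with the shared-basepoint reverse triangle inequality yields a uniform bound $\delta_N$ independent of $K$.
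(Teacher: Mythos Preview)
Your proof is correct but takes a different route from the paper's. The paper's proof is a one-liner: ``Follow the proof of Proposition~\ref{morse close} in this case.'' The point is that when $A=0$ and $\alpha(0)=\beta(0)$, the nearest point $\beta(x)$ to $\alpha(A)=\alpha(0)$ is $\beta(0)$ itself, so $x=0$ and hence also $y=0$; the Hausdorff bound obtained in that proof then applies to $\alpha([0,y'])$ and $\beta([0,x'])$, and the parametrized distance estimate extends all the way down to $t=0$ rather than only to $t=2K$. You instead invoke Proposition~\ref{morse close} as a black box on $[2K,D-2K]$ and treat $[0,2K]$ separately via an auxiliary $(3,0)$-quasi-geodesic and the reverse triangle inequality at the shared basepoint. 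Your approach is longer but has the virtue of being self-contained: it does not require the reader to re-enter the proof of the proposition and trace where the lower limit $A+2K$ comes from. The paper's approach is more economical but leaves that tracing to the reader.
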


\begin{proof} Follow the proof of Proposition \ref{morse close} in this case.
\end{proof}

\begin{cor} \label{asymp morse eventually close} 
Let $X$ be a geodesic metric space and $\alpha \colon [0, \infty) \rightarrow X$ be an $N$-Morse geodesic ray. Let $\beta \colon [0, \infty) \rightarrow X$ be a ray such that $d(\alpha(t), \beta(t))<K$ for all $t \in [0, \infty)$ (i.e., $\beta \in [\alpha]$). Then for all $t \in [2K, \infty)$, $ d(\alpha(t), \beta(t)) <\delta_N +d(\alpha(0), \beta(0))  $. In particular if $\alpha(0)=\beta(0)$, then $d(\alpha(t), \beta(t)) < \delta_N$ for all $t \in [0, \infty)$.
\end{cor}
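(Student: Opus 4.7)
The plan is to deduce this corollary directly by applying Proposition \ref{morse close} (for the first assertion) and Corollary \ref{morse close same basepoint} (for the special case), with the interval $[A, A+D]$ chosen to exhaust $[0,\infty)$.

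First I would set $A=0$ in Proposition \ref{morse close}. Since by hypothesis $d(\alpha(t),\beta(t))<K$ holds for \emph{every} $t\in[0,\infty)$, we may take $D$ arbitrarily large (certainly $D\geq 6K$), and the proposition yields
\[
d(\alpha(t),\beta(t)) < 4N(1,2N(5,0)) + 2N(5,0) + d(\alpha(0),\beta(0))
\]
for all $t\in[2K, D-2K]$. Letting $D\to\infty$ extends this bound to all $t\in[2K,\infty)$. By the definition of $\delta_N$ in the notation preceding the corollary, $4N(1,2N(5,0))+2N(5,0)\leq \delta_N$, so the first claim follows.

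For the special case $\alpha(0)=\beta(0)$, I would invoke Corollary \ref{morse close same basepoint} instead, again with $D$ arbitrarily large. This gives the cleaner bound $d(\alpha(t),\beta(t))<\delta_N$ on $[0, D-2K]$, and letting $D\to\infty$ extends it to all of $[0,\infty)$. Note that in this case the additional summand $d(\alpha(0),\beta(0))=0$, so the bound matches the statement.

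There is essentially no serious obstacle here; the only thing to be mindful of is that the bounds provided by Proposition \ref{morse close} and Corollary \ref{morse close same basepoint} are \emph{uniform} in $D$ (they depend only on $N$, $K$, and $d(\alpha(0),\beta(0))$), which is exactly what permits passing $D\to\infty$. The real content has already been established in the proposition; the corollary just records the asymptotic consequence needed for the boundary construction, where one repeatedly wants to conclude that two asymptotic rays remain uniformly close (with a constant depending only on the Morse gauge and initial offset) past an initial segment of length $2K$.
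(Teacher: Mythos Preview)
Your proposal is correct and matches the paper's approach: the paper's proof is just two sentences, noting that the first claim follows from Proposition \ref{morse close} (since the hypothesis holds on arbitrarily long intervals) and the second from Corollary \ref{morse close same basepoint}. The only cosmetic difference is that the paper phrases the first reduction as ``$A\in[0,\infty)$ is arbitrary'' rather than fixing $A=0$ and letting $D\to\infty$, but these are equivalent since the bound is uniform in $A$ and $D$.
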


\begin{proof} The first statement follows from Proposition \ref{morse close} as $A \in [0, \infty)$ is arbitrary. The second follows from Corollary \ref{morse close same basepoint}.
\end{proof}

The next result is similar in flavor to the preceding results and we will use it in showing that the Morse boundary is a quasi-isometry invariant.

\begin{lem} \label{lem:twodelta} Let $X$ be a geodesic space and let $\alpha_1, \alpha_2 \colon [0,A] \to X$ be $N$-Morse geodesics with $\alpha_1(0)=\alpha_2(0)$. If $d(\alpha_1(s), \mathrm{im}(\alpha_2))< K$, for some $K>0$ and $s \in [0,A]$, then $d(\alpha_1(t), \alpha_2(t))\leq 8N(3,0) < \delta_N$ for $t < s-K-4N(3,0)$. \end{lem}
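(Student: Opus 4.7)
The plan is to reduce to the slim-triangle lemma already established as Lemma \ref{lem:triangle slim}.

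First I would pick a point on $\alpha_2$ witnessing the hypothesis: choose $s' \in [0,A]$ with $d(\alpha_1(s), \alpha_2(s')) < K$, and let $\gamma$ be a geodesic from $\alpha_1(s)$ to $\alpha_2(s')$, so that $\mathrm{length}(\gamma) < K$. The triangle with edges $\alpha_1|_{[0,s]}$, $\alpha_2|_{[0,s']}$, and $\gamma$ has two $N$-Morse sides sharing the basepoint $p := \alpha_1(0) = \alpha_2(0)$, so Lemma \ref{lem:triangle slim} applies and the triangle is $4N(3,0)$-slim. Note that by the triangle inequality applied at $p$, we also have $|s-s'|<K$.

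Next, for any $t \in [0,s]$ with $t < s - K - 4N(3,0)$, I would show that $\alpha_1(t)$ cannot be $4N(3,0)$-close to any point of $\gamma$: if $d(\alpha_1(t), w) \le 4N(3,0)$ for some $w \in \gamma$, then since every point of $\gamma$ is within $K$ of $\alpha_1(s)$, we would get $s - t = d(\alpha_1(t), \alpha_1(s)) \le 4N(3,0) + K$, contradicting the hypothesis on $t$. Therefore slimness forces $\alpha_1(t)$ to lie within $4N(3,0)$ of some point $\alpha_2(r)$ on the other Morse edge.

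Finally I would compare parameters using the basepoint. Since $d(p,\alpha_1(t)) = t$ and $d(p, \alpha_2(r)) = r$, the triangle inequality gives $|t - r| \le d(\alpha_1(t), \alpha_2(r)) \le 4N(3,0)$; since $\alpha_2$ is a geodesic, $d(\alpha_2(r), \alpha_2(t)) = |t-r| \le 4N(3,0)$, so
\[
d(\alpha_1(t), \alpha_2(t)) \le d(\alpha_1(t), \alpha_2(r)) + d(\alpha_2(r), \alpha_2(t)) \le 8N(3,0),
\]
and $8N(3,0) \le \delta_N$ is immediate from the definition of $\delta_N$.

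No step should be a serious obstacle: the only thing one has to watch carefully is the bookkeeping to guarantee that $\alpha_1(t)$ is close to a point on $\alpha_2$ rather than on the connecting geodesic $\gamma$, which is exactly what the parameter restriction $t < s - K - 4N(3,0)$ is set up to ensure.
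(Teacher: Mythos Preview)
Your proposal is correct and is exactly the argument the paper intends: the paper's proof simply says to follow Bridson--Haefliger III.H Lemma~1.15, replacing $\delta$-slimness of hyperbolic triangles by the $4N(3,0)$-slimness furnished by Lemma~\ref{lem:triangle slim}. Your write-up carries this out explicitly, including the parameter bookkeeping that rules out proximity to the short side $\gamma$ and the comparison $|t-r|\le 4N(3,0)$ via the common basepoint, so there is nothing to add.
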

\begin{proof} Follow exactly the proof of Lemma 1.15 in  \cite{bridson:1999fj} III.H using the slimness of triangles with two $N$-Morse legs shown in Lemma \ref{lem:triangle slim}. \end{proof}

The next lemma states that given an $N$-Morse geodesic $\gamma$ with basepoint $p$, we can construct a geodesic with basepoint $p'$ asymptotic to $\gamma$ that is $N'$-Morse where $N'$ depends only on $N$ and $d(p,p')$. This is important in showing basepoint independence of the Morse boundary.

\begin{lem} \label{basepoint change} Let $X$ be a proper geodesic metric space, $p,p'\in X$ and $\alpha \colon [0, \infty) \to X$ an $N$-Morse geodesic ray such that $\alpha(0)=p$. Then there exists an $N'$- Morse geodesic ray $\beta \colon [0, \infty) \to X$ asymptotic to $\alpha$ with $\beta(0)=p'$, $N \leq N'$ (where $N'$ depends only on $d(p,p')$ and $N$), and $d(\alpha(t), \beta(t))< 4N(1,2d(p,p')) + 3d(p,p')$ for all $t \in [0, \infty)$. \end{lem}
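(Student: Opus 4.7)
The strategy is the standard one for passing a Morse ray from one basepoint to another: approximate by geodesic segments ending at $\alpha(n)$, extract a limit, and control distances via the concatenation with $[p,p']$.

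For each $n \in \mathbb{N}$, let $\gamma_n \colon [0, L_n] \to X$ be a geodesic from $p'$ to $\alpha(n)$, where $L_n = d(p',\alpha(n))$. Since $d(p',\alpha(n)) \geq d(p,\alpha(n)) - d(p,p') = n - d(p,p')$, we have $L_n \to \infty$. By the Arzel\`a--Ascoli corollary (Corollary 1.3 in the excerpt), some subsequence of $(\gamma_n)$ converges uniformly on compact sets to a geodesic ray $\beta \colon [0, \infty) \to X$ with $\beta(0) = p'$.

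Set $D := d(p,p')$. I would next show that $\alpha$ and $\beta$ are at bounded Hausdorff distance. Consider the concatenation $\phi_n$ of the geodesic $[p,p']$ with $\gamma_n$; this is a continuous path from $p$ to $\alpha(n)$ of length $D + L_n$. Since $L_n \leq D + d(p,\alpha(n))$ by the triangle inequality, $\phi_n$ is a $(1, 2D)$-quasi-geodesic with endpoints on $\alpha$. By Lemma \ref{near morse means hausdorff close} (the continuous case), the Hausdorff distance between $\phi_n$ and $\alpha([0,n])$ is at most $2N(1,2D)$. Since every point of the subsegment $[p,p'] \subset \phi_n$ lies within $D$ of $p' = \gamma_n(0) \in \gamma_n$, it follows that the Hausdorff distance between $\gamma_n$ and $\alpha([0,n])$ is at most $2N(1,2D) + D =: K$. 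Taking the limit along the convergent subsequence (using that $\gamma_{n_k}(t)$ lies in a bounded set whenever $\alpha$ forces it to), one concludes that the Hausdorff distance between $\beta$ and $\alpha$ is at most $K$.

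From the Hausdorff bound I would derive the parameterized bound by a simple triangle-inequality argument: for any $t \geq 0$, pick $s$ with $d(\alpha(t), \beta(s)) \leq K$; then $|s - t| = |d(p', \beta(s)) - d(p,\alpha(t))| \leq D + K$, and hence
\[
d(\alpha(t), \beta(t)) \;\leq\; d(\alpha(t), \beta(s)) + d(\beta(s), \beta(t)) \;\leq\; K + (K + D) \;=\; 4N(1, 2D) + 3D,
\]
as desired. Finally, since $\beta$ is at bounded Hausdorff distance from the $N$-Morse ray $\alpha$, Lemma~2.5 (1) of \cite{charney:2015aa} (applied in the geodesic setting, as remarked earlier) implies that $\beta$ is $N'$-Morse for some $N'$ depending only on $N$ and the Hausdorff constant, i.e.\ on $N$ and $D$. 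The main obstacle is a bookkeeping one, namely carefully passing from the Hausdorff estimate for the finite segments $\gamma_n$ to the limit ray $\beta$ (controlling where the closest-point projections land so that they live in a compact set); this is routine given the Arzel\`a--Ascoli setup and properness of $X$.
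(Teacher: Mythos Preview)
Your proof is correct and follows essentially the same route as the paper: the paper likewise takes geodesics $\beta_n$ from $p'$ to $\alpha(n)$, concatenates with a geodesic $[p,p']$ to get a $(1,2d(p,p'))$-quasi-geodesic with endpoints on $\alpha$, applies Lemma~\ref{near morse means hausdorff close} for the Hausdorff bound $2N(1,2d(p,p'))+d(p,p')$, extracts a limit by Arzel\`a--Ascoli, and invokes Lemma~2.5(1) of \cite{charney:2015aa} for the Morse conclusion. The only cosmetic difference is that the paper cites Proposition~10.1.4 of \cite{papadopoulos:2005qy} for the parameterized bound, whereas you write out the (same) triangle-inequality computation explicitly.
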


\begin{proof} Let $\{\beta_n\}_{n \in \N}$ be a sequence of geodesics joining $p'$ and $\alpha(n)$. Let $\bar{\beta_0}$ be $\beta_0$ with opposite parameterization. We note that the concatenation $\phi_n=\bar{\beta_0} \cup \beta_n$ is a $(1, 2d(p,p'))$-quasi-geodesic with endpoints on $\alpha$ and thus by Lemma \ref{near morse means hausdorff close} we know $\phi_n$ is  Hausdorff distance at most $2N(1, 2d(p,p'))$ away from $\alpha$. By Arzel\`a-Ascoli a subsequence $\{ \beta_{n(i)} \}$ converges uniformly on compact sets to a ray $\beta$. Since all of the $\beta_n$ are at Hausdorff distance at most $2N(1, 2d(p,p'))+d(p,p')$ away from $\alpha$, the Hausdorff distance between $\alpha$ and $\beta$ is identically bounded.  It now follows from Lemma 2.5 (1) in \cite{charney:2015aa} that $\beta$ is an $N'$-Morse ray where $N'$ depends only on $N$ and $d(p,p')$. To see the parameterized distance bound, we use Proposition 10.1.4 in \cite{papadopoulos:2005qy}. 
\end{proof}

To show the Morse boundary is a quasi-isometry invariant, we show that under a quasi-isometry $N$-Morse geodesic rays are sent to quasi-geodesics rays near $N'$-Morse geodesic rays where $N'$ depends only on the quasi-isometry constants and $N$.  We will use this lemma to show that quasi-isometries induce maps on the Morse boundary.

\begin{lem} \label{straighten quasi-geodesics} Let $X$ and $Y$ be proper geodesic spaces and let $f \colon X \to Y$ be a $(\lambda, \epsilon)$-quasi-isometry. Then for any $N$-Morse geodesic ray $\gamma$ based at $p$, $f \circ \gamma$ stays bounded distance from an $N'$-Morse geodesic ray $\beta$ based at $f(p)$ where $N'$ depends only on $\lambda, \epsilon$, and $N$. \end{lem}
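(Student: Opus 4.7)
The plan is to first verify that $f\circ\gamma$, which is only a $(\lambda,\epsilon)$-quasi-geodesic ray, is itself Morse with some gauge $N''$ depending only on $\lambda,\epsilon,N$; then to construct $\beta$ as an Arzel\`a-Ascoli limit of geodesic segments from $f(p)$ to $f(\gamma(n))$; and finally to transfer the Morse property from $f\circ\gamma$ to $\beta$.

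For the first step I would let $g\colon Y\to X$ be a quasi-inverse and consider an arbitrary $(\mu,\nu)$-quasi-geodesic $\sigma$ in $Y$ with endpoints on $f\circ\gamma$. The composition $g\circ\sigma$ is a quasi-geodesic in $X$, with constants depending only on $\mu,\nu,\lambda,\epsilon$, whose endpoints lie within a constant $C=C(\lambda,\epsilon)$ of $\gamma$. Appending short geodesic segments connecting these endpoints to $\gamma$ produces a quasi-geodesic with endpoints on $\gamma$, to which the $N$-Morse property of $\gamma$ applies; pushing forward by $f$ then confines $\sigma$ to a bounded neighborhood of $f\circ\gamma$ of size depending only on $\mu,\nu,\lambda,\epsilon,N$. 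Reading off the dependence gives a Morse gauge $N''$ for the quasi-geodesic $f\circ\gamma$.

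Next I would let $\beta_n$ be a geodesic from $f(p)$ to $f(\gamma(n))$ in $Y$. Lemma \ref{near morse means hausdorff close} applied to the $N''$-Morse quasi-geodesic $f\circ\gamma|_{[0,n]}$ and the geodesic $\beta_n$ (which shares the same endpoints) gives a Hausdorff bound independent of $n$, so the family $\{\beta_n\}$ is uniformly bounded on each compact subinterval of $[0,\infty)$. Arzel\`a-Ascoli then extracts a subsequential limit $\beta\colon[0,\infty)\to Y$, a geodesic ray based at $f(p)$, which inherits the same Hausdorff bound with respect to $f\circ\gamma$. To conclude that $\beta$ is $N'$-Morse I would appeal to Lemma 2.5 (1) of \cite{charney:2015aa} (already invoked in Lemma \ref{basepoint change}): any ray at bounded Hausdorff distance from a Morse (quasi-)geodesic is itself Morse with gauge determined by the original gauge and the Hausdorff constant.

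The main obstacle is the first step, namely tracking the quasi-isometry constants through the pull-back/push-forward argument carefully enough to see that the resulting Morse gauge of $f\circ\gamma$ depends only on $\lambda,\epsilon,N$ and not on any further data of $\gamma$ itself. Once that is in hand, both the limit construction and the final transfer to $\beta$ are essentially packaged applications of Arzel\`a-Ascoli and of the tools already developed earlier in Section 2.
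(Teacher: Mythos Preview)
Your proposal is correct and follows essentially the same route as the paper. The paper packages your first step as a direct citation of Lemma 2.5 (2) of \cite{charney:2015aa}, invokes Lemma 2.5 (3) of \cite{charney:2015aa} for the Hausdorff bound between $\beta_n$ and $f\circ\gamma|_{[0,n]}$ (note that Lemma \ref{near morse means hausdorff close} as stated requires the Morse path to be a genuine geodesic, so Lemma 2.5 (3) is the correct reference here), and then finishes with Arzel\`a-Ascoli and Lemma 2.5 (1) exactly as you do.
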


\begin{proof} We follow closely the proof of Corollary 2.10 in \cite{charney:2015aa}. By Lemma 2.5 (2) in \cite{charney:2015aa} we know that $f \circ \gamma$ is an $N''$-Morse quasi-geodesic with $N''$ only depending on $\lambda, \epsilon$ and $N$. Let $\beta_n$ be a geodesic segment from $f(\gamma(0))$ to $f(\gamma(n))$ for all $n \in \N$. Since $f \circ \gamma$ is a $(\lambda, \epsilon)$-quasi-geodesic, then by Lemma 2.5 (3) in \cite{charney:2015aa} there exists a constant $C$ that depends only on $\lambda, \epsilon$ and $N''$ so that every $\beta_n$ is within Hausdorff distance $C$ of $f \circ \gamma |_{[0,n]}$. Thus by Arzel\`a-Ascoli there exists a subsequence $\beta_{n(i)}$ that converges to a geodesic ray $\beta$ that is at most Hausdorff distance $C$ from $f \circ \gamma$. We use Lemma 2.5 (1) in \cite{charney:2015aa} to conclude that $\beta$ is $N'$-Morse where $N'$ depends only on $N''$, $C$. 
\end{proof}

\begin{lem} \label{M-Morse uniformly converge to M-Morse} Let X be a geodesic space and let $\{\gamma_i \colon [0, \infty) \to X \}$ be a sequence of $N$-Morse geodesic rays that converge uniformly on compact sets to a geodesic ray $\gamma$. Then $\gamma$ is $N$-Morse.
\end{lem}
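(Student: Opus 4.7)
The plan is to fix a $(\lambda,\epsilon)$-quasi-geodesic $\sigma\colon[a,b]\to X$ with endpoints $\sigma(a)=\gamma(s_1)$ and $\sigma(b)=\gamma(s_2)$ on $\gamma$, and show $\sigma\subset\mathcal{N}_{N(\lambda,\epsilon)}(\gamma)$. The idea is to shift $\sigma$'s endpoints onto each $\gamma_i$, apply the $N$-Morse hypothesis for $\gamma_i$, and then pass to the limit using uniform convergence on compact sets.

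For each $i$, define $\sigma_i\colon[a,b]\to X$ by $\sigma_i(a)=\gamma_i(s_1)$, $\sigma_i(b)=\gamma_i(s_2)$, and $\sigma_i(r)=\sigma(r)$ for $r\in(a,b)$. Writing $d_i^j:=d(\gamma_i(s_j),\gamma(s_j))$, uniform convergence on the compact set $[0,\max\{s_1,s_2\}]$ gives $d_i^1,d_i^2\to 0$. A routine case check on pairs $s,t\in[a,b]$ (handling whether either variable equals $a$ or $b$) shows $\sigma_i$ is a $(\lambda,\epsilon+d_i^1+d_i^2)$-quasi-geodesic whose endpoints lie on $\gamma_i$. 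Applying the $N$-Morse hypothesis for $\gamma_i$ gives
\[ \sigma_i \subset \mathcal{N}_{N(\lambda,\epsilon+d_i^1+d_i^2)}(\gamma_i), \]
so every $p\in\sigma$ satisfies $d(p,\gamma_i)\le N(\lambda,\epsilon+d_i^1+d_i^2)$ (the two endpoints of $\sigma$ trivially lie within $d_i^j$ of $\gamma_i$).

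To transfer the bound to $\gamma$, observe that each $p\in\sigma$ lies within $\lambda(b-a)+\epsilon$ of $\gamma(s_1)$, so the nearest point of $\gamma_i$ to $p$ is contained in a common compact sub-ray $\gamma_i|_{[0,T]}$ for every $i$ large. Uniform convergence of $\gamma_i\to\gamma$ on $[0,T]$ then supplies $\eta_i\to 0$ with $d(p,\gamma)\le d(p,\gamma_i)+\eta_i\le N(\lambda,\epsilon+d_i^1+d_i^2)+\eta_i$. Taking $i\to\infty$ — and invoking, without loss of generality, upper semicontinuity of $N$ in its second variable (a harmless normalization, since the class of $(\lambda,\epsilon)$-quasi-geodesics is monotone in $\epsilon$, so $N$ may be replaced by its right-continuous upper envelope) — yields $d(p,\gamma)\le N(\lambda,\epsilon)$. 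The main obstacle is precisely this last step: absorbing the extra slack $d_i^1+d_i^2$ that appears in the second argument of $N$; the quasi-geodesic inequalities for $\sigma_i$ and the compactness argument used to pass neighborhoods from $\gamma_i$ to $\gamma$ are otherwise entirely routine.
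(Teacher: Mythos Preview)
Your proof is correct and follows the same approach as the paper: perturb the quasi-geodesic's endpoints onto $\gamma_i$, apply the $N$-Morse property of $\gamma_i$, and pass to the limit as $i\to\infty$ using uniform convergence on compact sets. Your version is considerably more detailed than the paper's three-sentence argument, and your explicit remark about normalizing $N$ via monotonicity of the quasi-geodesic classes addresses a regularity issue that the paper's final step (``$N'<N+\eta$ for all $\eta>0$, hence $N$-Morse'') leaves implicit.
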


\begin{proof} Let $\eta > 0$. Let $\beta$ be a $(\lambda, \epsilon)$-quasi geodesic with end points on $\gamma$. Since the $\gamma_i$ converge uniformly on compact sets and are $N$- Morse there exists an $I \in \N$ such that for any $i \geq I$, $d(\gamma_i(t), \gamma(t))< \eta$ on the interval between the endpoints of $\beta$. Thus $\gamma$ is an $N'$-Morse geodesic where $N' < N+\eta$. Since this is true for all $\eta >0$, $\gamma$ is an $N$-Morse geodesic. 
\end{proof}

\section{The Morse Boundary} \label{sec:morse boundary defn}

As a set, the Morse boundary of $X$ with basepoint $p$, $\bm X_p$, is the collection all Morse geodesic rays in $X$ with basepoint $p$ where two geodesic rays $\gamma, \gamma' \colon [0, \infty) \to X$ are identified if there exists a constant $K$ such that $d(\gamma(t), \gamma'(t))< K$ for all $t>0$. We denote an equivalence class of a ray $\alpha\in \bm X$ by $[\alpha]$.

In order to topologize the entire boundary, we first topologize pieces of the boundary and take a direct limit. Consider the subset of the Morse boundary  \begin{equation*} \Bm{} X_p= \{[\alpha] \mid \exists \beta \in [\alpha] \text{ that is an $N$-Morse geodesic ray with } \beta(0)=p\}. \end{equation*}

We toplogize $\Bm{} X_p$ following \cite{bridson:1999fj} III.H. Let $X$ be a proper geodesic space. Fix a basepoint $p \in X$. We define convergence in $\Bm{} X_p$ by: $x_n \to x$ as $n \to \infty$ if and only if there exists $N$-Morse geodesic rays $\alpha_n$ with $\alpha_n(0)=p$ and $[\alpha_n]= x_n$ such that every subsequence of $\{\alpha_n\}$ contains a subsequence that converges uniformly on compact sets to a geodesic ray $\alpha$ with $[\alpha]=x$. By Lemma \ref{M-Morse uniformly converge to M-Morse}, we have a well-defined topology on $\Bm{} X_p$: the closed subsets $B \subset \Bm{} X_p$ are those satisfying the condition $$[x_n \in B, \forall n > 0 \text{ and } x_n \to x] \Rightarrow x \in B.$$

We show this topology is equivalent to a system of neighborhoods at a point in $\Bm{} X_p$.

\begin{lem} \label{lem:nbhds} Let $X$ be a proper geodesic space and $p \in X$. Let $\alpha \colon [0, \infty) \rightarrow X$ be an $N$-Morse geodesic ray with $\alpha(0)=p$ and for each positive integer $n$ let $V_n( \alpha)$ be the set of geodesics rays $\gamma$ such that $\gamma(0)= p$ and $d(\alpha(t), \gamma(t))< \delta_N$ for all $t<n$. Then $\{V_n(\alpha) \mid n \in \N \}$ is a fundamental system of (not necessarily open)  neighborhoods of $[\alpha]$ in $\Bm{} X_p$.
\end{lem}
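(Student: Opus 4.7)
The plan is to show that the system $\{V_n(\alpha)\}$ generates the same notion of convergence as the sequential topology already defined on $\Bm{} X_p$. Concretely, I will verify the two implications: (a) if $x_k \to [\alpha]$ in the sequential topology then for every $n$ the sequence $x_k$ is eventually in $V_n(\alpha)$, and (b) if for every $n$ the sequence $x_k$ is eventually in $V_n(\alpha)$ then $x_k \to [\alpha]$. Together these characterize $\{V_n(\alpha)\}$ as a fundamental system of neighborhoods of $[\alpha]$. The workhorse throughout is Corollary \ref{asymp morse eventually close}, which forces two same-basepoint $N$-Morse rays lying in the same class to be strictly within $\delta_N$ at every $t$.

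For (a), I would argue by contradiction. Assume $x_k \to [\alpha]$ via $N$-Morse representatives $\alpha_k$ based at $p$, and suppose that infinitely many $x_{k_j} \notin V_n(\alpha)$; since no representative of $x_{k_j}$ based at $p$ can satisfy the defining bound of $V_n(\alpha)$, in particular $\sup_{t<n} d(\alpha(t),\alpha_{k_j}(t)) \geq \delta_N$. By hypothesis this subsequence admits a further subsequence converging uniformly on compact sets to some ray $\alpha'$ with $[\alpha'] = [\alpha]$ and $\alpha'(0) = p$. Corollary \ref{asymp morse eventually close} then gives $d(\alpha(t),\alpha'(t)) < \delta_N$ for every $t$, and compactness of $[0,n]$ yields $M := \sup_{t \in [0,n]} d(\alpha(t),\alpha'(t)) < \delta_N$ strictly. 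Choosing $j$ large enough that uniform convergence on $[0,n]$ provides $d(\alpha_{k_j}(t),\alpha'(t)) < (\delta_N - M)/2$, the triangle inequality produces $d(\alpha(t),\alpha_{k_j}(t)) < \delta_N$ throughout $[0,n]$, contradicting the choice of $k_j$.

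For (b), suppose $x_k$ is eventually in $V_n(\alpha)$ for every $n$. Since the $V_n(\alpha)$ are nested decreasing, a routine diagonal argument produces $m(k) \to \infty$ with $x_k \in V_{m(k)}(\alpha)$ for every $k$; pick corresponding representatives $\gamma_k$ with $\gamma_k(0) = p$ and $d(\alpha(t),\gamma_k(t)) < \delta_N$ for $t < m(k)$. Because $x_k \in \Bm{} X_p$, the class admits an $N$-Morse representative $\beta_k$ based at $p$, and Corollary \ref{asymp morse eventually close} gives $d(\gamma_k(t),\beta_k(t)) < \delta_N$ for every $t$, whence $d(\alpha(t),\beta_k(t)) < 2\delta_N$ for $t < m(k)$. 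By Arzel\`a-Ascoli, any subsequence of $\{\beta_k\}$ has a further subsequence converging uniformly on compacts to a geodesic ray $\beta$ based at $p$; since $m(k) \to \infty$, the bound $d(\alpha(t),\beta(t)) \leq 2\delta_N$ passes to the limit for every $t$, so $[\beta] = [\alpha]$, and Lemma \ref{M-Morse uniformly converge to M-Morse} ensures $\beta$ is itself $N$-Morse. The convergence definition is therefore satisfied and $x_k \to [\alpha]$. The delicate point is in (a), where the (non-strict) triangle inequality must be converted to the strict bound demanded by $V_n(\alpha)$; this is precisely resolved by combining the strict inequality of Corollary \ref{asymp morse eventually close} with compactness of $[0,n]$.
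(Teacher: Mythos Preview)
Your arguments for (a) and (b) are correct and together establish that a sequence $x_k$ converges to $[\alpha]$ in the given sequential topology if and only if it is eventually in every $V_n(\alpha)$. This is exactly the content of the paper's condition (4), which the paper itself dispatches by citing \cite{bridson:1999fj} III.H Lemma~3.6, so in that sense you have supplied the details the paper omits.

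However, this does \emph{not} by itself prove that $\{V_n(\alpha)\}$ is a fundamental system of neighborhoods. The missing step is the paper's condition (3): for each $V_i(\alpha)$ there is a $V_j(\alpha)$ such that every $\gamma\in V_j(\alpha)$ has some $V_k(\gamma)\subset V_i(\alpha)$. The paper proves this geometrically using Corollary~\ref{morse close same basepoint}: taking $j=k=i+12\delta_N$, one gets $d(\alpha(t),\gamma'(t))<2\delta_N$ on $[0,i+12\delta_N]$ for any $\gamma'\in V_k(\gamma)$, and the corollary then improves this to $d(\alpha(t),\gamma'(t))<\delta_N$ on $[0,i+8\delta_N]$, so $\gamma'\in V_i(\alpha)$. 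This is what guarantees the $V_n(\alpha)$ are genuine topological neighborhoods, i.e.\ that $V_j(\alpha)$ sits inside the interior of $V_i(\alpha)$.

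Your implication (a) alone does not give this. In a sequential space the statement ``every sequence converging to $[\alpha]$ is eventually in $V_n(\alpha)$'' does \emph{not} in general force $V_n(\alpha)$ to be a neighborhood of $[\alpha]$: closure in such a topology may require transfinite iteration of sequential limits, so $[\alpha]$ can lie in $\overline{V_n(\alpha)^c}$ without being the limit of any sequence in $V_n(\alpha)^c$ (the Arens space is the standard counterexample). Thus the inference ``(a)+(b) $\Rightarrow$ fundamental system of neighborhoods'' is where your argument breaks; you need the nesting condition (3), and its proof genuinely uses the Morse geometry via Corollary~\ref{morse close same basepoint}, not just the convergence characterization.
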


\begin{proof}Need to show:
\begin{enumerate}
\item Each $V_i(\alpha) \in \{V_n(\alpha)  \}$ contains $\alpha$
\item If $V_i(\alpha), V_j(\alpha) \in \{V_n(\alpha) \mid n \in \N \}$ then there exists $V_k(\alpha) \in \{V_n(\alpha) \}$ such that $V_k(\alpha) \subset V_i(\alpha) \cap V_j(\alpha)$.
\item For each $V_i(\alpha)  \in \{V_n(\alpha) \}$ there exists a $V_j(\alpha)  \in \{V_n(\alpha)\}$ such that for each $\gamma \in V_j(\alpha)$ there exists $V_k(\gamma)  \in \{V_n(\gamma)\}$ such that $V_k(\gamma) \subset V_i(\alpha)$.
\item The topology on $\Bm{} X_p$ induced by the sequential definition and the fundamental system of  neighborhoods coincide.
\end{enumerate}
By Corollary \ref{asymp morse eventually close}, these sets determine well defined sets in $\Bm{} X$. We satisfy the first condition by definition. The second condition follows by setting $k= \max\{i, j \}$.

For the third condition consider a neighborhood $V_i(\alpha)$. Let $j=k= i+12\delta_N$. Let $\gamma \in V_j(\alpha)$ and $\gamma' \in V_k(\gamma)$. We know that $d(\alpha(t), \gamma'(t)) < d(\alpha(t), \gamma(t)) + d(\gamma(t), \gamma'(t)) < 2\delta_N$ for all $t \in [0, j]$. By Corollary \ref{morse close same basepoint} we know that $d(\alpha(t), \gamma'(t)) < \delta_N$ for all $t \in [0, i+8\delta_N]$ thus $V_k(\gamma) \subset V_i(\alpha)$.

To see the fourth condition we follow  \cite{bridson:1999fj} III.H Lemma 3.6 with $k=\delta_N$.
\end{proof}

\begin{cor}\label{maps continuous} Let $N$ and $N'$ be Morse gauges such that $N(\lambda,\epsilon) \leq N'(\lambda,\epsilon)$ for all $\lambda,\epsilon \in \N$. Then the obvious inclusion $i \colon \Bm{} X_p \hookrightarrow \Bm{'} X_p$ is continuous. 
\end{cor}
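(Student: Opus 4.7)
The plan is to argue directly from the sequential definition of the topology, after recording the elementary observation that the Morse property is monotone in the gauge. Specifically, if $N(\lambda,\epsilon) \leq N'(\lambda,\epsilon)$ for every $(\lambda,\epsilon)$, then every $N$-Morse geodesic ray is automatically $N'$-Morse: any $(\lambda,\epsilon)$-quasi-geodesic with endpoints on $\gamma$ that lies in $\Ncal_{N(\lambda,\epsilon)}(\gamma)$ also lies in the larger $\Ncal_{N'(\lambda,\epsilon)}(\gamma)$. In particular the set-theoretic inclusion $i$ is well defined.

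For continuity I will verify that convergent sequences map to convergent sequences, which characterizes the topology since closed sets are exactly the sequentially closed sets. Suppose $x_n \to x$ in $\Bm{} X_p$. Unpacking the definition of convergence, there exist $N$-Morse geodesic rays $\alpha_n$ based at $p$ with $[\alpha_n] = x_n$ such that every subsequence of $(\alpha_n)$ admits a further subsequence converging uniformly on compact sets to a geodesic ray $\alpha$ with $[\alpha] = x$. By the monotonicity observation each $\alpha_n$ is also $N'$-Morse, and Lemma \ref{M-Morse uniformly converge to M-Morse} (applied with gauge $N$, and hence also with gauge $N'$) ensures the limit $\alpha$ is $N'$-Morse as well. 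Therefore the very same sequence $(\alpha_n)$ witnesses the convergence $i(x_n) \to i(x)$ in $\Bmprime X_p$, giving continuity.

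There is essentially no obstacle: the argument is a direct unpacking of definitions once one sees that every ingredient in the convergence criterion (being based at $p$, being $N$-Morse, and converging uniformly on compact sets) is preserved when the gauge $N$ is relaxed to $N'$. As an optional sanity check, one can phrase the same proof via the neighborhood basis: the constant $\delta_N = \max\{4N(1,2N(5,0))+2N(5,0),\, 8N(3,0)\}$ is monotone in $N$, so the basic neighborhood $V_n(\alpha)$ computed in $\Bm{} X_p$ (with radius $\delta_N$) is contained in the corresponding neighborhood computed in $\Bmprime X_p$ (with radius $\delta_{N'} \geq \delta_N$), and continuity then follows from the preimage criterion.
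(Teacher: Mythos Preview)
Your main argument is correct and is essentially the paper's proof in contrapositive form: the paper checks that preimages of closed sets are sequentially closed, while you check sequential continuity directly, and both rest on the single observation that the $N$-Morse witnesses for convergence in $\Bm{} X_p$ are automatically $N'$-Morse witnesses in $\Bmprime X_p$.

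One caution about your optional sanity check: the claim that $\delta_N$ is monotone in $N$ is not quite automatic. From $N \leq N'$ you get $N(1,2N(5,0)) \leq N'(1,2N(5,0))$, but comparing this to $N'(1,2N'(5,0))$ requires $N'$ to be monotone in its second argument, which is not part of the definition of a Morse gauge. Since this was only offered as an alternative and your primary sequential argument stands on its own, this does not affect the validity of your proof.
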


\begin{proof} Let $V$ be a closed set in $\Bm{'}X_P$. We wish to show $i^{-1}(V)$ is closed. Let $\alpha_i \in  i^{-1}(V)$ be a sequence of geodesic rays converging to a ray $\alpha$. Since $i$ is an inclusion we can consider $\alpha_i$ as a sequence of rays in $V$. Since $V$ is closed, the $\alpha_i$ converge to some ray $\alpha$ in $V$. But by Lemma \ref{M-Morse uniformly converge to M-Morse}, $\alpha$ is $N$-Morse and thus $i^{-1}(\alpha)= \alpha$ and thus $i^{-1}(V)$ is closed.
\end{proof}

With Corollary \ref{maps continuous} in mind we can now define the Morse boundary, $\bm X_p$. 

\begin{defn} Let $\mathcal M$ be the set of all Morse gauges. We put a partial ordering on $\mathcal M$ so that  for two Morse gauges $N, N' \in \mathcal M$, we say $N \leq N'$ if and only if $N(\lambda,\epsilon) \leq N'(\lambda,\epsilon)$ for all $\lambda,\epsilon \in \N$. Thus we can define  \begin{equation*}
\bm X_p=\varinjlim_\mathcal{M} \Bm{}X_p \end{equation*} with the induced direct limit topology. 
\end{defn}

\begin{rem}[Continuous maps between direct limits] \label{rem:direct lim} Let $X, Y$ be two proper geodesic metric spaces and $\bm X_p$ and $\bm Y_q$ be their Morse boundaries. Let $i^X_{N,N'} \colon \Bm{} X_p \to \Bm{'} X_p$ be the continuous inclusions as described in Corollary \ref{maps continuous}. Suppose $g \colon \mathcal{M} \to \mathcal{M}$ is a ``direction preserving map," i.e., $N \leq N' \implies g(N) \leq g(N')$. If for each $N \in \mathcal{M}$ we have a continuous map $f_N \colon \Bm{}X_p \to \Bmg Y_q$ such that $f_{N'} \circ i^{X}_{N,N'} =i^Y_{g(N),g(N')} \circ f_N$ whenever $N \leq N'$, then by the universal property of direct limits the family $\{f_N\}$ induces a continuous map $f \colon \bm X_p \to \bm Y_q$.
\end{rem}

\begin{prop}[Independence of basepoint] \label{prop:basept indep} Let $X$ be a proper geodesic space. The direct limit topology on $\bm X_p=\varinjlim\Bm{}X_p$ is independent of basepoint $p$. 
\end{prop}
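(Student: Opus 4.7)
The plan is to construct mutually inverse continuous bijections $\Phi \colon \bm X_p \to \bm X_{p'}$ and $\Psi \colon \bm X_{p'} \to \bm X_p$ using Lemma \ref{basepoint change} to transport rays between basepoints in a controlled way. Given $[\alpha] \in \bm X_p$ with an $N$-Morse representative $\alpha$ based at $p$, I would apply Lemma \ref{basepoint change} to produce a $g(N)$-Morse geodesic ray $\beta$ based at $p'$ asymptotic to $\alpha$, where the gauge $g(N)$ depends only on $N$ and $d(p,p')$, and set $\Phi([\alpha]) = [\beta]$. Well-definedness is immediate: if $\alpha_1, \alpha_2$ are two representatives at $p$ with $\alpha_1 \sim \alpha_2$, then the resulting $\beta_i$ are each asymptotic to $\alpha_i$ and hence asymptotic to each other.

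For continuity of $\Phi$, rather than attempting to verify the direction-preserving hypothesis of Remark \ref{rem:direct lim}, I would appeal directly to the universal property of the direct limit topology: $\Phi$ is continuous if and only if each restriction $\Phi|_{\Bm{} X_p}$ is continuous. Since $\Phi$ maps $\Bm{} X_p$ into $\Bmg X_{p'}$, and the inclusion $\Bmg X_{p'} \hookrightarrow \bm X_{p'}$ is continuous by Corollary \ref{maps continuous} together with the definition of the direct limit, it suffices to verify that the induced map $f_N \colon \Bm{} X_p \to \Bmg X_{p'}$ is continuous. Suppose $[\alpha_n] \to [\alpha]$ in $\Bm{} X_p$, witnessed by $N$-Morse representatives $\alpha_n$ every subsequence of which admits a subsubsequence converging uniformly on compact sets to a representative $\alpha$ of $[\alpha]$. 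Let $\beta_n$ be the ray at $p'$ produced from $\alpha_n$ by Lemma \ref{basepoint change}; these are all $g(N)$-Morse and satisfy a uniform bound $d(\alpha_n(t), \beta_n(t)) \leq C$ where $C = 4N(1,2d(p,p')) + 3d(p,p')$. Applying Arzelà-Ascoli to any subsequence of $\beta_n$ yields a subsubsequence converging uniformly on compact sets to a ray $\beta^*$ at $p'$, which is $g(N)$-Morse by Lemma \ref{M-Morse uniformly converge to M-Morse}. The uniform distance bound combined with the uniform convergence of the $\alpha_n$ forces $d(\beta^*(t), \alpha(t)) \leq C$ for all $t$, so $[\beta^*] = f_N([\alpha])$, verifying the sequential definition of convergence.

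By symmetry the analogous map $\Psi \colon \bm X_{p'} \to \bm X_p$ is continuous. Each composition $\Psi \circ \Phi$ and $\Phi \circ \Psi$ sends $[\alpha]$ to a class represented by a ray asymptotic (through an intermediate ray at the opposite basepoint) to the original $\alpha$, so both compositions are the identity. The main technical hurdle is the continuity argument: the delicate step is confirming that every Arzelà-Ascoli limit $\beta^*$ extracted from the sequence $\beta_n$ actually lies in the same equivalence class as the chosen $\beta = \beta(\alpha)$, and that this limit is $g(N)$-Morse rather than only $g(N')$-Morse for some larger gauge $N'$. Both points are controlled entirely by the uniform distance estimate supplied by Lemma \ref{basepoint change} propagating through the compact-convergence limit.
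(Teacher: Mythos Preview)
Your argument is correct and follows the same overall architecture as the paper's proof: build mutually inverse maps $\Phi,\Psi$ between $\bm X_p$ and $\bm X_{p'}$ using Lemma~\ref{basepoint change}, observe that each composite returns a ray to an asymptotic ray at the same basepoint and is therefore the identity, and then verify continuity stratum-by-stratum.

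The difference lies in how continuity of $f_N \colon \Bm{} X_p \to \Bmg X_{p'}$ is checked. The paper works with the fundamental system of neighborhoods $\{V_n(\alpha)\}$: given $V_n(\alpha')$ in the target, it exhibits an explicit $m$ (in terms of $n$, $\delta_N$, and the constant $\theta$ from Lemma~\ref{basepoint change}) so that $i(V_m(\alpha)) \subset V_n(\alpha')$, invoking Corollary~\ref{morse close same basepoint} for the final tightening from $3\theta$ down to $\delta_{g(N)}$. You instead exploit that the topology on each stratum is sequential by definition, and verify sequential continuity directly via Arzel\`a--Ascoli plus Lemma~\ref{M-Morse uniformly converge to M-Morse} and the uniform distance bound from Lemma~\ref{basepoint change}. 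Your approach is slightly softer---it avoids the neighborhood arithmetic and, as you note, sidesteps having to check that $g$ is direction-preserving in the sense of Remark~\ref{rem:direct lim}---while the paper's approach yields an explicit modulus of continuity. One small point worth making precise in your write-up: after extracting a convergent subsubsequence of the $\beta_n$, you must pass to a \emph{further} subsubsequence along which the corresponding $\alpha_n$ also converge (to some, possibly varying, representative of $[\alpha]$) before the inequality $d(\beta^*(t),\alpha(t))\le C$ can be read off; you allude to this but the double extraction should be stated.
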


\begin{proof}
Using Lemma \ref{basepoint change}, we see that there exists a map  \begin{equation*} i \colon \Bm{}X_p \hookrightarrow \Bmg X_{p'} \end{equation*} where $g \colon \mathcal{M} \to \mathcal{M}$ is a direction preserving map and $i(\alpha)$ is asymptotic to $\alpha$. This extends to a map $I \colon \bm X_p \to \bm X_{p'}$. We can do the same procedure and get a map $J \colon \bm X_{p'} \to \bm X_{p}$. We note that $J \circ I=Id$ because $J \circ I$ takes a Morse ray based at $p$ to another Morse ray based at $p$ that is asymptotic to the original. $I \circ J$ is also the identity map by the same reasoning and therefore $I$ is a bijection.

By Remark \ref{rem:direct lim} it is enough to show that $i \colon \Bm{}X_p \hookrightarrow \Bmg X_{p'}$ is continuous for all $N \in \mathcal{M}$. Let $\alpha \colon [0, \infty) \rightarrow X$ be a geodesic ray such that $\alpha(0)=p$. Let $\alpha'$ be a geodesic ray such that $i(\alpha)=\alpha'$ and consider some neighborhood $V_n(\alpha')\subset \Bmg X_{p'}$. Let $\theta= \max\{4N(1,2d(p,p')) + 3d(p,p'), \delta_N \}$ We claim if $m= n+6\theta$ then $i(V_m(\alpha)) \subset V_n(\alpha')$.

We know by Lemma \ref{basepoint change} that  $d(\alpha(t), \alpha'(t))< \theta$ for all $t \in [0, \infty)$. Let $\gamma$ be a ray in $V_m(\alpha)$. Then by definition, $d(\gamma(t), \alpha(t))< \delta_N \leq \theta$ for all $t \in [0, m]$. Consider $I(\gamma) =\gamma'$. Again, using Lemma \ref{basepoint change} we know that  $d(\gamma(t), \gamma'(t))< \theta$ for all $t \in [0, \infty)$. Thus 
\begin{align*}
d(\alpha'(t), \gamma'(t)) &<  d(\alpha(t), \alpha'(t)) + d(\alpha(t), \gamma(t))+ d(\gamma(t), \gamma'(t)) <3\theta 
\end{align*} for all $t \in [0,m]$. So by Corollary \ref{morse close same basepoint}, $d(\alpha'(t), \gamma'(t))< \delta_{g(N)}$ for all $t \in [0, n]$ and thus $i(\gamma) \in V_n(\alpha')$, and we have our result.
\end{proof}

\begin{rem}
In light of Proposition \ref{prop:basept indep}, when convenient, we will assume the basepoint is fixed, suppress it from the notation and write $\bm X = \varinjlim\Bm{}X$.
\end{rem}

Let $f: X \to Y$ be a $(\lambda, \epsilon)$-quasi-isometry. Fix base points $p \in X$ and $f(p) \in Y$. By Lemma \ref{straighten quasi-geodesics}, $f$ induces a map \begin{equation*} \bm f \colon \bm X_p \to \bm Y_{f(p)} \end{equation*} which maps $\Bm{} X_p$ into $\Bmg Y_{f(p)}$ for some direction preserving map $g: \mathcal{M} \to \mathcal{M}$.
 
\begin{prop}[Quasi-isometry invariance] \label{quasi-isometry invariance} Let $f \colon X \to Y$ be a $(\lambda, \epsilon)$-quasi-isometry of proper geodesic spaces. Then $\bm f \colon \bm X \to \bm Y$ is a homeomorphism. \end{prop}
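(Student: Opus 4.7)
The plan is to show $\bm f$ is a bijection with continuous inverse, deriving continuity in both directions from a single symmetric argument applied to $f$ and to its quasi-inverse $g \colon Y \to X$. For bijectivity, Lemma \ref{straighten quasi-geodesics} applied to $g$ produces a candidate $\bm g \colon \bm Y \to \bm X$. Given an $N$-Morse ray $\gamma$ at $p$, the composition $g \circ f \circ \gamma$ stays within the quasi-inverse constant $C$ of $\gamma$ pointwise, so chaining with the Hausdorff bounds of Lemma \ref{straighten quasi-geodesics} produces a representative of $\bm g \bm f [\gamma]$ at bounded Hausdorff distance from $\gamma$; basepoint independence (Proposition \ref{prop:basept indep}) then identifies $\bm g \bm f [\gamma] = [\gamma]$. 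The symmetric argument gives $\bm f \bm g = \mathrm{id}$, so $\bm f$ is a bijection and, in particular, is well defined on equivalence classes independently of the choice of representative and of straightening ray.

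For continuity, by Remark \ref{rem:direct lim} it suffices to show each restriction $\Bm{} X_p \to \Bmg Y_{f(p)}$ is continuous, which by the neighborhood basis means finding for each $n$ some $m = m(n, N, \lambda, \epsilon)$ with $\bm f(V_m(\alpha)) \subset V_n(\alpha')$, writing $\alpha' = \bm f[\alpha]$ and $\gamma' = \bm f[\gamma]$. For $[\gamma] \in V_m(\alpha)$ the definition gives $d(\gamma(t), \alpha(t)) < \delta_N$ for $t < m$; the quasi-isometry inequality upgrades this to $d(f\gamma(t), f\alpha(t)) < \lambda \delta_N + \epsilon$ on the same range. Lemma \ref{straighten quasi-geodesics} guarantees that $\alpha'$ and $\gamma'$ are $g(N)$-Morse rays at $f(p)$ sitting at Hausdorff distance at most some $C = C(N, \lambda, \epsilon)$ from $f\alpha$ and $f\gamma$ respectively. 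Since $\alpha'$ is unit speed while $f\alpha$ is $(\lambda, \epsilon)$-Lipschitz, a point $\alpha'(s)$ corresponds via the $C$-bound to some $f\alpha(t)$ with $t \leq \lambda(s + C + \epsilon)$; whenever this $t$ lies below $m$, chaining the three bounds produces a point of $\gamma'$ within $K := 2C + \lambda\delta_N + \epsilon$ of $\alpha'(s)$.

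Finally, since $\alpha', \gamma'$ are $g(N)$-Morse rays with common basepoint $f(p)$, Lemma \ref{lem:twodelta} upgrades the Hausdorff bound $d(\alpha'(S), \mathrm{im}(\gamma')) < K$, valid for some $S$ growing linearly in $m$, to the parameterized bound $d(\alpha'(t), \gamma'(t)) \leq 8\,g(N)(3,0) < \delta_{g(N)}$ for all $t < S - K - 4\,g(N)(3,0)$. Choosing $m$ so this threshold exceeds $n$ forces $\gamma' \in V_n(\alpha')$; applying the identical argument to the quasi-inverse $g$ yields continuity of $\bm g = \bm f^{-1}$, and the two continuous maps are mutually inverse by the first step. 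The main obstacle is the bookkeeping in this final step: one must verify that the losses (quasi-isometry distortion, straightening-to-Hausdorff, Hausdorff-to-parameterized) compound into a threshold $m$ depending only on $n, N, \lambda, \epsilon$, so that the construction assembles, via Remark \ref{rem:direct lim}, into a well-defined continuous map on $\bm X$.
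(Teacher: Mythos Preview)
Your proposal is correct and follows essentially the same route as the paper: reduce continuity to each stratum via Remark~\ref{rem:direct lim}, push the $\delta_N$-closeness through $f$, straighten via Lemma~\ref{straighten quasi-geodesics} to get a uniform Hausdorff bound, and then invoke Lemma~\ref{lem:twodelta} to upgrade to the parameterized $\delta_{g(N)}$-bound needed for $V_n$. The only difference is that the paper outsources bijectivity to \cite[Theorem~3.11]{charney:2015aa}, whereas you supply it directly via the quasi-inverse; both then obtain continuity of $\bm f^{-1}$ by applying the identical argument to $g$.
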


\begin{proof} The proof that $\bm f$ is bijective is the same as in Theorem 3.11 in \cite{charney:2015aa}. It remains to prove continuity.

By Remark \ref{rem:direct lim} we need only show  \begin{equation*} \bm f_N \colon \Bm{}X \hookrightarrow \Bmg Y \end{equation*} is continuous. Let $\gamma \in \Bm{} X$ and consider  $V_n(\bm f(\gamma)) \subset \Bmg Y$. We show that there exists an $m$ sufficiently large such that $\bm f (V_m(\gamma)) \subset V_n(\bm f(\gamma))$. 

Let $\beta \in V_m(\gamma)$. Then $f \circ \beta$ and $f \circ \gamma$ are $N''$-Morse $(\lambda, \epsilon)$-quasi-geodesics. By definition of $V_m(\gamma)$ we know that \begin{equation*} d(f \circ \beta(t), f \circ \gamma(t))< \lambda \delta_N + \epsilon \end{equation*} for all $t \in [0,m]$. Moreover, by choosing $m$ sufficiently large, we may assume $(f \circ \beta)(m)$ and $(f \circ \gamma)(m)$ are arbitrarily far from the basepoint $f(p)$, say a distance $m' \gg n$. As in the proof of Lemma \ref{straighten quasi-geodesics}, we straighten $f \circ \beta$, $f \circ \gamma$ to $N'$-Morse geodesic rays $\beta' := \bm f(\beta)$, $\gamma' := \bm f(\gamma)$ which are Hausdorff distance $C$ from $f\circ \beta$ and $f \circ \gamma$ respectively (where $C$ depends only on $N, \lambda, \epsilon$). We note that $\beta'(s)$ lies in the $(\lambda \delta_{N} + \epsilon + 2C)$ neighborhood of the image of $\gamma'$ for some $s > m'-C$. Choosing $m' > n +(\lambda \delta_{N} + \epsilon + 2C) +4N'(3,0)+C$ we have by Lemma \ref{lem:twodelta} that $d(\beta'(t), \gamma'(t))< \delta_{g(N)}$ for all $t \in [0,n]$, and we have our result.
\end{proof}

Next we show that the Morse boundary coincides with the contracting boundary and the Gromov boundary. 

We begin with a description of the contracting boundary of a $\mathrm{CAT}(0)$ space. For more details see \cite{charney:2015aa}.

Let $X$ be a $\mathrm{CAT}(0)$ space. We define the set $\partial X$ to be the set of equivalence classes of geodesic rays up to asymptotic equivalence and denote the equivalence class of a ray by $[\alpha]$. One natural topology on $\partial X$ is the visual topology. We define the topology of the boundary with a system of neighborhood bases. A neighborhood basis for $[\alpha]$ is given by open sets of the form:
 $$U(\alpha, r, \epsilon) = \{[\beta] \in \partial X \mid \beta \text{ is a geodesic ray at } p \text{ and } \forall t<r, d(\beta(t), \alpha(t))< \epsilon \}.$$

\begin{defn}[contracting geodesics] Given a fixed constant $D$, a geodesic $\gamma$ is said to be \emph{$D$-contracting} if for all $x, y \in X$, $$d_X(x,y) < d_X(x, \pi_\gamma(x)) \implies d_X(\pi_\gamma(x), \pi_\gamma(y))<D.$$ We say that $\gamma$ \emph{contracting} if it is $D$-contracting for some $D$. An equivalent definition is that any metric ball $B$ not intersecting $\gamma$ projects to a segment of length $<2D$ on $\gamma$.
\end{defn}

Let $X$ be a complete $\mathrm{CAT}(0)$ space with basepoint $p \in X$. We define the \emph{contracting boundary} of a $\mathrm{CAT}(0)$ space $X$ to be the subset of the visual boundary consisting of $$\partial_c X_p = \{ [\alpha] \in \partial X \mid \alpha \text{ is contracting with basepoint } p \}.$$ 

In order to topologize the contracting boundary we consider a collection of increasing subsets of the boundary, $$\partial_c^n X_p = \{ [\gamma] \in \partial X \mid \gamma(0)=p, \gamma \text{ is an $n$-contracting ray}\},$$ one for each $n \in \N$. We topologize each $\partial_c^n X_p$ with the subspace topology from the visual boundary of $X$ and topologize the whole boundary by taking the direct limit over these subsets. Thus $\partial_c X_p=\varinjlim \partial_c^n X_p$ with the direct limit topology.

\begin{thm} If $X$ is a proper $\mathrm{CAT}(0)$ space then $\bm X$ and $\partial_c X$ are homeomorphic. \end{thm}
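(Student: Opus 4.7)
The plan is to show the identity on equivalence classes of rays gives a homeomorphism $\Phi \colon \bm X \to \partial_c X$. First, by the Charney--Sultan characterization in \cite{charney:2015aa}, in a proper $\mathrm{CAT}(0)$ space a geodesic ray is $N$-Morse if and only if it is $D$-contracting, with $D$ and $N$ each determined from the other via monotone (direction preserving) assignments $N \mapsto g(N)$ and $n \mapsto h(n)$. Consequently the underlying sets $\bm X_p$ and $\partial_c X_p$ coincide, and the two filtrations $\{\Bm{} X_p\}_{N \in \mathcal{M}}$ and $\{\partial_c^n X_p\}_{n \in \N}$ are mutually cofinal.

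Next I would show that on each filtration piece the two topologies coincide. The key $\mathrm{CAT}(0)$ fact is that convexity of the distance function forces every asymptotic class to contain a unique geodesic representative based at $p$. Hence convergence $[\alpha_i] \to [\alpha]$ in the visual topology on $\partial_c^n X_p$ is exactly uniform-on-compact-sets convergence of the canonical rays from $p$. This matches the sequential definition of convergence in $\Bm{} X_p$ (every subsequence has a sub-subsequence converging uniformly on compact sets to a ray in the same equivalence class), with Lemma \ref{M-Morse uniformly converge to M-Morse} ensuring the limit is again $N$-Morse. So the identity is a homeomorphism between $\Bm{} X_p$ and $\partial_c^{g(N)} X_p$ for every $N$, and symmetrically between $\partial_c^n X_p$ and its Morse counterpart.

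Finally I would assemble these levelwise homeomorphisms via the universal property of direct limits (Remark \ref{rem:direct lim}): the continuous inclusions $\Bm{}X_p \hookrightarrow \Bm{'}X_p$ of Corollary \ref{maps continuous} and the visual-topology inclusions $\partial_c^n X_p \hookrightarrow \partial_c^{n'} X_p$ commute with the levelwise identity maps, so they induce mutually inverse continuous maps between $\bm X_p$ and $\partial_c X_p$. I expect the main obstacle to be the middle step: carefully verifying that the $V_n(\alpha)$-basis defining the Morse topology and the visual basis $U(\alpha, r, \epsilon)$ generate the same topology on each filtration piece. This uses $\mathrm{CAT}(0)$ uniqueness of the ray from $p$ (so that we are really comparing the same underlying rays rather than juggling multiple asymptotic representatives) together with Corollary \ref{asymp morse eventually close}, which uniformly controls how asymptotic Morse rays sharing a basepoint can spread apart.
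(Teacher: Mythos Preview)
Your proposal is correct and follows essentially the same route as the paper: invoke the Charney--Sultan equivalence of Morse and contracting rays to identify the underlying sets and make the two filtrations cofinal, then check that the topologies agree on each filtration piece, and conclude by the direct limit machinery. The only cosmetic difference is that you phrase the levelwise comparison via sequential convergence (uniform-on-compact convergence of the unique $\mathrm{CAT}(0)$ ray from $p$), whereas the paper compares the neighborhood bases $V_n(\alpha)$ and $U(\alpha,r,\epsilon)$ directly, using $\mathrm{CAT}(0)$ convexity to show that $V_n(\alpha) \subset U(\alpha,r,\epsilon)$ for $n \gg r$; these are equivalent formulations of the same fact, and indeed your final paragraph anticipates exactly this comparison.

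One small wording issue: you write that the identity is a homeomorphism between $\Bm{} X_p$ and $\partial_c^{g(N)} X_p$, but these need not be equal as sets (one merely embeds in the other). What you actually need, and what your cofinality and direct-limit discussion correctly supplies, is that the identity restricts to a topological embedding in each direction between corresponding strata; the homeomorphism is then obtained only after passing to the limit.
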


\begin{proof} 

Choose $p \in X$. By Theorem 2.9 in \cite{charney:2015aa} we know in a $\mathrm{CAT}(0)$ space that a geodesic ray is $D$-contracting if and only if it is $N$-Morse where $N$ depends only on $D$ and vice-versa. Thus it suffices to show that the topology on $\Bm{}X$ coincides with the subspace topology on $\partial_C^D X$ for the corresponding contracting constant $D$. It is clear that the topology on $\partial_C^D X$ contains the topology $\Bm{}X$. The reverse inclusion follows from the CAT(0) triangle condition. (Consider $U(\gamma, r, \epsilon)$. By the CAT(0) triangle condition we can choose an $n>>r$ large enough so that if $\beta \in V_n(\gamma)$ then $d(\gamma(t), \beta(t))< \epsilon$ for all $t<r$.)
 \end{proof}

\begin{thm} If $X$ is a proper geodesic $\delta$-hyperbolic space, then $\partial X = \bm X$. \end{thm}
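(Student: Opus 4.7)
The plan is to reduce to the previous theorem's strategy: show first that $\partial X$ and $\bm X$ coincide as sets because every geodesic ray is Morse with a uniform gauge, then show the two topologies agree.

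First, I would invoke the classical Morse lemma for $\delta$-hyperbolic spaces: there is a function $N_0 \colon [1,\infty) \times [0,\infty) \to [0,\infty)$ depending only on $\delta$ such that every $(\lambda,\epsilon)$-quasi-geodesic with endpoints on a geodesic $\gamma$ lies in the $N_0(\lambda,\epsilon)$-neighborhood of $\gamma$. Consequently every geodesic ray in $X$ is $N_0$-Morse. In particular, the underlying set of the visual boundary $\partial X$ (equivalence classes of geodesic rays under bounded Hausdorff distance) equals the underlying set of $\bm X$, and the direct limit $\bm X = \varinjlim_{\mathcal{M}} \Bm{}X_p$ stabilizes at $N_0$: for every $N\ge N_0$ the inclusion $\Bm{0}X_p \hookrightarrow \Bm{}X_p$ of Corollary~\ref{maps continuous} is in fact an equality, and so $\bm X = \Bm{0} X_p$ as topological spaces (fix any basepoint $p$, which is legal by Proposition~\ref{prop:basept indep}).

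Next I would compare the two topologies on this common set. Recall the visual topology on $\partial X$ for a proper hyperbolic space is generated by the neighborhoods
\[
U(\alpha, r, \epsilon) = \{[\beta] \in \partial X \mid \exists \, \beta \in [\beta] \text{ with } \beta(0)=p \text{ and } d(\alpha(t),\beta(t))<\epsilon \; \forall t<r\},
\]
while the Morse topology on $\Bm{0}X_p$ is generated by the $V_n(\alpha)$ of Lemma after Proposition~\ref{morse close}, namely rays $\gamma$ with $\gamma(0)=p$ and $d(\alpha(t),\gamma(t))<\delta_{N_0}$ for $t<n$. Since $\delta_{N_0}$ is a fixed constant depending only on $\delta$, every $V_n(\alpha)$ is of the form $U(\alpha,n,\delta_{N_0})$, so the visual topology is finer than the Morse topology. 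For the reverse direction, given $U(\alpha,r,\epsilon)$ I would choose $n\gg r$ and argue that $V_n(\alpha)\subset U(\alpha,r,\epsilon)$ using Corollary~\ref{morse close same basepoint}: any $\gamma\in V_n(\alpha)$ satisfies $d(\alpha(t),\gamma(t))<\delta_{N_0}$ for $t<n$, and since $\alpha,\gamma$ are asymptotic $N_0$-Morse rays from the same basepoint, the key lemma/Corollary~\ref{asymp morse eventually close} forces them to be uniformly $\delta_{N_0}$-close; thinning the bound on $[0,r]$ to $\epsilon$ is achieved by refining $n$ via the exponential-divergence of geodesic rays in a hyperbolic space (equivalently, by combining the stability constant for a $(1,0)$-quasi-geodesic concatenation with the slimness estimate of Lemma~\ref{lem:triangle slim}).

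The main obstacle I expect is this last step: matching the small fellow-traveling distance $\epsilon$ in $U(\alpha,r,\epsilon)$ against the fixed constant $\delta_{N_0}$ appearing in $V_n(\alpha)$. In the CAT(0) case the author uses the CAT(0) triangle condition to get sharp bounds on fellow-travel distances; here the analogue is the thin-triangle property of $\delta$-hyperbolic spaces, which again forces two geodesics from $p$ sharing a common endpoint far out to be within $2\delta$ of each other on any bounded initial interval. Once this quantitative comparison is in place, the two systems of neighborhoods generate the same topology, and combined with the set-theoretic identification from the first step we conclude $\partial X = \bm X$ as topological spaces.
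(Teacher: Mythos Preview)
Your first two paragraphs are exactly the paper's argument: invoke the Morse lemma for $\delta$-hyperbolic spaces to get a single gauge $N_0$ depending only on $\delta$, conclude that every ray is $N_0$-Morse, and hence $\bm X = \partial_M^{N_0} X_p = \partial X$ as sets with the direct limit stabilized.

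The divergence is in the topological comparison. You import the CAT(0)-style neighborhoods $U(\alpha,r,\epsilon)$ with arbitrary $\epsilon$, but this is \emph{not} how the Gromov boundary of a $\delta$-hyperbolic space is topologized in \cite{bridson:1999fj}~III.H. There the topology is defined exactly as the paper defines the topology on $\partial_M^{N} X_p$: via sequential convergence of rays (uniformly on compacta), shown in III.H~Lemma~3.6 to be equivalent to the neighborhood system $V_n(\alpha)$ with a \emph{fixed} constant $k>2\delta$. So once you set $N=N_0$, the two topologies are literally identical by definition; this is what the paper's one-line proof says. No further comparison is needed.

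Your flagged ``main obstacle'' is therefore an artifact of using the wrong target topology, and your proposed fix does not work in general: in a $\delta$-hyperbolic space two geodesic rays from $p$ representing the same boundary point are guaranteed only to stay within roughly $2\delta$ of one another, not within an arbitrary $\epsilon$. Exponential divergence and thin triangles give uniform \emph{upper} bounds of order $\delta$, but they do not force the fellow-travel distance on $[0,r]$ below a prescribed small $\epsilon$ no matter how large $n$ is. The CAT(0) convexity used in the previous theorem has no analogue here, which is precisely why Bridson--Haefliger build the hyperbolic boundary topology with a fixed constant rather than with all $\epsilon>0$.
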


\begin{proof} By \cite{bridson:1999fj} we know there exists a constant $R(K, L, \delta)$ such that if $\alpha$ is a $(K,L)$-quasi-geodesic  with endpoints on any geodesic $\gamma$, then $\alpha \in \mathcal{N}_R(\gamma)$. Setting $N(K,L)= R(K,L,\delta)$ we get $\bm X = \Bm{} X = \partial X$. The topology on $\partial X$, as defined in \cite{bridson:1999fj}, is identical to the topology on $\Bm{} X$. Thus we have a homeomorphism.
 \end{proof}
 
 At the other extreme, there are proper geodesic spaces where the Morse boundary is empty. Examples include products and groups with laws \cite{drutu:2005aa}.
 
Finally, we show that the Morse boundary is a visibility space. 
 
 \begin{prop}[Visibility] If $X$ is a proper geodesic metric space, then for each pair of distinct points $\xi_1, \xi_2 \in \bm X$ there exists a geodesic $\beta \colon \mathbb{R} \to X$ with $\beta([0,\infty))$ is asymptotic to $\xi_1$ and $\beta([0,-\infty))$ is asymptotic to $\xi_2$. Furthermore, $\beta$ is Morse where its Morse gauge depends on the Morse gauges of chosen representatives of $\xi_1$ and $\xi_2$. \end{prop}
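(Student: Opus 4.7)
The plan is to construct the desired bi-infinite geodesic as a limit of geodesic segments joining points far out along chosen Morse representatives, so the main work is an Arzelà--Ascoli argument combined with showing that these segments cannot escape to infinity.

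First I would choose $N$-Morse representatives $\alpha_1, \alpha_2 \colon [0,\infty) \to X$ of $\xi_1, \xi_2$ with $\alpha_1(0) = \alpha_2(0) = p$ for a common basepoint $p$ (using Proposition \ref{prop:basept indep} and Lemma \ref{basepoint change}). For each $n \in \N$, let $\beta_n$ be a geodesic segment joining $\alpha_1(n)$ to $\alpha_2(n)$. By Lemma \ref{lem:triangle slim} the triangle $\alpha_1([0,n]) \cup \beta_n \cup \alpha_2([0,n])$ is $4N(3,0)$-slim, and by Lemma \ref{lem:third edge N'} each $\beta_n$ is $N'$-Morse for some $N'$ depending only on $N$.

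The key step is to show that the segments $\beta_n$ all pass uniformly close to $p$. For each $n$ large enough that $\alpha_1(n) \neq \alpha_2(n)$, the continuous function $t \mapsto d(\beta_n(t), \mathrm{im}(\alpha_1)) - d(\beta_n(t), \mathrm{im}(\alpha_2))$ takes opposite signs at the two endpoints of $\beta_n$, so by the intermediate value theorem and slimness of the triangle there is a point $z_n \in \beta_n$ with $d(z_n, \alpha_i(s_n^i)) \leq 4N(3,0)$ for each $i \in \{1,2\}$, for some parameters $s_n^1, s_n^2$. By the triangle inequality $d(\alpha_1(s_n^1), \mathrm{im}(\alpha_2)) \leq 8N(3,0)$. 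Since $[\alpha_1] \neq [\alpha_2]$, Lemma \ref{lem:twodelta} forbids $s_n^1$ from being arbitrarily large (otherwise $d(\alpha_1(t), \alpha_2(t)) \leq 8N(3,0) < \delta_N$ for all sufficiently large $t$, forcing $\alpha_1$ and $\alpha_2$ into the same equivalence class). Hence $s_n^1$ is bounded above by some constant $S$ depending only on $\alpha_1, \alpha_2, N$, and so $d(z_n, p) \leq S + 4N(3,0)$.

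Now reparameterize each $\beta_n$ as a geodesic $\beta_n \colon [L_n, M_n] \to X$ with $\beta_n(0) = z_n$; since the endpoints go to infinity along $\alpha_1$ and $\alpha_2$, we have $L_n \to -\infty$ and $M_n \to \infty$ while the basepoints $z_n$ remain in a ball about $p$. Corollary \ref{cor:AA2} then produces a subsequence converging uniformly on compact sets to a bi-infinite geodesic $\beta \colon \mathbb{R} \to X$, and the obvious extension of Lemma \ref{M-Morse uniformly converge to M-Morse} to bi-infinite geodesics shows that $\beta$ is $N'$-Morse (given a quasi-geodesic $\sigma$ with endpoints on $\beta$, its endpoints are approximated by points on $\beta_n$ for $n$ large, reducing to the Morse property of $\beta_n$).

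It remains to identify the endpoints at infinity. The forward segment $\beta_n|_{[0, M_n]}$, concatenated with a geodesic from $p$ to $z_n$, forms a $(1, 2(S + 4N(3,0)))$-quasi-geodesic with endpoints $p$ and $\alpha_1(n)$, both on $\alpha_1$. Lemma \ref{near morse means hausdorff close} therefore bounds the Hausdorff distance between this concatenation and $\alpha_1|_{[0,n]}$ by a constant depending only on $N$, and taking $n \to \infty$ shows $\beta|_{[0,\infty)}$ has bounded Hausdorff distance from $\alpha_1$; by Corollary \ref{asymp morse eventually close} it follows that $\beta|_{[0,\infty)}$ represents $\xi_1$. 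The symmetric argument gives $\beta|_{(-\infty,0]}$ asymptotic to $\xi_2$. The main obstacle is the boundedness step: without Lemma \ref{lem:twodelta} to convert ``coming back close once'' into ``asymptotic,'' one cannot rule out the possibility that the $\beta_n$ drift to infinity and no bi-infinite limit exists.
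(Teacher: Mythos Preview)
Your proof is correct and follows the same overall strategy as the paper: build segments $\beta_n$ between $\alpha_1(n)$ and $\alpha_2(n)$, use slimness to show they meet a fixed compact set, extract a bi-infinite limit via Arzel\`a--Ascoli, and invoke Lemma~\ref{lem:third edge N'} together with Lemma~\ref{M-Morse uniformly converge to M-Morse} for the Morse conclusion.

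The one place where your argument genuinely diverges is the compactness step. The paper simply picks $D$ so that $d(\alpha_1(D),\mathrm{im}(\alpha_2))>4N(3,0)$ (such $D$ exists since $\xi_1\neq\xi_2$); slimness of the triangle then forces $\alpha_1(D)$ to lie within $4N(3,0)$ of $\beta_n$, so every $\beta_n$ meets the fixed ball $\overline{B(\alpha_1(D),4N(3,0))}$. This is a one-line observation. Your route instead locates, via the intermediate value theorem, a point $z_n\in\beta_n$ simultaneously close to both rays, and then appeals to Lemma~\ref{lem:twodelta} to argue that such a point cannot occur far out along $\alpha_1$. This works, but it is heavier machinery for the same conclusion: Lemma~\ref{lem:twodelta} is itself proved via slimness, so you are essentially using slimness twice where once suffices. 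On the other hand, your treatment of the endpoints at infinity (the $(1,2(S+4N(3,0)))$-quasi-geodesic concatenation and Lemma~\ref{near morse means hausdorff close}) is more explicit than the paper's ``by a standard argument,'' which is a nice touch.
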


\begin{proof} Choose a basepoint $p \in X$ and choose $N$-Morse geodesic rays $\alpha_1, \alpha_2 \colon [0,\infty) \to X$ with $\alpha_1(0)=\alpha_2(0)=p$ and $\alpha_1(\infty)= \xi_1$ and $\alpha_2(\infty) = \xi_2$.

Let $D \in \N$ be such that the distance from $\alpha_1(D)$ to the image of $\alpha_2$ is greater than $4N(3,0)$. For each $n > D$ consider a geodesic segment $\beta_n \colon [0, a] \to X$ with $\beta_n(0)=\alpha_1(n)$ and $\beta_n(a)=\alpha_2(n)$. 

By Lemma \ref{lem:triangle slim} the geodesic triangle $\alpha_1([0,n]) \cup \beta_n \cup \alpha_2([0,n])$ is $4N(3,0)$ slim. Thus $\beta_n$ must intersect a compact ball of radius $4N(3,0)$ at $\alpha_1(D)$ at a point $b_n \in \beta_n$.  By Corollary \ref{cor:AA2} there is a subsequence of $\{\beta_n\}$ which converges to a bi-infinite geodesic $\beta$. Since each $\beta_n$ is in the $4N(3,0)$ neighborhood of the images of $\alpha_1, \alpha_2$, then so is $\beta$. By a standard argument, the endpoints of $\beta$ are $\xi_1$ and $\xi_2$. 

To get the Morse conclusion we note by Lemma \ref{lem:third edge N'} that each $\beta_n$ is $N'$-Morse where $N'$ depends on $N$. Therefore by Lemma \ref{M-Morse uniformly converge to M-Morse} $\beta$ is $N'$-Morse.

\end{proof}

\begin{prop}  If $X$ is a proper geodesic space, then for any Morse gauge $N$, the $N$-Morse boundary, $\Bm{} X$, is compact.
\end{prop}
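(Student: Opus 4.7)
The plan is to realize $\Bm{} X$ as the continuous image of a compact metric space, thereby sidestepping any subtleties about sequential compactness versus compactness in the direct-limit topology. Fix a basepoint $p \in X$ and let $\mathcal{R}_p^N$ denote the set of $N$-Morse unit-speed geodesic rays $\alpha \colon [0,\infty) \to X$ with $\alpha(0) = p$, topologized by uniform convergence on compact sets.

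First, I would show $\mathcal{R}_p^N$ is compact. The ambient space of all unit-speed geodesic rays from $p$ is metrizable, for instance by
\[
d(\alpha,\beta) = \sum_{n \geq 1} 2^{-n} \min\bigl\{1,\ \sup\nolimits_{t \in [0,n]} d_X(\alpha(t),\beta(t))\bigr\},
\]
and it is sequentially compact by the first Arzel\`a--Ascoli corollary of Section~1.1, since $X$ is proper and these rays are $1$-Lipschitz from the common start $p$. Hence this ambient space is compact, and by Lemma~\ref{M-Morse uniformly converge to M-Morse} the subset $\mathcal{R}_p^N$ is closed in it, hence also compact.

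Second, the tautological map $q \colon \mathcal{R}_p^N \to \Bm{} X$, $\alpha \mapsto [\alpha]$, is surjective by definition of $\Bm{} X$, and sequentially continuous essentially by construction of the target topology: if $\alpha_n \to \alpha$ uniformly on compact sets in $\mathcal{R}_p^N$, then every subsequence of $(\alpha_n)$ still converges uniformly on compact sets to the representative $\alpha$ of $[\alpha]$, so by definition $[\alpha_n] \to [\alpha]$ in $\Bm{} X$. Since $\mathcal{R}_p^N$ is metrizable, sequential continuity coincides with continuity. The continuous image of a compact space is compact, so $\Bm{} X = q(\mathcal{R}_p^N)$ is compact.

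The only delicate step is the continuity of $q$, which reduces to matching the sequential definition of convergence in $\Bm{} X$ with uniform-on-compacts convergence in $\mathcal{R}_p^N$; this matching is essentially by construction of the topology. All other ingredients --- properness of $X$, Arzel\`a--Ascoli, and the fact that uniform-on-compacts limits of $N$-Morse rays are $N$-Morse --- are already in hand from the preceding sections.
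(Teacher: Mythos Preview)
Your proof is correct and takes a somewhat different route from the paper. The paper's argument observes that $\Bm{} X$ is first countable (via the countable neighborhood bases $\{V_n(\alpha)\}$) and then asserts that it therefore suffices to check sequential compactness, which in turn follows from Arzel\`a--Ascoli together with the lemma that uniform-on-compacts limits of $N$-Morse rays are $N$-Morse. You instead exhibit $\Bm{} X$ directly as the continuous image of the compact metric space $\mathcal{R}_p^N$ under the tautological map $q$.

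Both proofs rest on the same three ingredients---properness of $X$, Arzel\`a--Ascoli, and closure of the $N$-Morse condition under limits---but package them differently. Your route buys a cleaner logical structure: the implication ``first countable and sequentially compact $\Rightarrow$ compact'' is not valid for arbitrary topological spaces (the ordinal space $\omega_1$ is a standard counterexample), so the paper's reduction tacitly requires further justification. By contrast, once you verify that $q$ is continuous---which, as you correctly note, amounts to observing that closed sets in $\Bm{} X$ are \emph{by definition} those closed under the given sequential convergence, and that $q$ carries convergent sequences in the metrizable domain to sequentially convergent ones in the target---compactness of $\Bm{} X$ follows immediately from compactness of $\mathcal{R}_p^N$. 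The paper's version is terser; yours is more self-contained and sidesteps the point-set subtlety you flagged at the outset.
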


\begin{proof} For any Morse gauge $N$, $\Bm{} X$ is closed by Lemma \ref{M-Morse uniformly converge to M-Morse}. We note $\Bm{} X$ is first countable by the definition of the topology by countable neighborhood bases. It suffices to show that $\Bm{} X$ is sequentially compact, but this follows from Arzel\`a-Ascoli.
\end{proof}

\begin{rem} By the preceding proposition we know that $\Bm{} X$ is compact for all Morse functions $N$, but unlike in the case of the contracting boundary it does not follow that $\bm X$ is $\sigma$-compact. It is unknown whether you can define the same boundary as a direct limit over a countable subset of $\mathcal{M}$.
\end{rem}

\section{Morse preserving maps and application to Teichm\"uller space and the mapping class group}

\subsection{Morse preserving maps} \label{subsec:mpmaps}

\begin{defn} Let $X$ and $Y$ be proper geodesic metric spaces and $p \in X, p' \in Y$. We say that $\Omega \colon \bm X_p \to \bm Y_{p'}$ is \emph{Morse preserving} if given $N \in \mathcal{M}$ there exists an $N'\in \mathcal{M}$ such that $\Omega$ injectively maps $\Bm{}X_p \to \partial_M^{N'} Y_{p'}$.
\end{defn}

Proposition \ref{quasi-isometry invariance} shows that all quasi-isometries induce Morse preserving maps. Quasi-isometric embeddings, on the other hand, will not always induce Morse preserving maps. Consider the space $X$ formed by gluing a Euclidean half-plane to a bi-infinite geodesic $\gamma$ in the hyperbolic plane, $\Hyp$. The obvious embedding $\iota \colon \Hyp \hookrightarrow X$ is an isometric embedding, but $\iota(\gamma)$ is not Morse. 

Sometimes a quasi-isometric embedding $f\colon X \to Y$ does induce a Morse preserving map, i.e., given $N \in \mathcal{M}$ there exists an $N'\in \mathcal{M}$ such that for every $N$-Morse geodesic ray $\gamma\colon [0, \infty) \to X$ there exists an $N'$-Morse ray with basepoint $f(\gamma(0))$ bounded Hausdorff distance from $f(\gamma)$. (Equivalently, given $N \in \mathcal{M}$ there exists an $N'\in \mathcal{M}$ such that for every $N$-Morse geodesic ray $\gamma$, $f(\gamma)$ is an $N'$-Morse quasi-geodesic.) For example, if $X\subset Y$ is stable as defined by Durham--Taylor in \cite{durham:2015aa}, then the associated quasi-isometric embedding $X \to Y$ is always Morse preserving.


If $f \colon X \to Y$ is a quasi-isometric embedding that induces a Morse preserving map $\bm f$, we see that $\bm f$ mirrors the map used in Proposition \ref{quasi-isometry invariance} showing the quasi-isometry invariance of the Morse boundary. This suggests that $\bm X$ might be topologically embedded in $\bm Y$. The fact that $f$  \emph{uniformly} maps $N$-Morse rays close to $N'$-Morse rays is enough for $\bm f$ to be an injective continuous map. We wish to have a topological embedding, so we need to show that this map is open. As $f$ is only a quasi-isometric embedding, $f$ has no quasi-inverse, so we cannot exactly follow the proof of Proposition \ref{quasi-isometry invariance}. Nevertheless, in Proposition \ref{morse-preserving means embedding} we construct an inverse to $\bm f$ show this map is continuous.

\begin{prop} \label{morse-preserving means embedding} If $f\colon X \to Y$ is a quasi-isometric embedding that induces a Morse preserving map, then $$\bm f\colon \bm X \to \bm f(\bm X)$$ is a homeomorphism, i.e., $\bm f$ is a topological embedding.
\end{prop}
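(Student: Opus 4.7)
The plan is to observe that $\bm f$ is already a continuous injection by the Morse preserving hypothesis, and then establish continuity of the inverse $\bm f^{-1}\colon \bm f(\bm X) \to \bm X$. The main new ingredient, beyond what was needed in Proposition \ref{quasi-isometry invariance}, is a ``reverse Morse'' observation exploiting the lower quasi-isometric inequality on $f$; this combines with compactness of the $\partial_M^N X$ to give the inverse.

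First I would note that each restriction $\bm f_N\colon \partial_M^N X \to \partial_M^{g(N)} Y$ is a continuous injection by Morse preservation together with the argument of Proposition \ref{quasi-isometry invariance}: the existence of a quasi-inverse was only used there for surjectivity, not for continuity. By Remark \ref{rem:direct lim}, the family $\{\bm f_N\}$ assembles into a continuous injection $\bm f\colon\bm X \to \bm Y$.

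The central step is to show that for every Morse gauge $N'$ there is a Morse gauge $N$ with $\bm f^{-1}\bigl(\bm f(\bm X) \cap \partial_M^{N'} Y\bigr) \subseteq \partial_M^N X$. Given $\alpha$ with $f\circ\alpha$ at bounded Hausdorff distance from an $N'$-Morse ray $\beta$ in $Y$, and any $(\mu,\nu)$-quasi-geodesic $\sigma$ in $X$ with endpoints on $\alpha$, the image $f\circ\sigma$ is a quasi-geodesic with endpoints near $\beta$; after adjusting endpoints onto $\beta$, Lemma \ref{near morse means hausdorff close} places $f\circ\sigma$ in a uniform neighborhood of $\beta$, and hence of $f\circ\alpha$. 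The lower quasi-isometric bound then pulls this back to say $\sigma$ lies in a uniform neighborhood of $\alpha$, with constants depending only on $N'$, $(\lambda,\epsilon)$, and $(\mu,\nu)$, which is exactly the Morse condition on $\alpha$ for a suitable $N$.

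With these pieces in place, openness of $\bm f$ onto its image is a standard compactness argument. For $U\subseteq\bm X$ open and $N'$ fixed, choose $N$ as above, enlarging so that $g(N) \geq N'$. Then $\bm f_N\colon \partial_M^N X \to \partial_M^{g(N)} Y$ is a continuous injection from a compact space (compactness of $\partial_M^N X$ was shown in the previous section) into the Hausdorff space $\partial_M^{g(N)} Y$ (Hausdorffness is immediate from the neighborhood basis $\{V_n(\alpha)\}$, since distinct classes must eventually separate past $2\delta_{g(N)}$), hence a topological embedding. Therefore $\bm f_N(U \cap \partial_M^N X)$ is open in its image, and by the previous paragraph $\bm f(U) \cap \partial_M^{N'} Y = \bm f_N(U\cap\partial_M^N X) \cap \partial_M^{N'} Y$, which is therefore open in $\bm f(\bm X) \cap \partial_M^{N'} Y$. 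Since $N'$ was arbitrary, $\bm f(U)$ is open in $\bm f(\bm X)$. The principal difficulty is the reverse Morse step: the Morse preserving hypothesis controls only the forward direction, and the lower quasi-isometric inequality is what makes the preimage of each level set land inside a single $\partial_M^N X$.
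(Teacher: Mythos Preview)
Your route is genuinely different from the paper's. The paper does not use compactness of $\partial_M^N X$ at all; instead it builds an explicit continuous inverse by observing that for every $\eta\geq 0$ the map $f_\eta\colon X\to\mathcal N_\eta(f(X))$ is a full quasi-isometry, takes a quasi-inverse $h_\eta$, and then argues (``as in the proof of Lemma~\ref{straighten quasi-geodesics}'') that for each gauge $N$ there is an $\eta$ and an $N'$ with $\bm f(\bm X)\cap\partial_M^N Y\subset\partial_M^{N'}\mathcal N_\eta(f(X))$, so that $\bm h_\eta$ supplies the continuous inverse stratum by stratum. Your compact--Hausdorff argument is a clean alternative and, once the ``reverse Morse'' inclusion is in hand, the passage from level-wise openness to openness in the subspace topology goes through (since $\bm f(\bm X)\cap\partial_M^{N'}Y$ is then compact, hence closed, in $\partial_M^{N'}Y$, so its complement gives the open set you need in the direct limit).

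The gap is in your reverse Morse step. You write that after ``adjusting endpoints onto $\beta$'' the image $f\circ\sigma$ lies in a \emph{uniform} neighborhood of $\beta$. But the adjustment length is exactly $d\bigl(f(\alpha(a)),\beta\bigr)$ and $d\bigl(f(\alpha(b)),\beta\bigr)$, i.e.\ the Hausdorff distance between $f\circ\alpha$ and $\beta$, and nothing you have said bounds this in terms of $N',\lambda,\epsilon$ alone. The hypothesis only gives that $f\circ\alpha$ and $\beta$ are asymptotic, with some bound $K$ that a priori depends on $\alpha$; extending $f\circ\alpha|_{[0,T]}$ by a segment of length $D_T=d(\gamma(T),\beta)$ yields a $(\lambda,\epsilon+2D_T)$-quasi-geodesic with endpoints on $\beta$, and the Morse property then gives only $D_t\leq N'(\lambda,\epsilon+2D_T)$ for $t\leq T$, hence $D^*\leq N'(\lambda,\epsilon+2D^*)$, which does not bound $D^*$ without growth restrictions on $N'$. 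Consequently the quasi-geodesic constants of the adjusted $f\circ\sigma$, and hence your $N$, depend on $\alpha$, and the containment $\bm f^{-1}\bigl(\bm f(\bm X)\cap\partial_M^{N'}Y\bigr)\subseteq\partial_M^N X$ is not established. This is precisely what the paper's $\mathcal N_\eta(f(X))$ maneuver is designed to finesse: rather than bounding the distance from $f\circ\alpha$ to $\beta$ directly, one pulls the $N$-Morse ray back through the honest quasi-isometry $f_\eta$ and invokes Proposition~\ref{quasi-isometry invariance}. If you can supply a direct uniform bound on $d_{\mathrm{Haus}}(f\circ\alpha,\beta)$ in terms of $N',\lambda,\epsilon$, your argument goes through and is arguably cleaner; as written, that bound is the missing ingredient.
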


\begin{proof} 
By definition of Morse preserving, we know that for every $N \in \mathcal{M}$ there exists an $N'\in \mathcal{M}$ such that for every $N$-Morse geodesic ray $\gamma\colon [0, \infty) \to X$ there exists a $N'$-Morse ray with basepoint $f(\gamma(0))$ bounded Hausdorff distance from $f(\gamma)$. We define the map $$\bm f \colon \bm X \to \bm Y$$ as follows: if $\gamma$ is an $N$-Morse geodesic ray, then $\bm f(\gamma)$ is the $N'$-Morse geodesic ray bounded Hausdorff distance from $f(\gamma)$. Thus,  $f$ induces an injective map $\bm f_N \colon  \Bm{} X \to \Bmg{} Y$ where $g$ is direction preserving. We give $\bm f(\bm X)$ the subspace topology inherited from the topology on $\bm Y$.  To show that $\bm f$ is continuous, by Remark \ref{rem:direct lim} we need only show $\bm f_N \colon  \Bm{} X \to \Bmg{} Y$ is continuous. This follows with slight modification of the proof of Proposition \ref{quasi-isometry invariance} by intersecting the appropriate basis neighborhoods with $\bm f(X)$. 

Since $\bm f$ is injective it has a (set-theoretic) inverse $h \colon \bm f (\bm X) \to \bm X$. So, to show the result, we need only show that $h \colon \bm f(\bm X) \cap \Bm{}Y \to \Bmprime X$ is continuous. We know that for any $\eta \geq 0$ we have a quasi-isometry $$f_{\eta} \colon X \to \mathcal{N}_\eta(f(X)).$$ Let $h_\eta \colon \mathcal{N}_\eta(f(X)) \to X$ be a quasi-inverse. As in the proof of Lemma \ref{straighten quasi-geodesics}, we know that given a Morse function $N$, there exists $\eta$ which depends on $\lambda, \epsilon$ and an $N'$ such that $\bm f(\bm X) \cap \Bm{} Y \subset \Bm{'} \mathcal{N}_\eta(f(X))$. Thus, we get a map  $\bm h_\eta \colon \bm f(\bm X) \cap \Bm{}Y \to \Bmprime X$. Again, with slight modification of the proof of Proposition \ref{quasi-isometry invariance} we note that $\bm h_\eta$ is continuous. We notice that for any $\alpha \in \bm X$, $h_\eta \circ f(\alpha)$ lies bounded distance from $\alpha$, so $\bm h_\eta(\bm f (\alpha)) = \alpha$. That is, $\bm h_\eta=h$ on $\bm f (\bm X) \cap \Bm{}Y$. We conclude that $\bm f$ is a topological embedding.
\end{proof}

\subsection{Application to Teichm\"uller space}

We now describe the Morse boundary of Teichm\"uller space with the Teichm\"uller metric. 

Let $S$ be a surface of finite type. We know from Theorem 4.2 in  \cite{minsky:1996aa} that if $\gamma$ is a geodesic contained in the $\epsilon$-thick part of Teichm\"uller space, $\thick$, that any $(K,L)$-quasi-geodesic with endpoints on $\gamma$ remains in an $N(K, L, \epsilon)$-neighborhood of $\gamma$.  By Theorem 5.2 of \cite{minsky:1996aa} we know that if a geodesic is not contained in $\thick$ for any $\epsilon>0$ then it does not have the Morse property.  It follows that given a Morse function $N$, there exists $\epsilon$ and a Morse function $N'$ such that \begin{equation}\label{bmteich} \Bm{} \teich \subset \partial \thick \cap \bm \teich \subset \Bmprime \teich \end{equation} where $\partial \thick$ is the set of geodesic rays in $\teich$ (up to asymptotic equivalence) whose image lies in $\thick$. By abuse of notation set $\bm \thick \colon= \partial \thick \cap \bm \teich$. Then (\ref{bmteich}) implies that $$\bm \teich= \varinjlim_\epsilon \bm \thick.$$

The main theorem of Leininger and Schleimer in \cite{leininger:2014aa} states that for all $n \in \N$, there exists a surface $S$ of finite type and a quasi-isometric embedding $$\Omega \colon \Hyp^n \to \teich.$$ Moreover, the image is quasi-convex and lies in $\thick$ for some $\epsilon >0$. We show that this map is Morse preserving.

\begin{prop} \label{teich Morse-preserving} There exists an $N \in \mathcal{M}$ such that if $\alpha \colon [0, \infty) \to \Hyp^n$ is any geodesic ray with basepoint $p \in \Hyp^n$, then the quasi-geodesic $\Omega(\alpha) \colon [0, \infty) \to \teich$ is bounded Hausdorff distance from an $N$-Morse geodesic ray $\beta$ where $\beta(0)=\Omega(p)$, i.e., $\Omega$ is Morse preserving.
\end{prop}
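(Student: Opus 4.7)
Fix the $(\lambda,\epsilon_0)$-quasi-isometric embedding $\Omega\colon\Hyp^n\to\teich$ whose image is quasi-convex and contained in $\thick$ for some $\epsilon>0$. Let $\alpha\colon[0,\infty)\to\Hyp^n$ be a geodesic ray with $\alpha(0)=p$, so $\Omega\circ\alpha$ is a $(\lambda,\epsilon_0)$-quasi-geodesic ray in $\teich$. The plan is to straighten $\Omega\circ\alpha$ exactly as in Lemma~\ref{straighten quasi-geodesics}, but rather than appealing to Lemma~2.5(2) of \cite{charney:2015aa} (which requires pulling quasi-geodesics back through a quasi-inverse that we do not have), I will use the geometry of the thick part to get the Morse conclusion directly.

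First I would set $\beta_n$ to be a geodesic in $\teich$ from $\Omega(p)$ to $\Omega(\alpha(n))$. By the quasi-convexity of $\Omega(\Hyp^n)$, there is a constant $C$ depending only on $\Omega$ such that $\beta_n\subset\Ncal_C(\Omega(\Hyp^n))\subset\Ncal_C(\thick)$ for every $n$. Because $|\log\ell_\gamma|$ is $2$-Lipschitz with respect to the Teichm\"uller metric (Wolpert), a $C$-neighborhood of $\thick$ is contained in $\mathcal{T}_{\epsilon''}(S)$ for some $\epsilon''>0$ depending only on $C$ and $\epsilon$; hence every $\beta_n$ lies in the uniform thick part $\mathcal{T}_{\epsilon''}(S)$. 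Minsky's theorem (Theorem~4.2 in \cite{minsky:1996aa}) now applies: there is a Morse gauge $N$, depending only on $\epsilon''$ and hence only on $\Omega$, such that every $\beta_n$ is $N$-Morse.

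Next, by Arzel\`a-Ascoli a subsequence of $\{\beta_n\}$ converges uniformly on compact sets to a geodesic ray $\beta\colon[0,\infty)\to\teich$ with $\beta(0)=\Omega(p)$. The thick part $\mathcal{T}_{\epsilon''}(S)$ is closed, so $\beta\subset\mathcal{T}_{\epsilon''}(S)$, and by Lemma~\ref{M-Morse uniformly converge to M-Morse} the limit $\beta$ is also $N$-Morse with the same gauge $N$. To finish, I would check that $\beta$ and $\Omega\circ\alpha$ have bounded Hausdorff distance: for each $n$, the arc $\Omega\circ\alpha|_{[0,n]}$ is a $(\lambda,\epsilon_0)$-quasi-geodesic with endpoints on the $N$-Morse geodesic $\beta_n$, so Lemma~\ref{near morse means hausdorff close} gives a Hausdorff bound depending only on $\lambda,\epsilon_0,N$; passing to the limit yields the same bound between $\beta$ and $\Omega\circ\alpha$.

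\textbf{Main obstacle.} Everything hinges on the uniform thickness step: that the straightening $\beta_n$ stays in a single $\mathcal{T}_{\epsilon''}(S)$ with $\epsilon''$ independent of $\alpha$ and $n$. This is precisely where quasi-convexity of $\Omega(\Hyp^n)$ (from Leininger-Schleimer) together with the logarithmic Lipschitz control of systoles by Teichm\"uller distance is essential; without it, the straightening could dip into arbitrarily thin regions of $\teich$ and Minsky's Morse estimate would degrade, destroying the uniformity of $N$ needed for the Morse-preserving conclusion.
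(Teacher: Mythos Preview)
Your proposal is correct and follows essentially the same approach as the paper: straighten $\Omega\circ\alpha$ by geodesic segments $\beta_n$, use quasi-convexity of $\Omega(\Hyp^n)$ to keep the $\beta_n$ in a uniform thick part, apply Minsky's Theorem~4.2 to get a uniform Morse gauge, extract a limit via Arzel\`a--Ascoli, and bound the Hausdorff distance via Lemma~\ref{near morse means hausdorff close}. The only cosmetic difference is that the paper cites Hamenst\"adt (Corollary~2.7 of \cite{Hamenstadt:2010aa}) for the step $\mathcal{N}_C(\thick)\subset\mathcal{T}_{\epsilon''}(S)$, whereas you invoke Wolpert's inequality directly; these are the same argument.
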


\begin{proof} 
Say $\Omega \colon \Hyp^n \to \teich$ is a $(K,L)$-quasi-isometric embedding and $\Omega(\Hyp^n)$ is $C$-quasi-convex and lies in $\thick$. Then, as in the proof of Corollary 2.7 in \cite{Hamenstadt:2010aa}  $\mathcal{N}_C(\Omega(\Hyp^n)) \subset \thickprime$ for some $\epsilon>\epsilon'$. 

Let $\alpha$ be a geodesic in $\Hyp^n$, and $\{\beta_n\}_{n \in \N}$ be a sequence of geodesics joining $\Omega(p)$ and $\Omega(\alpha(n))$. Then by quasi-convexity of $\Omega(\Hyp^n)$, each $\beta_n \subset \thickprime$ and thus by Theorem 4.2 in  \cite{minsky:1996aa} we know that all the $\beta_n$ are $N$-Morse where the Morse function $N$ depends on $\epsilon'$. In particular, by Lemma \ref{near morse means hausdorff close} we know $\beta_n$ and $\Omega(\alpha)|_{[0,n]}$ have bounded Hausdorff distance $D=2N(K,2(K+L))+(K+L)$. Thus by Arzel\`a-Ascoli there exists a subsequence $\beta_{n(i)}$ that converges to an $N$-Morse geodesic ray $\beta$ that is at most Hausdorff distance $D$ from $\Omega(\alpha)$.
\end{proof}

\begin{cor} For any $n \geq 2$, there exists a surface of finite type $S$ such that $\bm \teich$ contains a topologically embedded $S^{n-1}$.
\end{cor}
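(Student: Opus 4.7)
The plan is to assemble three ingredients already set up in the paper: the quasi-isometric embedding $\Omega \colon \Hyp^n \to \teich$ from Leininger--Schleimer, Proposition \ref{teich Morse-preserving} (which says this specific $\Omega$ is Morse preserving), and Proposition \ref{morse-preserving means embedding} (which converts Morse preserving quasi-isometric embeddings into topological embeddings of Morse boundaries). Together with the fact that the Morse boundary of $\Hyp^n$ is the Gromov boundary $S^{n-1}$, these slot together cleanly.

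First, I would fix $n \geq 2$ and invoke Leininger--Schleimer to choose a surface $S$ of finite type together with a quasi-isometric embedding $\Omega \colon \Hyp^n \to \teich$ whose image is quasi-convex and lies in $\thick$ for some $\epsilon > 0$. Next, I would apply Proposition \ref{teich Morse-preserving} to conclude that $\Omega$ is Morse preserving: every geodesic ray in $\Hyp^n$ is mapped to a quasi-geodesic lying bounded Hausdorff distance from a uniformly Morse geodesic ray in $\teich$ (with the Morse gauge depending only on $\epsilon'$ and the quasi-isometry constants, not on the ray).

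Then I would apply Proposition \ref{morse-preserving means embedding} directly: the Morse preserving quasi-isometric embedding $\Omega$ induces a topological embedding
\begin{equation*}
\bm \Omega \colon \bm \Hyp^n \hookrightarrow \bm \teich.
\end{equation*}
Since $\Hyp^n$ is a proper geodesic $\delta$-hyperbolic space, the theorem in Section 3 identifying $\bm X$ with the Gromov boundary when $X$ is hyperbolic gives $\bm \Hyp^n = \partial \Hyp^n \cong S^{n-1}$. Composing the canonical homeomorphism $S^{n-1} \to \bm \Hyp^n$ with $\bm \Omega$ produces the desired topological embedding of $S^{n-1}$ into $\bm \teich$.

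There is essentially no obstacle here beyond bookkeeping, since the hard work has been done: Leininger--Schleimer provides the quasi-convex quasi-isometric copy of $\Hyp^n$, Proposition \ref{teich Morse-preserving} upgrades the embedding to a Morse preserving one, and Proposition \ref{morse-preserving means embedding} upgrades Morse preserving quasi-isometric embeddings to topological embeddings of boundaries. The only point to be slightly careful about is that $\bm \Omega$ is defined on the level of equivalence classes, so one should note that asymptotic rays in $\Hyp^n$ map under $\Omega$ to quasi-geodesics whose straightenings (from the proof of Lemma \ref{straighten quasi-geodesics}) are asymptotic in $\teich$; this is immediate from the uniform Hausdorff bounds in Proposition \ref{teich Morse-preserving}, so $\bm \Omega$ is well defined, and the proposition does the rest.
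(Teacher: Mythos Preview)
Your proposal is correct and follows exactly the paper's approach: apply Proposition \ref{teich Morse-preserving} to see that the Leininger--Schleimer embedding is Morse preserving, then apply Proposition \ref{morse-preserving means embedding} to upgrade this to a topological embedding of Morse boundaries, and finally use that $\bm \Hyp^n \cong S^{n-1}$. The paper's own proof is the one-line version of what you wrote.
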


\begin{proof} This follows directly from Propositions \ref{morse-preserving means embedding}  and \ref{teich Morse-preserving} and the fact that $\bm \Hyp^n$ is homeomorphic to $S^{n-1}$.
\end{proof}

\subsection{Morse boundary of the mapping class group}
We now show that the Morse boundary of the mapping class group of a surface $S$ of finite type, $\Mod$, and the Teichm\"uller space of that surface, $\teich$, are homeomorphic.

We assume the reader is familiar with the Masur-Minsky machinery developed in \cite{Masur:2000aa} and the coarse geometry of Teichm\"uller space. We will quickly review the basic notions. The \emph{curve graph}, $\mathcal{C}(S)$, is a locally infinite simplicial graph whose vertices are isotopy classes of essential simple closed curves on $S$, and we join two isotopy classes of curves if there exist representatives of each class which are disjoint. 

A complete clean \emph{marking} on $S$ is a set $\mu=\{(\alpha_1, \beta_1), \ldots, (\alpha_m, \beta_m)\}$ where $\{\alpha_1, \ldots, \alpha_m\}$ is a pants decomposition of $S$, and each $\beta_i$ is disjoint from $\alpha_j$ for $i \neq j$ and intersects $\alpha_i$ once (twice) if the surface filled by $\alpha_i$ and $\beta_i$ is a once-punctured torus (four-times-punctured sphere). We call the $\alpha$ curves the \emph{base} of $\mu$ and for every $i$, $\beta_i$ is called the \emph{transverse curve to $\alpha_i$ in $\mu$}.  The marking graph, $\MC$, is a graph whose vertices are (complete clean) markings and two markings $\mu_1, \mu_2 \in \mathrm{V}(\MC)$ are joined by an edge if they differ by an elementary switch. 

Given a nonannular subsurface $Y \subset S$ and a curve $\alpha \in \mathcal{C}(S)$ we define the \emph{subsurface projection of $\alpha$ to $Y$} to be the subset $\pi_Y(\alpha) \subset \mathcal{C}(Y)$ defined by taking the arcs of the intersection of $\alpha$ with $Y$ and performing surgery on the arcs to obtain closed curves in $Y$. (See \cite{Masur:2000aa} for a more precise definition and the definition in the annular case.) When dealing with markings, one only projects the base of the marking. We define the subsurface projection distance of two marking $\mu_1, \mu_2 \in \MC$ as $d_Y(\mu_1, \mu_2) = \mathrm{diam}_{\mathcal{C}(Y)}(\pi_Y(\mu_1) \cup \pi_Y(\mu_2)).$ We note that if $Y$ is the whole surface $S$, then $d_S$ is the usual distance in $\mathcal{C}(S)$.

One useful property of the marking graph is that it is quasi-isometric to $\Mod$. The quasi-isometry between the two spaces is defined by choosing a marking $\mu \in \mathrm{V}(\MC)$ and considering the mapping class group orbit. Thus, keeping in mind Theorem \ref{quasi-isometry invariance}, for the remainder of the section we will use $\MC$ in place of $\Mod$. 

For each $x \in \teich$ there is a \emph{short marking}, which is constructed inductively by picking the shortest curves for the base and repeating for the transverse curves.

It is a well known fact that there is a coarsely well-defined map $\Upsilon \colon \MC \to \teich$ that is coarsely Lipschitz. The map is defined by taking a marking to the region in the thick part of $\teich$ where the marking is a short marking for the points in that region.

Given a surface $Y$ with compact boundary for which the interior of $Y$ is a surface of genus $g$ with $p$ punctures we define the complexity of $Y$, $\xi(Y)=3g+p$.

We now list a collection of theorems that we will use to construct the homeomorphism between $\bm \teich$ and $\bm \Mod$. Theorem \ref{Rafi} and Theorems \ref{DT} show that geodesics with uniformly bounded subsurface projections are Morse and vice versa. Theorem \ref{comb-dist} (combinatorially) quantifies the relationship between subsurface projections and distances in $\MC$ and $\teich$.

\begin{thm}\label{Rafi} For every $E>0$ there exists an $\epsilon>0$ such that if $\gamma \colon [a,b] \to \teich$ is a Teichm\"uller geodesic and $\mu_a, \mu_b$ are short markings of $\gamma(a)$ and $\gamma(b)$ with $d_Y(\mu_a, \mu_b)<E$ for every proper domain $Y \subset S$ with $\xi(Y) \neq 3$, then $\gamma$ is $\epsilon$-cobounded.

Conversely for every $\epsilon >0$ there exists an $E>0$ such that if $\gamma \colon [a,b] \to \teich$ is an $\epsilon$-cobounded Teichm\"uller geodesic, then for short markings  $\mu_a, \mu_b$ of $\gamma(a)$ and $\gamma(b)$ $d_Y(\mu_a, \mu_b)\leq E$ for every proper domain $Y \subset S$ with $\xi(Y)\neq 3$. 
\end{thm}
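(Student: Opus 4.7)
The plan is to prove both directions by invoking Rafi's machinery linking the flat geometry of the quadratic differential $q$ defining $\gamma$ to the combinatorics of subsurface projections of the endpoint markings. Throughout, recall that $\gamma$ is $\epsilon$-cobounded means no essential simple closed curve on $S$ has extremal length below $\epsilon$ at any $\gamma(t)$; equivalently the hyperbolic systole is bounded below along $\gamma$.

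For the forward direction I argue the contrapositive: if $\gamma$ fails to be $\epsilon$-cobounded, some essential curve $\alpha$ attains extremal length less than $\epsilon$ at some $t_0 \in [a,b]$. I would invoke Rafi's short curve theorem, which characterizes when $\alpha$ becomes short along $\gamma$ by the condition that at least one subsurface projection distance $d_Y(\mu_a, \mu_b)$ exceeds a threshold $M = M(\epsilon)$, with $Y$ ranging over the annulus $A_\alpha$ around $\alpha$ and the components of $S \setminus \alpha$. Each such witness $Y$ is a proper subsurface, and one checks that at least one of them satisfies $\xi(Y) \neq 3$ (in the edge cases, the annular projection $d_\alpha$ itself is the witness). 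Setting $E \leq M(\epsilon)$ then contradicts the hypothesis.

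For the converse, assume $\gamma$ is $\epsilon$-cobounded and fix a proper subsurface $Y$ with $\xi(Y) \neq 3$. The key notion is Rafi's active interval $I_Y \subset [a,b]$: outside $I_Y$ the projection $\pi_Y$ of the short marking $\mu_{\gamma(t)}$ changes only by a universal additive constant, so $d_Y(\mu_a, \mu_b)$ is controlled by the $\Ccal(Y)$-diameter of projections taken along $I_Y$ plus bounded error. Rafi then bounds this diameter by a function of $|I_Y|$ divided by the minimum extremal length of $\partial Y$ over $I_Y$. The $\epsilon$-cobounded hypothesis forces a uniform lower bound on that minimum length, which in turn bounds $|I_Y|$ and hence $d_Y(\mu_a, \mu_b) \leq E(\epsilon)$, depending only on $\epsilon$ and the topology of $S$.

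The main obstacle is that the proof is not self-contained: both directions rest on deep results of Rafi (the short curve theorem and active-interval estimates) whose proofs require delicate analysis of the flat metric induced by $q$, thick-thin decompositions of Riemann surfaces, and the comparison between extremal and hyperbolic length. My plan is to invoke these as black boxes rather than reproduce them. A secondary subtlety is the technical exclusion $\xi(Y) \neq 3$, which arises because the standard curve graph degenerates for such subsurfaces; Rafi handles these by working with modified complexes or by passing to neighboring subsurfaces, and the statement is packaged to sidestep the difficulty.
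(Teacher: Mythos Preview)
The paper's own proof is a single line: ``Implicit in the proof of Theorem~1.5 in \cite{Rafi:2005aa}.'' It does not unpack anything; the theorem is simply attributed to Rafi and treated as a black box. Your proposal therefore does strictly more than the paper does, attempting to sketch the mechanism behind Rafi's result rather than just citing it.

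As a sketch of Rafi's argument your outline is in the right spirit: the forward direction does go through the short-curve theorem (a curve gets short along $\gamma$ only if some subsurface projection involving that curve is large), and the converse does rest on bounding projections via the geometry of the active interval when the geodesic stays thick. A couple of points are imprecise. In the converse direction, the relationship you state between the $\mathcal{C}(Y)$-diameter, $|I_Y|$, and the minimal extremal length of $\partial Y$ is not quite the form of Rafi's estimate; the actual argument uses that coboundedness forces $\partial Y$ never to become short, which directly bounds the length of the active interval (or makes it empty), and outside the active interval the projection is coarsely constant. In the forward direction, your handling of the $\xi(Y)\neq 3$ exclusion is fine in outcome---pairs of pants have trivial curve complex and so never appear as witnesses in Rafi's theorem---but the parenthetical ``in the edge cases, the annular projection $d_\alpha$ itself is the witness'' is a bit vague; the point is simply that the annulus has $\xi=2$ and the non-pants complementary components have $\xi\geq 4$, so any witness automatically avoids $\xi=3$.

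None of this is a genuine gap given that you explicitly invoke Rafi's results as black boxes, which is exactly what the paper does. If anything, your write-up is closer to an actual proof sketch than the paper's one-line citation.
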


\begin{proof} Implicit in the proof of Theorem 1.5 in \cite{Rafi:2005aa}\end{proof}

\begin{thm} \label{DT} \label{heir-paths} Let $\gamma \colon [a,b] \to \MC$ be an $N$-Morse geodesic then there exists an $E>0$ depending on $N$ such that for any two markings $\mu_1, \mu_2$ on the image of $\gamma$, $d_Y(\mu_1, \mu_2)<E$ for every proper domain $Y \subset S$ with $\xi(Y)\neq 3$.

Conversely if $\gamma \colon [a,b] \to \MC$ is a geodesic with $\gamma(a)= \mu_1$ and $\gamma(b)= \mu_2$ such that there exists an $E >0$ so that $d_Y(\mu_1, \mu_2)<E$ for every proper domain $Y \subset S$ with $\xi(Y)\neq 3$ then $\gamma$ is an $N$-Morse where $N$ depends on $E$.
\end{thm}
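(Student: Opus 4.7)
The plan is to transfer the Morse property back and forth between $\MC$ and $\teich$ using the coarsely Lipschitz map $\Upsilon\colon\MC\to\teich$ together with the short-marking map $\teich\to\MC$ in the reverse direction, with Theorem \ref{Rafi} bridging between $\epsilon$-coboundedness of Teichm\"uller geodesics and bounded subsurface projections. By passing to the sub-segment of $\gamma$ between $\mu_1$ and $\mu_2$ (which inherits the same Morse gauge) I may assume throughout that $\mu_1,\mu_2$ are the endpoints of $\gamma$, so both halves reduce to statements about endpoints.

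For the converse, I would begin from the hypothesis $d_Y(\mu_1,\mu_2)<E$ for every proper $Y$ with $\xi(Y)\neq 3$ and apply the first half of Theorem \ref{Rafi} to produce $\epsilon=\epsilon(E)$ such that the Teichm\"uller geodesic $\tau$ joining $\Upsilon(\mu_1)$ and $\Upsilon(\mu_2)$ is $\epsilon$-cobounded. Theorem 4.2 of \cite{minsky:1996aa} then gives that $\tau$ is $N_0$-Morse in $\teich$ with $N_0=N_0(\epsilon)$. Pulling $\tau$ back via the short-marking map produces a uniform quasi-geodesic $\gamma_0$ in $\MC$ from near $\mu_1$ to near $\mu_2$, and the Arzel\`a--Ascoli straightening argument of Lemma \ref{straighten quasi-geodesics} converts $\gamma_0$ into a Morse quasi-geodesic with gauge depending only on $N_0$ and the quasi-isometry constants. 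Finally, since $\gamma$ is a geodesic whose endpoints lie within uniformly bounded distance of those of $\gamma_0$, an application of Lemma \ref{near morse means hausdorff close} (with roles exchanged) forces $\gamma$ and $\gamma_0$ to be Hausdorff close, so $\gamma$ inherits a Morse gauge $N=N(E)$.

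For the forward direction, I would argue by contradiction: assume $\gamma$ is $N$-Morse but that for arbitrarily large $D$ there is a proper $Y\subset S$ with $\xi(Y)\neq 3$ and $d_Y(\mu_1,\mu_2)>D$. The Masur--Minsky hierarchy machinery underlying the distance formula of Theorem \ref{comb-dist}, combined with Behrstock's inequality, says that any hierarchy path from $\mu_1$ to $\mu_2$ must traverse a Dehn-twist sub-path about the multicurve $\partial Y$ whose length is comparable to $D$. Because multi-twist subgroups sit quasi-isometrically in $\MC$ as abelian quasi-flats, this twist segment admits many uniformly quasi-geodesic alternatives with the same endpoints whose pairwise Hausdorff distances grow linearly with $D$. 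Splicing each alternative into the hierarchy path yields $(K,L)$-quasi-geodesics from $\mu_1$ to $\mu_2$, with $(K,L)$ independent of $D$, that cannot all lie in a single $N(K,L)$-neighborhood of $\gamma$. This contradicts the Morse property once $D$ is large, forcing a uniform bound $E=E(N)$.

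The main obstacle is producing and controlling these alternative quasi-geodesics in the forward direction; the cleanest route is to invoke the hierarchy-path technology directly, and in practice this is the combinatorial content of the Durham--Taylor characterization of stable subsets of $\MC$ in \cite{durham:2015aa}, which may be cited in place of the argument just sketched.
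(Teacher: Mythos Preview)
Your forward direction is fine: the twist-flat detour sketch is the right intuition, and deferring to Durham--Taylor \cite{durham:2015aa} is exactly what the paper does.

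The converse has a real gap. You invoke Lemma \ref{straighten quasi-geodesics} to promote the short-marking image $\gamma_0$ of $\tau$ to a Morse quasi-geodesic in $\MC$, but that lemma requires a quasi-isometry between the ambient spaces, and neither $\Upsilon$ nor the short-marking map is one: $\MC$ and $\teich$ are not quasi-isometric, and $\Upsilon$ is only coarsely Lipschitz (Dehn-twist orbits already show it fails to be a quasi-isometric embedding). Knowing that $\gamma_0$ is a uniform quasi-geodesic---which does follow from the distance formula once projections along $\tau$ are bounded---is not enough to conclude Morseness: an arbitrary competing $(K,L)$-quasi-geodesic $\sigma$ in $\MC$ with endpoints on $\gamma_0$ need not have bounded subsurface projections, so $\Upsilon(\sigma)$ need not be a quasi-geodesic in $\teich$, and the Morseness of $\tau$ gives you no control over it. There is also a circularity hazard: the transfer statements you would naturally reach for (Lemmas \ref{orbit-map-is-quasi-geo} and \ref{teich-shadow-is-quasi}) are proved later \emph{using} Theorem \ref{DT}. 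The paper avoids $\teich$ entirely here: it takes a hierarchy path $H$ from $\mu_1$ to $\mu_2$, cites \cite{durham:2015aa} and \cite{Brock:2011aa} for the fact that bounded proper projections force $H$ to be Morse \emph{directly in} $\MC$, and then uses Lemma \ref{near morse means hausdorff close} together with Lemma 2.5(1) of \cite{charney:2015aa} to pass the Morse gauge from $H$ to the geodesic $\gamma$.
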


\begin{proof} The first statement is implicit in the proof of Theorem 6.3 in \cite{durham:2015aa}. 

To prove the second statement assume $\gamma \colon [a,b] \to \MC$ is a geodesic such that the endpoints have uniformly bounded proper subsurface projections. We know by Corollary 8.3.4 of \cite{Durham:2013aa} that there is a hierarchy path $H \colon [0,D] \to \MC$ with $H(0)= \mu_1$ and $H(D)=\mu_2$ which is $(K,L)$-quasi-geodesic. We know following the logic in Theorem 5.6 in  \cite{durham:2015aa} that $H$ is $N'$-Morse where $N'$ depends on $E$. (This also follows from Theorem 4.4 and the remark after Theorem 4.3 in \cite{Brock:2011aa}.) Thus $\gamma$ is a $(1,0)$-quasi-geodesic with endpoints on $H$ and thus by Lemma \ref{near morse means hausdorff close} and by Lemma 2.5 (1) of \cite{charney:2015aa} $\gamma$ is $N$-Morse where $N$ depends on $N'$.
\end{proof}

Given two functions $f, g$ the notation $f(x) \asymp g(x)$ means there exists constants $K \geq 1, L \geq 0$ such that $\frac{1}{K} g(x) -L \leq f(x) \leq Kg(x)+L$.

\begin{thm}[Combinatorial distance formulae] \label{comb-dist} There is a constant $A_0 > 0$ depending only on $S$, so that for any $A \geq A_0$
\begin{enumerate}
\item \label{mark-comb-dist}  and for any pair of markings $\mu_1, \mu_2 \in \MC$, we have $$d_{\MC}(\mu_1, \mu_2) \asymp \sum_{Y \subset S} [d_Y(\mu_1, \mu_2)]_A$$
where the sum is over all subsurfaces $Y\subset S$ and $[X]_A=X$ if $X \geq A$ and 0 otherwise

\item \label{teich-comb-dist} and for any pair $\sigma_1, \sigma_2 \in \thick$ with $\mu_1$ and $\mu_2$ short markings on $\sigma_1$ and $\sigma_2$, respectively, we have $$d_{\teich}(\sigma_1, \sigma_2) \asymp \sum_{Y} [d_Y(\mu_1, \mu_2)]_A$$ 
\end{enumerate}
where the sums are over all subsurfaces $Y\subset S$ (but for annular surfaces $B\subset S$, the distance $d_B$ is measured in $\mathbb{H}^2$) and $[X]_A=X$ if $X \geq A$ and 0 otherwise.\end{thm}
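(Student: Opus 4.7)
The plan is to derive both parts directly from established results in the subsurface-projection literature, rather than reprove anything from scratch.

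For part (1), I would simply cite the Masur--Minsky distance formula for the marking graph, which is Theorem 6.12 of \cite{Masur:2000aa}. The structural input behind that result is the hierarchy of geodesics in curve graphs of subsurfaces connecting the bases of $\mu_1$ and $\mu_2$: the associated hierarchy path is a quasi-geodesic in $\MC$ whose length is coarsely the sum of the lengths of the component curve-graph geodesics, and by the bounded geodesic image theorem those lengths are coarsely $[d_Y(\mu_1,\mu_2)]_A$ once the threshold $A$ is large enough to kill bounded accidental projections.

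For part (2), I would invoke Rafi's combinatorial distance formula for the Teichm\"uller metric, applied in the thick part. The map $\Upsilon \colon \MC \to \teich$ is coarsely Lipschitz on all of $\MC$, and restricted to the preimage of $\thick$ it admits a coarse inverse, namely the short-marking map $\sigma \mapsto \mu_\sigma$. So part (2) will follow from part (1) once these two maps are shown to be mutual quasi-isometries between the relevant neighborhoods in $\MC$ and in $\thick$; this is established in Rafi's work using the thick--thin decomposition of Teichm\"uller geodesics (compare \cite{Rafi:2005aa}). The annular contribution is measured in $\Hyp^2$ because that is how one correctly encodes twisting around short curves in the Teichm\"uller metric.

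The main subtlety is not the inequalities themselves but matching conventions across references: the precise value of $A_0$, the normalization of annular projections, the definition of a short marking, and the identification of $\Mod$ with $\MC$ via a chosen orbit. These amount to bookkeeping, and no ideas beyond those of \cite{Masur:2000aa} and Rafi are required. The only place where I anticipate any real care is the annular term in part (2), since one must be sure the $\Hyp^2$-distance agrees with the combinatorial projection up to the additive and multiplicative constants implicit in $\asymp$.
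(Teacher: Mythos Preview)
Your approach is essentially the same as the paper's: both treat the theorem as a citation result, invoking Masur--Minsky's Theorem~6.12 for part~(1) and Rafi's distance formula for part~(2). The paper is slightly more precise on part~(2), citing Proposition~A.1 of \cite{dowdall:2014aa} (a refinement of \cite{Rafi:2007aa}) rather than \cite{Rafi:2005aa}; the latter concerns short curves along Teichm\"uller geodesics, not the distance formula itself, so you should adjust the reference.
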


\begin{proof} (\ref{mark-comb-dist}) is Theorem 6.12 in \cite{Masur:2000aa} and (\ref{teich-comb-dist}) 
is Proposition A.1 in \cite{dowdall:2014aa}, a refinement of Rafi's distance formula \cite{Rafi:2007aa},   (see also \cite{Durham:2013aa}).
\end{proof}

\begin{lem} \label{lem-teich-delta-slim} Let $\alpha_1$ and $\alpha_2$ be $N$-Morse geodesic rays in Teichm\"uller space with $\alpha_1(0)=\alpha_2(0)$ and let $x \in \alpha$ and $y \in \beta$. Then any geodesic $\gamma\colon [a,b]\to \teich$ joining $x$ and $y$ is $N'$-Morse where $N'$ depends on $N$.
\end{lem}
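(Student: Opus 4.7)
The plan is to reduce this statement to Lemma \ref{lem:third edge N'}, which was already established in full generality for arbitrary geodesic metric spaces. First, I would note that Teichm\"uller space with the Teichm\"uller metric is a proper geodesic space, so the hypotheses of Lemma \ref{lem:third edge N'} are available without modification.

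Next, let $t_x, t_y \in [0,\infty)$ be the parameters such that $\alpha_1(t_x) = x$ and $\alpha_2(t_y) = y$, and consider the restrictions $\alpha_1|_{[0,t_x]} \colon [0,t_x] \to \teich$ and $\alpha_2|_{[0,t_y]} \colon [0,t_y] \to \teich$. A key observation is that a subsegment of an $N$-Morse geodesic ray is itself $N$-Morse with the same Morse gauge $N$: any $(\lambda,\epsilon)$-quasi-geodesic with endpoints on $\alpha_i|_{[0,t_\cdot]}$ is also a $(\lambda,\epsilon)$-quasi-geodesic with endpoints on $\alpha_i$, and hence lies in $\Ncal_{N(\lambda,\epsilon)}(\alpha_i)$; but since its endpoints sit on the subsegment, this neighborhood can be replaced by the $N(\lambda,\epsilon)$-neighborhood of the subsegment (or, if one prefers, one simply notes the definition applies verbatim to the subsegment).

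Finally, $\gamma \colon [a,b] \to \teich$ is a geodesic joining the endpoints of two $N$-Morse geodesic segments $\alpha_1|_{[0,t_x]}$ and $\alpha_2|_{[0,t_y]}$ which share the basepoint $\alpha_1(0)=\alpha_2(0)$. Applying Lemma \ref{lem:third edge N'} directly produces an $N'$ depending only on $N$ such that $\gamma$ is $N'$-Morse, which is the claim.

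There is essentially no obstacle here: the lemma is really just a convenient restatement of Lemma \ref{lem:third edge N'} tailored to the Teichm\"uller setting, presumably because it will be invoked shortly in the arguments comparing $\bm\MC$ and $\bm\teich$. The only minor point worth recording is that the Morse gauge $N'$ does not depend on the specific choices of $x$ and $y$ (equivalently, of $t_x, t_y$) but only on $N$ itself, which is precisely what Lemma \ref{lem:third edge N'} guarantees.
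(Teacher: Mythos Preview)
Your proposal is correct and matches the paper's own proof, which simply reads ``This follows from Lemma \ref{lem:third edge N'}.'' You have just spelled out the reduction in more detail. One small remark: your justification that a subsegment of an $N$-Morse ray is $N$-Morse with the \emph{same} gauge is a bit glib (the containment $\sigma \subset \Ncal_{N(\lambda,\epsilon)}(\alpha_i)$ does not immediately give $\sigma \subset \Ncal_{N(\lambda,\epsilon)}(\alpha_i|_{[0,t]})$), but the fact is standard and in any case a gauge depending only on $N$ is all that is needed.
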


\begin{proof} This follows from Lemma \ref{lem:third edge N'}.  
\end{proof}

Together the next two lemmas show that the distance between two points on a Morse geodesic in either $\MC$ or $\teich$ is coarsely the distance between the associated markings in the curve graph.

\begin{lem} \label{orbit-map-is-quasi-geo}  
If $\alpha \colon \N \to \MC$ is an $N$-Morse geodesic then $\Upsilon(\alpha) \colon \N \to \teich$ is an $N'$-Morse $(A,B)$-quasi-geodesic where $N',A,B$ depends on $N$.
\end{lem}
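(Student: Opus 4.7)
The strategy has two parts: first establish that $\Upsilon \circ \alpha$ is a quasi-geodesic with constants depending only on $N$, and then upgrade this to the $N'$-Morse conclusion.

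For the quasi-geodesic claim, the upper bound is immediate: since $\Upsilon$ is coarsely Lipschitz, there are universal constants $K_0,L_0$ with $d_\teich(\Upsilon(\alpha(i)),\Upsilon(\alpha(j))) \le K_0|i-j|+L_0$. For the lower bound, apply Theorem \ref{DT} to get $E=E(N)$ such that $d_Y(\alpha(i),\alpha(j)) < E$ for every proper domain $Y$ with $\xi(Y)\neq 3$. Now invoke the combinatorial distance formulas of Theorem \ref{comb-dist} using a threshold $A \geq \max\{A_0,E\}$. With this threshold, every term indexed by a proper subsurface drops out of both sums, so
$$d_\MC(\alpha(i),\alpha(j)) \asymp [d_S(\alpha(i),\alpha(j))]_A \asymp d_\teich(\Upsilon(\alpha(i)),\Upsilon(\alpha(j))),$$
where the implied constants depend only on $N$. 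Since $\alpha$ is a geodesic in $\MC$, the left-hand side equals $|i-j|$, giving a lower bound $d_\teich(\Upsilon(\alpha(i)),\Upsilon(\alpha(j))) \geq \tfrac{1}{A}|i-j|-B$ with $A,B$ depending only on $N$.

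For the Morse property, let $\sigma \colon [c,d]\to \teich$ be an arbitrary $(\lambda,\epsilon)$-quasi-geodesic with endpoints $\sigma(c)=\Upsilon(\alpha(i))$, $\sigma(d)=\Upsilon(\alpha(j))$, and let $\gamma$ be the Teichm\"uller geodesic joining them. By the definition of $\Upsilon$, the markings $\alpha(i),\alpha(j)$ are short at these endpoints, and by the projection bounds above they satisfy the hypothesis of Theorem \ref{Rafi}. That theorem then yields an $\epsilon'=\epsilon'(N)$ such that $\gamma \subset \thickprime$. By Minsky's theorem (as used in the discussion preceding display \eqref{bmteich}), $\gamma$ is $N''$-Morse with $N''$ depending only on $\epsilon'$, hence only on $N$. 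Thus $\sigma \subset \Ncal_{N''(\lambda,\epsilon)}(\gamma)$.

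It remains to transfer this neighborhood conclusion from $\gamma$ to $\Upsilon(\alpha)$. The restriction $\Upsilon(\alpha)\big|_{[i,j]}$ is, by part one, an $(A,B)$-quasi-geodesic with endpoints on the $N''$-Morse geodesic $\gamma$, so Lemma \ref{near morse means hausdorff close} bounds its Hausdorff distance from $\gamma$ by a constant $C=C(N)$. Consequently $\sigma \subset \Ncal_{N''(\lambda,\epsilon)+C}(\Upsilon(\alpha))$, so setting $N'(\lambda,\epsilon)=N''(\lambda,\epsilon)+C$ gives the desired $N'$-Morse gauge depending only on $N$. The main delicate point is ensuring that the markings appearing in Theorem \ref{Rafi} are precisely (the ones coarsely encoded by) the $\alpha(i)$, which is built into the coarse definition of $\Upsilon$; the mild asymmetry between the annular summands in the two distance formulas of Theorem \ref{comb-dist} is absorbed into the implied constants once the threshold $A$ is chosen above $E$.
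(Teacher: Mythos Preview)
Your proof is correct and follows essentially the same route as the paper: bound proper subsurface projections via Theorem \ref{DT}, apply the distance formulas with threshold above $E$ to get the quasi-geodesic estimate $d_\MC \asymp d_S \asymp d_\teich$, invoke Theorem \ref{Rafi} and Minsky to see the Teichm\"uller geodesic between any two image points is thick hence $N''$-Morse, and use Lemma \ref{near morse means hausdorff close} to control the Hausdorff distance between $\Upsilon(\alpha)$ and that geodesic. The only cosmetic difference is that you verify the Morse gauge $N'$ by hand against an arbitrary test quasi-geodesic $\sigma$, whereas the paper simply cites the bounded Hausdorff distance to an $N''$-Morse geodesic and (implicitly) Lemma 2.5(1) of \cite{charney:2015aa}; these amount to the same thing.
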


\begin{proof} 
By Theorem \ref{DT} since $\alpha$ is $N$-Morse  there exists an $E> 0$ such that for all $i, j \in \N$ where $i \neq j$, $d_Y(\alpha(i), \alpha(j))<E$ for every proper domain $Y \subset S$. Let $\Upsilon(\alpha(i)) = \sigma_i$ and let $\alpha(i)=\mu_i$. We consider equations (\ref{mark-comb-dist}) and (\ref{teich-comb-dist}) in Theorem \ref{comb-dist} and choose $A=\max\{A_0,E\}$. Since we have only uniformly bounded \emph{proper} subsurfaces we see that $$d_{\MC}(\mu_i, \mu_j) \asymp d_S(\mu_i, \mu_j) \asymp d_{\teich}(\sigma_i, \sigma_j),$$ i.e., coarsely the distance in the curve graph. We note that the constants in both $\asymp$ only depend on $N$. 

For any two points  $\Upsilon(\mu_i),\Upsilon(\mu_j)$ the geodesic $[\Upsilon(\mu_i),\Upsilon(\mu_j)]$ is contained in $\thick$ for some $\epsilon$ by Theorem \ref{Rafi} and thus is $N''$-Morse where $N''$ depends on $\epsilon$ by \cite[Theorem 4.2]{minsky:1996aa}. Lemma \ref{near morse means hausdorff close} tells us that the Hausdorff distance between $\Upsilon([\mu_i, \mu_j])$ and $[\Upsilon(\mu_i),\Upsilon(\mu_j)]$ is bounded by a constant $C$ where $C$ depends only on $N''$. It follows that $\Upsilon(\alpha)$ is $N'$-Morse where $N'$ depends on $N$.

\end{proof}

\begin{lem} \label{teich-shadow-is-quasi}
Let $\alpha  \colon [0, \infty) \to \teich$ be an $N$-Morse geodesic.  For each $i \in \N$ let $\mu_i$ be a short marking of $\alpha(i)$. Then the map $\gamma \colon \N \to \MC$ defined by $\gamma(i)=\mu_i$ is an $N'$-Morse $(A,B)$-quasi-geodesic where $N', A,B$ depend on $N$.
\end{lem}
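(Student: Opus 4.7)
The plan is to mirror the argument used in Lemma \ref{orbit-map-is-quasi-geo}, transferring information across the Masur--Minsky machinery in the reverse direction. First, since $\alpha$ is $N$-Morse, the contrapositive of the Minsky result cited just before (\ref{bmteich}) tells us that $\alpha$ lies entirely in $\thick$ for some $\epsilon > 0$ depending only on $N$. Feeding this into the second half of Theorem \ref{Rafi}, I obtain a constant $E = E(N)$ such that for all $i, j \in \N$, $d_Y(\mu_i, \mu_j) \leq E$ for every proper subsurface $Y \subset S$ with $\xi(Y) \neq 3$.

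To check that $\gamma$ is a quasi-geodesic, I apply Theorem \ref{comb-dist} with threshold $A = \max\{A_0, E\}$. Every proper subsurface projection falls below the threshold, so both distance formulae collapse to the whole-surface term and yield
\[
d_{\MC}(\mu_i, \mu_j) \;\asymp\; d_{\mathcal{C}(S)}(\mu_i, \mu_j) \;\asymp\; d_{\teich}(\alpha(i), \alpha(j)) \;=\; |i-j|,
\]
with multiplicative and additive constants depending only on $N$. This exhibits $\gamma$ as an $(A', B')$-quasi-geodesic with $A', B'$ depending only on $N$.

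For the Morse property, fix $i < j$ and let $\eta_{ij}$ be a geodesic in $\MC$ from $\mu_i$ to $\mu_j$. The projection bound from the first step together with the second half of Theorem \ref{DT} makes $\eta_{ij}$ an $N''$-Morse geodesic with $N''$ depending only on $N$. Since $\gamma|_{[i,j]}$ is an $(A', B')$-quasi-geodesic with the same endpoints as $\eta_{ij}$, Lemma \ref{near morse means hausdorff close} bounds their Hausdorff distance by a constant $C = C(N)$. Any $(\lambda,\epsilon)$-quasi-geodesic $\sigma$ with endpoints on $\gamma$---necessarily at some $\mu_i,\mu_j$---then has endpoints on $\eta_{ij}$, so by Morseness of $\eta_{ij}$ it lies in the $N''(\lambda,\epsilon)$-neighborhood of $\eta_{ij}$, and hence in the $(N''(\lambda,\epsilon)+C)$-neighborhood of $\gamma$. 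Setting $N'(\lambda,\epsilon) := N''(\lambda,\epsilon) + C$ gives the required Morse gauge.

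The argument presents no serious obstacle---the result is essentially a careful unpacking and reversal of the chain $N \leadsto \epsilon \leadsto E \leadsto N''$. The main piece of bookkeeping is verifying that each constant produced ($\epsilon$, $E$, the distance-formula constants, $N''$, and $C$) depends only on $N$ (and on universal data of $S$), so that the final gauge $N'$ genuinely depends on $N$ alone and not on the indices $i, j$.
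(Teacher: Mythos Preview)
Your argument is correct and follows the paper's proof essentially verbatim: deduce $\epsilon$-thickness from Minsky, extract the projection bound $E$ via Theorem~\ref{Rafi}, collapse both distance formulae with threshold $\max\{A_0,E\}$, and then use Theorem~\ref{DT} plus Lemma~\ref{near morse means hausdorff close} to transfer Morseness from the geodesic $[\mu_i,\mu_j]$ to $\gamma$. The only difference is cosmetic---you spell out the final Morse estimate explicitly, whereas the paper simply asserts it.
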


\begin{proof} 
Since $\alpha$ is $N$-Morse, its image lies in $\thick$ for some $\epsilon>0$ \cite{minsky:1996aa}. By Theorem \ref{Rafi} we know for any $i \neq j$, $d_Y(\mu_i, \mu_j)<E$ for every proper domain $Y \subset S$. As in Lemma \ref{orbit-map-is-quasi-geo} choose $A=\max\{A_0,E\}$. Since we have only uniformly bounded \emph{proper} subsurfaces we see that $$d_{\MC}(\mu_i, \mu_j) \asymp d_S(\mu_i, \mu_j) \asymp d_{\teich}(\alpha(i), \alpha(j)).$$ We note that the constants in both $\asymp$ only depend on $N$. 

For any two points $\gamma(i), \gamma(j)$ by Theorem \ref{DT} we know that any geodesic $[\gamma(i), \gamma(j)]$ is $N''$-Morse where $N''$ depends on $E$. Lemma \ref{near morse means hausdorff close} tells us that the Hausdorff distance between $\gamma([i,j])$ and $[\gamma(i), \gamma(j)]$ is bounded by a constant $C$ where $C$ depends only on $N''$. It follows that $\gamma(\alpha)$ is $N'$-Morse where $N'$ depends on $N$.
\end{proof}

\begin{prop} \label{teich-pmf-bijection} There is a natural bijection between $\bm \MC$ and $\bm \teich$.
\end{prop}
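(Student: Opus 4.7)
The plan is to construct explicit maps $\Phi \colon \bm \MC \to \bm \teich$ and $\Psi \colon \bm \teich \to \bm \MC$ and show they are mutually inverse. For $\Phi$, I would pick a representative $\alpha \colon \N \to \MC$ of $[\alpha] \in \bm \MC$ which is $N$-Morse and based at a marking $\mu$. Lemma \ref{orbit-map-is-quasi-geo} gives that $\Upsilon \circ \alpha$ is an $N'$-Morse quasi-geodesic in $\teich$, and I then run the Arzel\`a-Ascoli straightening argument from the proof of Lemma \ref{straighten quasi-geodesics} on the geodesic segments $[\Upsilon(\mu), \Upsilon(\alpha(n))]$; combined with Lemma \ref{M-Morse uniformly converge to M-Morse}, this yields an $N''$-Morse geodesic ray $\alpha'$ with $\alpha'(0) = \Upsilon(\mu)$ and bounded Hausdorff distance from $\Upsilon \circ \alpha$. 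Set $\Phi([\alpha]) = [\alpha']$. Dually, given an $N$-Morse $\beta \colon [0,\infty) \to \teich$, I take short markings $\mu_i$ of $\beta(i)$; by Lemma \ref{teich-shadow-is-quasi} the resulting sequence is a Morse quasi-geodesic in $\MC$, which I straighten as above to an $N''$-Morse geodesic ray $\beta'$, and set $\Psi([\beta]) = [\beta']$.

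I would then check well-definedness on equivalence classes. If $\alpha_1, \alpha_2 \in [\alpha]$, then they are at bounded distance in $\MC$; since $\Upsilon$ is coarsely Lipschitz, so are $\Upsilon \circ \alpha_1$ and $\Upsilon \circ \alpha_2$ in $\teich$, forcing their straightenings to be asymptotic by Corollary \ref{asymp morse eventually close}. The same argument works for $\Psi$, using the additional fact that short markings at $\teich$-close points of $\thick$ are $\MC$-close (an immediate consequence of Theorem \ref{comb-dist}).

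Finally, I would verify $\Psi \circ \Phi = \mathrm{id}$ and $\Phi \circ \Psi = \mathrm{id}$ using two facts built into the definition of $\Upsilon$: (i) any short marking of $\Upsilon(\mu)$ is uniformly $\MC$-close to $\mu$, and (ii) for any $\sigma \in \thick$ with short marking $\mu$, $\Upsilon(\mu)$ is uniformly $\teich$-close to $\sigma$. Starting from $[\alpha] \in \bm \MC$ with $\alpha' = \Phi([\alpha])$, each $\alpha'(i)$ is $\teich$-close to $\Upsilon(\alpha(j))$ for some $j$ comparable to $i$, so by (i) the short marking $\mu_i'$ at $\alpha'(i)$ is $\MC$-close to $\alpha(j)$, and the straightened sequence $(\mu_i')$ is thus asymptotic to $\alpha$. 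The reverse composition uses (ii) analogously. The main obstacle I anticipate is the bookkeeping in this inverse check: the parameter $i$ on the $\teich$ side only corresponds to some nearby $j$ on the $\MC$ side up to additive error coming from both the quasi-isometry constants and the Hausdorff-distance slippage of straightening. Controlling this carefully via Corollary \ref{asymp morse eventually close} and the parametrized distance estimates for Morse quasi-geodesics (proven in Proposition \ref{morse close}) is what ultimately closes the argument.
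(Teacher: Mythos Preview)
Your construction of the maps $\Phi$ and $\Psi$ is exactly the paper's construction of $f$ and $g$: straighten $\Upsilon\circ\alpha$ via Arzel\`a--Ascoli for one direction, and straighten the short-marking sequence for the other, invoking Lemmas \ref{orbit-map-is-quasi-geo}, \ref{teich-shadow-is-quasi}, and \ref{M-Morse uniformly converge to M-Morse} just as you do.

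The only real difference is in how bijectivity is established. You propose to verify both $\Psi\circ\Phi=\mathrm{id}$ and $\Phi\circ\Psi=\mathrm{id}$ directly, which is correct but (as you note) requires some parametrization bookkeeping. The paper instead checks only $f\circ g=\mathrm{id}$ and then shows $f$ is injective. The injectivity argument is quick: if $f(\alpha_1)=f(\alpha_2)$ then $\Upsilon(\alpha_1)$ and $\Upsilon(\alpha_2)$ are at bounded Hausdorff distance in $\teich$, and the coarse equivalence $d_{\MC}(\mu_i,\mu_j)\asymp d_{\teich}(\Upsilon(\mu_i),\Upsilon(\mu_j))$ established in Lemma \ref{orbit-map-is-quasi-geo} (valid because all proper subsurface projections are uniformly bounded along Morse geodesics) immediately forces $\alpha_1$ and $\alpha_2$ to be at bounded $\MC$-distance. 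This sidesteps the parameter-tracking you anticipate as the main obstacle, so it is a slightly cleaner route; but your symmetric approach via facts (i) and (ii) about $\Upsilon$ and short markings is perfectly sound.
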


\begin{proof} 
Fix a basepoint $p \in \MC$.

We first define a map $f \colon \bm \MC \to \bm \teich$. Let $\alpha\colon \N \to \MC$ be an $N$-Morse geodesic with $\alpha(0)=p$. By Lemma \ref{orbit-map-is-quasi-geo} we know that  $\Upsilon(\alpha) \colon \N \to \teich$ is an $N'$-Morse quasi-geodesic whose quasi-constants and $N'$ depend on $N$. Let $\beta_n$ be a geodesic joining $\Upsilon(\alpha(0))$ and $\Upsilon(\alpha(n))$. By Theorem \ref{Rafi} we know that there exists $\epsilon>0$ such that $\beta_n$ is $\epsilon$-thick for every $n \in \N$ and thus by \cite{minsky:1996aa} the $\beta_n$ are all $N''$-Morse where $N''$ depends on $N$. By Arzel\`a-Ascoli there is a subsequence $\beta_{n(i)}$ that converges to a geodesic ray $\beta$ which is $N''$-Morse by Lemma \ref{M-Morse uniformly converge to M-Morse}. We define $f(\alpha)=\beta$. We note that by Lemma \ref{orbit-map-is-quasi-geo} we know that $\beta$ is bounded Hausdorff distance from $\Upsilon(\alpha)$ where the bound depends only on $N$. 

We now define a map $g \colon \bm \teich \to \bm \MC$. Let $\beta \colon [0, \infty) \to \teich$ be an $N$-Morse geodesic ray with basepoint $\Upsilon(p)$. Following the shortest marking map defined in Lemma \ref{teich-shadow-is-quasi} we get an $N'$-Morse quasi-geodesic $\gamma$ whose quasi-constants and $N'$ depend on $N$. Let $\alpha_n$ be a geodesic joining $\gamma(0)$ and $\gamma(n)$. By Lemma 2.5 (1), (2) of \cite{charney:2015aa} we know that the geodesic, $\alpha_n$ is $N''$-Morse where $N''$ depends on $N$. As above, we use Arzel\`a-Ascoli to extract a subsequence ${\alpha}_{n(i)}$ which converges to an $N''$-Morse geodesic ray $\alpha$. We define $g(\beta)=\alpha$. We can conclude that $\alpha$ is bounded Hausdorff distance from $\gamma$ where the bound depends only on $N$. 

To show that $f$ is a bijection we will show that $f \circ g$ is the identity and $f$ is injective.

We first show $f \circ g$ is the identity. Let $\beta \colon [0, \infty) \to \teich$ be an $N$-Morse geodesic ray with basepoint $\Upsilon(\alpha(0))$. As above, we know that $g(\beta)$ is a geodesic ray $\alpha$ that is bounded Hausdorff distance, from $\gamma$, the image of the shortest marking map. We wish to show that $f(g(\beta))=f(\alpha)$ and $\beta$ have bounded Hausdorff distance.  Since $\Upsilon$ is coarsely Lipschitz, we know that $\Upsilon(\alpha)$ and $\Upsilon(\gamma)$ have bounded Hausdorff distance. By construction $\Upsilon(\gamma)$ and $\beta$ have bounded Hausdorff distance. Also by construction, $f(\alpha)$ and $\Upsilon(\alpha)$ have bounded Hausdorff distance. Putting all these together we get that $f(\alpha)$ and $\beta$ have bounded Hausdorff distance, and we have our result.

We now show $f$ is injective. Let $\alpha_1, \alpha_2 \colon \N \to \MC$ be Morse geodesics such that $f(\alpha_1)=f(\alpha_2)$. Then for some $K>0$ we have $d(f(\alpha_1)(i), f(\alpha_2)(i))< K$ for all $i \in \N$. We know that the proper subsurface projections of $f(\alpha_1)$ and $f(\alpha_2)$ are uniformly bounded because they are both Morse. Thus, the inequality in the proof of Lemma \ref{orbit-map-is-quasi-geo} forces the $d(\alpha_1(i), \alpha_2(i))<KA+AB$ for all $i \in \N$ which means $\alpha_1=\alpha_2$. 
\end{proof}

\begin{thm} Let $S$ be a surface of finite type. Then the Morse boundary of $\teich$ is homeomorphic to the Morse boundary of $\Mod$.
\end{thm}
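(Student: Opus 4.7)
The plan is to upgrade the bijection $f\colon \bm \MC \to \bm \teich$ and its inverse $g\colon \bm \teich \to \bm \MC$ from Proposition \ref{teich-pmf-bijection} to a homeomorphism. By Proposition \ref{prop:basept indep} we may fix compatible basepoints $p \in \MC$ and $\Upsilon(p) \in \teich$, so that $f$ and $g$ preserve basepoints. By Remark \ref{rem:direct lim} it suffices to exhibit direction-preserving maps $h_1, h_2 \colon \mathcal{M} \to \mathcal{M}$ for which the restrictions $f_N \colon \Bm{}\MC \to \partial_M^{h_1(N)} \teich$ and $g_N \colon \Bm{} \teich \to \partial_M^{h_2(N)} \MC$ are continuous; direction-preservation is already implicit in the constructions of $f, g$ via Lemmas \ref{orbit-map-is-quasi-geo} and \ref{teich-shadow-is-quasi}, since the output Morse gauges depend only on the input gauge $N$.

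The argument for continuity of $f_N$ mimics the proof of Proposition \ref{quasi-isometry invariance}. Fix $\alpha \in \Bm{} \MC$ and a basic neighborhood $V_n(f(\alpha))$ of $f(\alpha)$ in $\partial_M^{h_1(N)} \teich$. I claim $f(V_m(\alpha)) \subset V_n(f(\alpha))$ for $m$ sufficiently large. For $\beta \in V_m(\alpha)$, the inequality $d_{\MC}(\alpha(t), \beta(t)) < \delta_N$ holds on $[0,m]$, so by the coarsely Lipschitz property of $\Upsilon$ the points $\Upsilon(\alpha(t))$ and $\Upsilon(\beta(t))$ remain within a uniform constant $C_1(N)$ on $[0,m]$. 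By the construction of $f$ in Proposition \ref{teich-pmf-bijection}, $f(\alpha)$ and $f(\beta)$ are at Hausdorff distance at most $C_2(N)$ from $\Upsilon(\alpha)$ and $\Upsilon(\beta)$ respectively. Since both $f(\alpha)$ and $f(\beta)$ are $h_1(N)$-Morse, by choosing $m$ large we can arrange for some point $f(\beta)(s)$ with $s \gg n$ to lie within a uniform distance of $\mathrm{im}(f(\alpha))$; Lemma \ref{lem:twodelta} then gives $d_{\teich}(f(\alpha)(t), f(\beta)(t)) < \delta_{h_1(N)}$ on $[0,n]$, so $f(\beta) \in V_n(f(\alpha))$.

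The argument for continuity of $g_N$ is entirely symmetric, with the shortest marking map from Lemma \ref{teich-shadow-is-quasi} playing the role of $\Upsilon$: one uses that $g(\beta)$ is at bounded Hausdorff distance from the shortest marking trajectory of $\beta$, and that close Teichm\"uller trajectories produce close short markings at each time. The main obstacle in both directions is that the constructions of $f$ and $g$ pass through Arzel\`a--Ascoli, so one only initially knows Hausdorff closeness (not parameterized closeness) between $f(\alpha)$ and $\Upsilon(\alpha)$, respectively between $g(\beta)$ and its shortest marking image. Lemma \ref{lem:twodelta} is the essential tool that converts Hausdorff closeness of two Morse geodesics with common basepoint into parameterized closeness on arbitrarily long initial segments; this is precisely what is needed to translate a basic neighborhood in the target back to one in the source, and it is the same mechanism used in the proof of Proposition \ref{quasi-isometry invariance}.
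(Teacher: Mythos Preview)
Your treatment of the continuity of $f_N$ is essentially the paper's argument: use that $\Upsilon$ is coarsely Lipschitz and that $f(\alpha)$ is uniformly Hausdorff close to $\Upsilon(\alpha)$, then invoke Lemma \ref{lem:twodelta} exactly as in Proposition \ref{quasi-isometry invariance}.

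The gap is in your continuity argument for $g_N$. You assert that the situation is ``entirely symmetric'' with the shortest marking map playing the role of $\Upsilon$, and in particular that ``close Teichm\"uller trajectories produce close short markings at each time.'' But this is \emph{not} symmetric: $\Upsilon \colon \MC \to \teich$ is globally coarsely Lipschitz, whereas the shortest marking map $\teich \to \MC$ is not. Two nearby points in $\teich$ can have short markings that are far apart in $\MC$ if one of the points is deep in the thin part. So the sentence you are treating as a free symmetry is exactly the step that needs work.

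The paper handles this by invoking Lemma \ref{lem-teich-delta-slim} (i.e.\ Lemma \ref{lem:third edge N'}): if $\beta_1,\beta_2$ are $N$-Morse rays with common basepoint, then for any $i,j$ the geodesic $[\beta_1(i),\beta_2(j)]$ is $O$-Morse with $O=O(N)$, hence lies in $\thick$ for some $\epsilon(N)$ and has uniformly bounded proper subsurface projections by Theorem \ref{Rafi}. Feeding this into the distance formulae (Theorem \ref{comb-dist}) yields constants $A,B$ depending only on $N$ with
\[
d_{\MC}(\mu_1(i),\mu_2(j)) \leq A\cdot d_{\teich}(\beta_1(i),\beta_2(j)) + B,
\]
which is precisely the coarse Lipschitz control you need for the shortest marking images of the two rays. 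Once you have this inequality, the rest of your argument (via Lemma \ref{lem:twodelta}, as in Proposition \ref{quasi-isometry invariance}) goes through. In short: insert Lemma \ref{lem-teich-delta-slim} and the distance formula to justify the one non-symmetric step, and your proof matches the paper's.
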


\begin{proof}

After Proposition \ref{teich-pmf-bijection}, what is left to show is continuity of $f$ and $g$.

The continuity of $f\colon \bm \MC \to \bm \teich$ follows as in the proof of Theorem \ref{quasi-isometry invariance} because the $\Upsilon$ is coarse Lipschitz and the fact that if $\alpha$ is an $N$-Morse ray in $\MC$ that $\Upsilon(\alpha)$ is bounded Hausdorff distance from $f(\alpha)$ and that bound only depends on on $N$. 

We now prove the continuity of $g \colon \bm \teich \to \MC$. Let $\beta_1, \beta_2 \colon [0, \infty) \to \teich$ be two $N$-Morse rays with $\beta_1(0)=\beta_2(0)=p$. By Lemma \ref{lem-teich-delta-slim} we know that for any $i,j \in \N$ the geodesic joining $\beta_1(i)$ and $\beta_2(j)$ is $O$-Morse where $O$ depends on $N$. Let $\mu_1(i)$ and $\mu_2(j)$ be short markings of $\beta_1(i)$ and $\beta_2(j)$  respectively. As in the proof of Lemma \ref{orbit-map-is-quasi-geo}  we see for any $i,j \in \N$ that $d_{\teich}(\beta_1(i), \beta_2(j)) \asymp  d_{\MC}(\mu_1(i), \mu_2(j)).$ In particular there exist $A,B$ depending on $N$ such that $$d_{\MC}(\mu_1(i), \mu_2(j)) \leq A\cdot d_{\teich}(\beta_1(i), \beta_2(j))+B.$$ The continuity of $g \colon \bm \teich \to \MC$ follows as in the proof of Theorem \ref{quasi-isometry invariance} from the inequality above.

Thus we have shown that the Morse boundary of $\teich$ is homeomorphic to the Morse boundary of $\Mod$.

\end{proof}

\subsection{A continuous injective map from $\bm \teich$ to $\pmf$} 
Let $\mathrm{MF}(S)$ be the set of measured foliations on $S$, and let $\pmf$  be the space of projective measured foliations on $S$. The Thurston compactification of Teichm\"uller space is  $\overline{\teich}=\teich \cup \pmf$. Recall that Teichm\"uller's theorem says that for any $x \neq y \in \teich$ there exists a unique pair $(\xi, \eta) \in \pmf \times \pmf$ such that $x$ and $y$ lie on the Teichm\"uller geodesic $\overleftrightarrow{(\xi, \eta)}$. (For more information see \cite{papadopoulos:2007aa}.) In general, for $x \in \teich$ and $\eta \in \pmf$ it is not know if the geodesic ray $\gamma=\overrightarrow{[x, \eta)}$ converges in Thurston's compactification $\overline{\teich}$. In the case when $\eta \in \pmf$ is uniquely ergodic, then a theorem of Masur says $\gamma$ converges to $\eta$ \cite{masur:1982ab}. Masur also proves that the ``endpoints" of Morse geodesic rays in $\teich$ are uniquely ergodic \cite{Masur:1982aa}. Furthermore he shows that when $\eta$ is uniquely ergodic, any two rays with direction $\eta$ in Thurston's compactification are asymptotic \cite{masur:1980aa}. Thus we have a map from $h_\infty \colon \bm \teich \to \pmf$. The next lemma says this map is injective and well-defined.

\begin{lem} Let $\overrightarrow{[x, \xi)}, \overrightarrow{[y, \eta)}$ be two Morse rays in $\teich$. The rays $\overrightarrow{[x, \xi)}, \overrightarrow{[y, \eta)}$ have finite Hausdorff distance in $\teich$ if and only if $\xi = \eta$.
\end{lem}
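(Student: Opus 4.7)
The plan is to prove the two implications separately, with Masur's work on uniquely ergodic directions at the heart of each. In both directions we will use the fact (already invoked in the paragraph preceding the lemma) that a Morse Teichm\"uller ray has uniquely ergodic direction by \cite{Masur:1982aa}, and so by \cite{masur:1982ab} such a ray $\gamma$ converges in Thurston's compactification $\overline{\teich} = \teich \cup \pmf$ to its direction in $\pmf$.

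For the easier direction, $\xi = \eta$ implies finite Hausdorff distance: both rays are Morse, so $\xi$ and $\eta$ are uniquely ergodic, and since $\xi = \eta$, Masur's asymptotic theorem \cite{masur:1980aa} applies directly to give that any two Teichm\"uller rays sharing a common uniquely ergodic direction (even from distinct basepoints) are asymptotic, and in particular have finite Hausdorff distance.

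For the harder direction, finite Hausdorff distance implies $\xi = \eta$: let $\alpha = \overrightarrow{[x,\xi)}$ and $\beta = \overrightarrow{[y,\eta)}$ satisfy $d_H(\alpha,\beta) \leq K < \infty$. Pick any $t_n \to \infty$ and, by the Hausdorff bound, choose $s_n$ with $d_{\teich}(\alpha(t_n), \beta(s_n)) \leq K$; note $s_n \to \infty$. By the convergence result cited above, $\alpha(t_n) \to \xi$ and $\beta(s_n) \to \eta$ in $\overline{\teich}$. Since $\alpha$ is Morse its image lies in $\thick$ for some $\epsilon$, so each $\beta(s_n)$ lies within $K$ of $\thick$ and hence in $\thickprime$ for some uniform $\epsilon'$. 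It then suffices to show that in addition $\beta(s_n) \to \xi$ in $\overline{\teich}$; together with $\beta(s_n) \to \eta$ this forces $\xi = \eta$.

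The main obstacle is this last step: establishing that on the thick part of Teichm\"uller space, bounded Teichm\"uller distance preserves limits in the Thurston compactification. I would establish this via Kerckhoff--Wolpert bounds showing that extremal and hyperbolic lengths change only by a bounded multiplicative factor under bounded Teichm\"uller distance, so that the projectivized length functions used to define the Thurston topology have the same limit along $(\alpha(t_n))$ and $(\beta(s_n))$. A more combinatorial alternative, in the spirit of the preceding subsection, would be to project both rays to the curve graph $\Ccal(S)$ via the shortest-marking map of Lemma \ref{teich-shadow-is-quasi} followed by the base map $\MC \to \Ccal(S)$, observe that the resulting shadows are unparameterized quasi-geodesics of bounded Hausdorff distance in the Gromov-hyperbolic $\Ccal(S)$, hence share a common endpoint in $\partial \Ccal(S) \cong \el$, and invoke Klarreich's theorem identifying $\partial \Ccal(S)$ with uniquely ergodic classes in $\pmf$ to conclude $\xi = \eta$.
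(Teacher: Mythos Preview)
Your backward direction ($\xi = \eta$ implies finite Hausdorff distance) is exactly what the paper does: both invoke Masur's theorem \cite{masur:1980aa} that Teichm\"uller rays sharing a uniquely ergodic vertical foliation are asymptotic. For the forward direction the paper simply cites Lemma~2.4 of Farb--Mosher \cite{farb:2002aa} and is done, whereas you outline a direct argument. Your outline is sound: Wolpert's inequality $e^{-2K} \le \ell_x(c)/\ell_y(c) \le e^{2K}$ (which, incidentally, holds throughout $\teich$ and needs no thick-part hypothesis) shows that any Thurston accumulation point $\mu \in \pmf$ of $(\beta(s_n))$ satisfies $i(\mu,c) \asymp i(\xi,c)$ for every simple closed curve $c$; approximating $\xi$ by weighted simple closed curves and using continuity of the intersection pairing then gives $i(\mu,\xi)=0$, and unique ergodicity of $\xi$ forces $[\mu]=[\xi]$. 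Your curve-graph alternative via Klarreich's theorem would also go through. In effect you are reproving the content of Farb--Mosher's lemma; the paper's route is shorter because it outsources this work, while yours is more self-contained but leaves the crux as a sketch rather than a completed proof.
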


\begin{proof} The forward direction is Lemma 2.4 in \cite{farb:2002aa}. For the other direction we apply the result of Masur mentioned above \cite{masur:1980aa}. \end{proof}

\begin{prop} The injective map $h_\infty \colon  \bm \teich \hookrightarrow \pmf$ is continuous.
\end{prop}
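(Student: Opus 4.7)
The plan is to exploit the universal property of the direct limit topology on $\bm\teich = \varinjlim_{\mathcal{M}} \Bm{}\teich$: it suffices to show, for each Morse gauge $N$, that the restriction $h_\infty|_{\Bm{}\teich} \colon \Bm{}\teich \to \pmf$ is continuous. Fix such an $N$ and a basepoint $p \in \teich$, and write $Q^1(p)$ for the (compact) sphere of unit-area holomorphic quadratic differentials at $p$. Each unit-speed Teichm\"uller ray $\alpha$ based at $p$ is determined by its initial differential $q(\alpha) \in Q^1(p)$, and conversely.

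Two standard facts from Teichm\"uller theory will drive the argument. First, the Teichm\"uller geodesic flow depends continuously on initial data, and $Q^1(p)$ is compact, so by a contradiction argument using compactness plus uniqueness of the geodesic with given initial datum, a sequence of rays $\alpha_n$ based at $p$ converges uniformly on compact sets to a ray $\alpha$ based at $p$ if and only if $q(\alpha_n) \to q(\alpha)$ in $Q^1(p)$. Second, the vertical foliation map $F \colon Q^1(p) \to \pmf$, sending a differential to the projective class of its vertical measured foliation, is continuous. Combined with the Masur theorems already invoked, every $N$-Morse ray $\alpha$ based at $p$ has uniquely ergodic vertical foliation and $\alpha(t) \to F(q(\alpha))$ in $\overline{\teich}$, so $h_\infty(\alpha) = F(q(\alpha))$; hence $h_\infty|_{\Bm{}\teich} = F \circ q(\cdot)$.

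Continuity then follows formally. Suppose $[\alpha_n] \to [\alpha]$ in $\Bm{}\teich$ with $N$-Morse representatives $\alpha_n$ based at $p$. By definition of the topology, every subsequence of $(\alpha_n)$ has a sub-subsequence converging uniformly on compact sets to some $N$-Morse representative of $[\alpha]$. Applying the first fact to each such sub-subsequence, every subsequence of $(q(\alpha_n))$ has a sub-subsequence converging in $Q^1(p)$ to a point of $F^{-1}(h_\infty(\alpha))$. Applying the continuous map $F$, every subsequence of $(h_\infty(\alpha_n))$ has a sub-subsequence converging to $h_\infty(\alpha)$ in $\pmf$. Since $\pmf$ is Hausdorff, this forces $h_\infty(\alpha_n) \to h_\infty(\alpha)$, so $h_\infty|_{\Bm{}\teich}$ is sequentially continuous; first countability of $\Bm{}\teich$ (via the neighborhood basis $\{V_n(\alpha)\}$) upgrades this to continuity.

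The main obstacle I anticipate is cleanly invoking the two Teichm\"uller-theoretic facts above — the identification of uniform-on-compact-sets convergence of rays with $Q^1(p)$-convergence of their initial differentials, and the continuity of the vertical foliation map. Both are classical but lie outside the Morse-theoretic machinery developed earlier in the paper, so the cleanest treatment is probably to cite standard references such as \cite{papadopoulos:2007aa} rather than reprove them. Everything else is a formal manipulation with direct limit topologies and sequential convergence.
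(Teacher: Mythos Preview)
Your argument is correct, and the reduction to fixed $N$ via the direct limit is the same first step the paper takes. From there, however, the two proofs diverge in logical structure. You prove continuity of $h_N$ \emph{directly}: factor $h_N = F \circ q$ through the unit quadratic-differential sphere $Q^1(p)$, identify uniform-on-compacta convergence of rays with convergence in $Q^1(p)$, and push through the continuous vertical-foliation map $F$. The paper instead runs the standard compact--Hausdorff trick: it first shows $h_N(\Bm{}\teich_p)$ is closed in $\pmf$ (hence compact), then proves that $h_N^{-1}$ is continuous (via essentially the same ``convergence in $\pmf$ $\Rightarrow$ convergence of rays'' observation you use in the other direction), and concludes that a continuous bijection between compact Hausdorff spaces is a homeomorphism. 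So the underlying Teichm\"uller-theoretic ingredients are the same --- the correspondence between rays at $p$ and points of $Q^1(p)$, and the continuity of the passage to vertical foliations --- but the paper packages them to get a stronger conclusion (that each $h_N$ is a topological embedding, not merely continuous), while your route is shorter and more transparent if continuity is all one wants. One small remark: the implication ``every subsequence has a sub-subsequence converging to $x$ $\Rightarrow$ the sequence converges to $x$'' holds in any topological space, so you do not actually need to invoke Hausdorffness of $\pmf$ there.
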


\begin{proof} We show this by showing for each Morse gauge $N \in \mathcal{M}$ the map $$h_N \colon \Bm{}\teich \to \pmf$$ is a topological embedding. We follow closely the explication in Facts 3 and 4 in Farb and Mosher's description of the ``limit set" in  \cite{farb:2002aa}. Let $p \in \teich$.

We first show that for each Morse gauge $N$, $\Bm{} \teich_p$ is a closed subset of $\pmf$, and therefore compact. Choose a sequence $[\alpha_i] \in\Bm{} \teich_p$ so that $\lim [\alpha_i] =[\alpha_\infty]$ in $\pmf$. It is enough to show that $[\alpha_\infty] \in \Bm{} \teich_p$. Let $\overrightarrow{[p, \eta_i)}$ be $N$-Morse representatives of  $[\alpha_i]$. By Arzel\`a-Ascoli we can pass to a subsequence that converges to a ray $\lim \overrightarrow{[p, \eta_i)} = \overrightarrow{[p, \eta'_\infty)}$. Since this is a sequence of $N$-Morse rays, we know that $\overrightarrow{[p, \eta'_\infty)} \in \Bm{} \teich_p$. We look at the unit tangent bundle of $\teich$ at the point $p$ and it follows that $\lim \eta_i=\eta'_\infty$, and so we have $\eta_\infty = \eta'_\infty \in  \Bm{} \teich_p$.

We now show $h_N \colon \Bm{}\teich \to \pmf$ is a homeomorphism onto its image. Since both the domain and range are compact Hausdorff spaces, it suffices to prove continuity in one direction. We prove that $h_N^{-1}$ is continuous. This follows from the observation that for a convergent subsequence $\eta_i \to \eta$ in $h_N( \Bm{} \teich_p) \subset \pmf$, the sequence of rays $\overrightarrow{[p, \eta_i)}$ converges in the compact open topology to the ray $\overrightarrow{[p, \eta)}$.
\end{proof}

\appendix

\section{Corrigendum}
\subsection{Introduction}
In Lemma \ref{M-Morse uniformly converge to M-Morse} the first author claimed that if a sequence of $N$-Morse geodesic rays converged uniformly on compact sets to a geodesic ray, then that ray would be $N$-Morse. This does not follow and we provide a counterexample below (Example \ref{ex: counterexample}). This corrigendum will do three things: We first introduce the notion of refined Morse gauge and (quasi)-geodesics and use them to prove a corrected version of Lemma \ref{M-Morse uniformly converge to M-Morse}. We then reprove Corollary \ref{maps continuous} without using Lemma \ref{M-Morse uniformly converge to M-Morse} so that one can define the Morse boundary without reference to refined Morse gauges. Finally we show that for any proper geodesic metric space, the Morse boundary defined with Morse gauges and the refined Morse boundary, defined by refined Morse gauges only, are homeomorphic. A key step in this is Lemma \ref{lem:every gauge gives refined}, which associate to any Morse gauge $N$ a refined Morse gauge $\hat N$ such that all $N$-Morse geodesics are $\hat N$-Morse; essentially the lemma says that one can always replace Morse gauges with refined Morse gauges.

A major consequence of the incorrectness of Lemma 2.10 is that the $N$-Morse strata, $\partial_M^N X$, are not necessarily compact. Many papers which use the Morse boundary rely on Lemma 2.10 either directly or by using Theorem 3.14 of \cite{Cordes:2019ab}. Thankfully, by Theorem \ref{thm:homeo} anywhere where these results are called upon, unless relying specifically on Morse gauges that are not refined, one may simply substitute Morse gauges for refined Morse gauges and the same conclusion will follow. For instance, Theorem 3.14 of \cite{Cordes:2019ab} holds if one considers only refined Morse gauges.  

\subsection{Refined Morse boundary}
We now introduce the notion of refined Morse gauge and geodesic:

\begin{defn}
A refined Morse gauge is a Morse gauge $N \colon \mathbb{R}_{\geq 1} \times \mathbb{R}_{\geq 0} \to \mathbb{R}_{\geq 0}$ with the following additional properties:
\begin{enumerate}
    \item $N$ is non-decreasing
    \item $N$ is continuous in the second coordinate
\end{enumerate}
We denote the collection of refined Morse gauges $\widehat{\mathcal{M}}$.
\end{defn}

To correct the proof of Lemma 2.10, we do not actually need that a Morse gauge $N$ is non-decreasing, but it is generally useful. In particular, given a Morse gauge $N$ one has an associated constant $\delta_N$ (to be thought of as a hyperbolicity constant) which is useful in several ways. If $N\leq N'$ are non-decreasing Morse gauges, then $\delta_N\leq \delta_{N'}$, which does not hold in general otherwise (unless one redefines $\delta_N$ as proposed in Remark \ref{rem:deltan}). Furthermore, it is an intermediate step in the proof of Lemma \ref{lem:every gauge gives refined}. We now define a refined Morse quasi-geodesic.

\begin{defn}
Let $X$ be a metric space, $N$ a refined Morse gauge, and $I$ a closed interval of $\mathbb{R}$. The quasi-geodesic $\gamma \colon I \to X$ is a \emph{refined $N$-Morse geodesic} if for any $(\lambda, \epsilon)$-quasi-geodesic $\sigma$ with endpoints on $\gamma$, we have the image of $\sigma$ is contained in the closed $N(\lambda, \epsilon)$-neighborhood of $\gamma$.
\end{defn}

There are two differences between the definition of Morse geodesics and refined-Morse geodesics. The first is that we require the use of a \emph{refined} Morse gauge. The second is that we ask for the quasi-geodesic $\sigma$ to be in the \emph{closed} neighborhood of $\gamma$. 

We now present the counterexample to Lemma \ref{M-Morse uniformly converge to M-Morse}:
\begin{ex} \label{ex: counterexample}
Consider the space $X$ formed by taking the hyperbolic plane $\mathbb{H}^2$ and gluing in a line segment connecting two points $p,q \in \mathbb{H}^2$ of length $d(p,q)$. Thus in $X$ there are two geodesics between $p$ and $q$, one in $\mathbb{H}^2$ and the other along the line segment. Let $\alpha$ be a geodesic ray with basepoint $p$ passing through $q$ whose image is in $\mathbb{H}^2$ and let $\{\alpha_i\}$ be a collection of geodesic rays with image in $\mathbb{H}^2$ and basepoint $p$ converging to $\alpha$. Since $X$ is hyperbolic, every geodesic is uniformly $N$-Morse for some $N$. And because of the extra segment we glued on, $N(1,0)  \neq 0$. Define 
\[ N'(\lambda, \epsilon) = 
    \begin{cases} 
      0 & \lambda =1, \, \epsilon =0 \\
      N(\lambda, \epsilon) & \text{else} 
   \end{cases}
\]
Since $X$ is uniquely geodesic except for geodesics passing through $p$ and $q$, we know that the $\alpha_i$ are $N'$-Morse. Since $\alpha$ is not $N'$-Morse we have our counterexample. 

The Morse gauge $N'$ is not a refined Morse gauge because it is not continuous in the second coordinate. To see this, we note that $N'(1, \epsilon)=N(1,\epsilon)$ must be at least $\frac{1}{2}d(p,q)$ since a geodesic is a $(1, \epsilon)$-quasi-geodesic. So $N'$ is discontinuous at $0$. 
\end{ex}

The following lemma can be used in several contexts to harmlessly pass from Morse gauges to refined Morse gauges.

\begin{lem} \label{lem:every gauge gives refined}
%
For every Morse gauge $N$, there exists a refined Morse gauge $\widehat{N}$ with the property that every $N$-Morse geodesic is also an $\widehat{N}$-Morse geodesic.
\end{lem}

\begin{proof}
We will first construct a non-decreasing Morse gauge from $N$. This is done by setting 
\[
N'(\lambda, \epsilon)=\inf_{\lambda', \, \epsilon'} \{ N(\lambda', \epsilon') \mid \lambda' \geq \lambda, \, \epsilon' \geq \epsilon \}+1
\]
We see that by construction $N'$ is non-decreasing.

To arrange that $N'$ is continuous in the second coordinate, we first note that since we have arranged that $N'$ is non-decreasing, then if we fix a $\lambda \in \mathbb{R}_{\geq 1}$, the function $N'(\lambda, \epsilon) \colon \mathbb{R}_{\geq 0} \to \mathbb{R}_{\geq 0}$ is non-decreasing and thus (Riemann) integrable. We then set 
\[
\hat{N}(\lambda, \epsilon) = \int_{\epsilon}^{\epsilon+1} N'(\lambda, t) \, dt.
\]
A standard calculation using that $\hat N(\lambda, \cdot)$ is bounded on compact intervals shows that $\widehat{N}$ is continuous in the second coordinate. We also note that, since $N'$ is non-decreasing, we have $N' \leq \widehat{N}$.

Let $\alpha$ be an $N$-Morse geodesic and let $\sigma$ be a $(\lambda, \epsilon)$-geodesic with endpoints on $\alpha$. We note that $\sigma$ is also an $N(\lambda', \epsilon')$-quasi geodesic for any $\lambda' \geq \lambda$ and $\epsilon' \geq \epsilon$. Thus by construction, we know that $\sigma$ must be in the $N'(\lambda, \epsilon)$-neighborhood of $\alpha$ and since $N' \leq \widehat{N}$, we conclude that $\sigma$ is in the $\widehat{N}(\lambda, \epsilon)$-neighborhood of $\alpha$.
\end{proof}

We now state and prove the corrected version of Lemma \ref{M-Morse uniformly converge to M-Morse}.

\begin{lem}[Lemma 2.10 redux]
Let $X$ be a geodesic metric space and suppose that $\{ \gamma_i \colon \mathbb{R}_{\geq 0} \to X\}$ is a sequence of refined $N$-Morse geodesic rays that converge uniformly on compact sets to a geodesic ray $\gamma$. Then $\gamma$ is refined $N$-Morse. 
\end{lem}

\begin{proof}
Let $\eta > 0$. Let $\sigma$ be a $(\lambda, \epsilon)$-quasi-geodesic with endpoints $\gamma(s)$ and $\gamma(t)$ on $\gamma$. Since the $\gamma_i$ converge uniformly on compact sets and are refined $N$-Morse, there exists an $I \in \mathbb{N}$ such that for any $i \geq I$, we have $d(\gamma_i(t), \gamma(t)) \leq \eta$ on $\gamma|_{[s,t]}$. It follows that $\gamma$ is $N'$-Morse where $N'(\lambda, \epsilon) = N(\lambda, \epsilon +\eta)+ \eta$. Since this is true for all $\eta >0$ and $N$ is continuous in the second coordinate, we can conclude that $\sigma$ is in the closed $N(\lambda, \epsilon)$ neighborhood of $\gamma|_{[s,t]}$. It follows that $\gamma$ is refined $N$-Morse. 
\end{proof}

We wish to reprove Corollary \ref{maps continuous} to avoid using Lemma \ref{M-Morse uniformly converge to M-Morse} from the same paper so that we may define the Morse boundary without reference to refined Morse geodesics.

\begin{lem} \label{lem:N-Morse close}
	Let $X$ be a geodesic metric space and let $\alpha\colon [0, A] \to X$ be an $N$-Morse geodesic and $\beta \colon [0,A] \to X$ be any geodesic. Also assume that $\alpha(0)=\beta(0)$. Then for $t\in [0, A-d(\alpha(A),\beta(A))]$ we have $d(\alpha(t),\beta(t))<4N(3,0)$.
\end{lem}

\begin{proof}
	Let $\alpha(x)$ be the closest point on $\alpha$ to $\beta(b)$. By the triangle inequality $x \geq A-d(\alpha(A),\beta(A))$. It follows as in the proof of CASE 1 of Proposition \ref{morse close}, that the concatenation $\sigma =\beta([0,A]) \cup [\beta(A), \alpha(x)]$ is a $(3,0)$-quasi-geodesic. Since $\alpha$ is $N$-Morse, we know that $\sigma$ is in the $N(3,0)$-neighborhood of $\alpha([0,y])$ and by Lemma \ref{near morse means hausdorff close} we can bound the Hausdorff distance between $\alpha([0,y])$ and $\sigma$ by $2N(3,0)$. By a standard argument we may conclude that for all $t \in [0, A-d(\alpha(A),\beta(A))]$, we have $d(\alpha(t),\beta(t))<4N(3,0)$.
\end{proof}

We now reprove Corollary \ref{maps continuous}:

\begin{lem}[Corollary 3.2 redux] \label{lem:inclusion continuous}
Let $N$ and $N'$ be Morse gauges such that every $N$-Morse geodesic is also $N'$-Morse. Then the natural inclusion $i \colon \partial^N_M X_p \to \partial^{N'}_M X_p$ is continuous. In particular if $N \leq N'$ then this condition is satisfied.
\end{lem}

\begin{proof}
Let $U$ be an open set in $\partial^{N'}_M X_p$. We wish to show that $i^{-1}(U)$ is open. Let $x \in i^{-1}(U)$ and $\alpha_x$ a geodesic ray representing $x$. Since $i$ is an inclusion and $U$ is open in $\partial^{N'}_M X_p$, then there exists a $j \in \mathbb{N}$ so that $V_j^{N'}(\alpha_x) \subset U$. Let $k=j + 3\delta_{N}$. Applying Lemma \ref{lem:N-Morse close} thinking of $\alpha_x$ as an $N'$-Morse geodesic, we know that for any $y \in V_k^{N}(\alpha_x)$ and any geodesic $\alpha_y$ representing $y$, that $d(\alpha_x(t),\alpha_y(t))<4N'(3,0)$ for $t \in [0,j]$. So $i(V_k^{N}(\alpha_x))\subset V_j^{N'}(\alpha_x)\subset U$. Since we can do this for any $x \in i^{-1}(U)$ we can conclude that $i^{-1}(U)$ is open. 
\end{proof}

\begin{rem} \label{rem:deltan}
We note that one can also prove this using Corollary \ref{morse close same basepoint}, but we prove Lemma \ref{lem:N-Morse close} because it is a useful lemma of independent interest. In fact, in future research on the Morse boundary one might wish to redefine $\delta_N$ to be $4N(3,0)$. 
\end{rem}



\begin{defn}
The refined Morse boundary, denoted $\partial_{\widehat{M}} X_p$, is defined the same way as the Morse boundary as in Section 
\ref{sec:morse boundary defn}, but rather than considering all Morse gauges $\mathcal{M}$, we consider only refined Morse gauges $\widehat{\mathcal{M}}$.
\end{defn}

\begin{thm}\label{thm:homeo}
Let $X$ be a proper geodesic metric space. Then  the inclusion $\partial_{\widehat{M}} X \hookrightarrow \partial_M X$ induces a homeomorphism. 
\end{thm}

\begin{proof}
This follows from Lemma \ref{lem:every gauge gives refined}, Lemma \ref{lem:inclusion continuous}, and the universal property of direct limits. 
\end{proof}

\bibliography{library.bib}

\providecommand{\bysame}{\leavevmode\hbox to3em{\hrulefill}\thinspace}
\providecommand{\MR}{\relax\ifhmode\unskip\space\fi MR }
\providecommand{\MRhref}[2]{%
  \href{http://www.ams.org/mathscinet-getitem?mr=#1}{#2}
}
\providecommand{\href}[2]{#2}
\begin{thebibliography}{BMM11}

\bibitem[BH99]{bridson:1999fj}
Martin~R. Bridson and Andr\'e Haefliger, \emph{Metric spaces of non-positive
  curvature}, Grundlehren der Mathematischen Wissenschaften [Fundamental
  Principles of Mathematical Sciences], vol. 319, Springer-Verlag, Berlin,
  1999.

\bibitem[BMM11]{Brock:2011aa}
Jeffrey Brock, Howard Masur, and Yair Minsky, \emph{Asymptotics of
  {W}eil-{P}etersson geodesics {II}: bounded geometry and unbounded entropy},
  Geom. Funct. Anal. \textbf{21} (2011), no.~4, 820--850. \MR{2827011
  (2012j:32008)}

\bibitem[CH19]{Cordes:2019ab}
Matthew Cordes and David Hume, \emph{Relatively hyperbolic groups with fixed
  peripherals}, Israel J. Math. \textbf{230} (2019), no.~1, 443--470.
  \MR{3941154}

\bibitem[CK00]{Croke:2000aa}
Christopher~B. Croke and Bruce Kleiner, \emph{Spaces with nonpositive curvature
  and their ideal boundaries}, Topology \textbf{39} (2000), no.~3, 549--556.

\bibitem[CS15]{charney:2015aa}
Ruth Charney and Harold Sultan, \emph{Contracting boundaries of {$\rm CAT(0)$}
  spaces}, J. Topol. \textbf{8} (2015), no.~1, 93--117.

\bibitem[DDM14]{dowdall:2014aa}
Spencer Dowdall, Moon Duchin, and Howard Masur, \emph{Statistical hyperbolicity
  in {T}eichm\"uller space}, Geom. Funct. Anal. \textbf{24} (2014), no.~3,
  748--795. \MR{3213829}

\bibitem[DS05]{drutu:2005aa}
Cornelia Dru{\c{t}}u and Mark Sapir, \emph{Tree-graded spaces and asymptotic
  cones of groups}, Topology \textbf{44} (2005), no.~5, 959--1058, With an
  appendix by Denis Osin and Sapir. \MR{2153979 (2006d:20078)}

\bibitem[DT15]{durham:2015aa}
Matthew~Gentry Durham and Samuel~J. Taylor, \emph{Convex cocompactness and
  stability in mapping class groups}, Algebr. Geom. Topol. \textbf{15} (2015),
  no.~5, 2839--2859. \MR{3426695}

\bibitem[Dur16]{Durham:2013aa}
Matthew~Gentry Durham, \emph{The augmented marking complex of a surface},
  Journal of the London Mathematical Society (2016).

\bibitem[FM02]{farb:2002aa}
Benson Farb and Lee Mosher, \emph{Convex cocompact subgroups of mapping class
  groups}, Geom. Topol. \textbf{6} (2002), 91--152 (electronic).

\bibitem[Gro87]{gromov:1987aa}
Mikhael Gromov, \emph{Hyperbolic groups}, Essays in group theory, Math. Sci.
  Res. Inst. Publ., vol.~8, Springer, New York, 1987, pp.~75--263.

\bibitem[Ham10]{Hamenstadt:2010aa}
Ursula Hamenst{\"a}dt, \emph{Stability of quasi-geodesics in {T}eichm\"uller
  space}, Geom. Dedicata \textbf{146} (2010), 101--116. \MR{2644273
  (2011d:32021)}

\bibitem[LS14]{leininger:2014aa}
Christopher Leininger and Saul Schleimer, \emph{Hyperbolic spaces in
  {T}eichm\"uller spaces}, J. Eur. Math. Soc. (JEMS) \textbf{16} (2014),
  no.~12, 2669--2692.

\bibitem[Mas80]{masur:1980aa}
Howard Masur, \emph{Uniquely ergodic quadratic differentials}, Comment. Math.
  Helv. \textbf{55} (1980), no.~2, 255--266. \MR{576605}

\bibitem[Mas82a]{Masur:1982aa}
\bysame, \emph{Interval exchange transformations and measured foliations}, Ann.
  of Math. (2) \textbf{115} (1982), no.~1, 169--200. \MR{644018 (83e:28012)}

\bibitem[Mas82b]{masur:1982ab}
\bysame, \emph{Two boundaries of {T}eichm\"uller space}, Duke Math. J.
  \textbf{49} (1982), no.~1, 183--190. \MR{650376}

\bibitem[Min96]{minsky:1996aa}
Yair~N. Minsky, \emph{Quasi-projections in {T}eichm\"uller space}, J. Reine
  Angew. Math. \textbf{473} (1996), 121--136.

\bibitem[MM00]{Masur:2000aa}
H.~A. Masur and Y.~N. Minsky, \emph{Geometry of the complex of curves. {II}.
  {H}ierarchical structure}, Geom. Funct. Anal. \textbf{10} (2000), no.~4,
  902--974. \MR{1791145 (2001k:57020)}

\bibitem[Mun00]{munkres:2000aa}
J.R. Munkres, \emph{Topology}, Featured Titles for Topology Series, Prentice
  Hall, Incorporated, 2000.

\bibitem[MW95]{Masur:1995aa}
Howard~A. Masur and Michael Wolf, \emph{Teichm\"uller space is not {G}romov
  hyperbolic}, Ann. Acad. Sci. Fenn. Ser. A I Math. \textbf{20} (1995), no.~2,
  259--267.

\bibitem[Osi16]{osin:2016aa}
Denis~V. Osin, \emph{Acylindrically hyperbolic groups}, Trans. Amer. Math. Soc.
  \textbf{368} (2016), no.~2, 851--888. \MR{3430352}

\bibitem[Pap05]{papadopoulos:2005qy}
Athanase Papadopoulos, \emph{Metric spaces, convexity and nonpositive
  curvature}, IRMA Lectures in Mathematics and Theoretical Physics, vol.~6,
  European Mathematical Society (EMS), Z\"urich, 2005.

\bibitem[Pap07]{papadopoulos:2007aa}
Athanase Papadopoulos (ed.), \emph{Handbook of {T}eichm\"uller theory. {V}ol.
  {I}}, IRMA Lectures in Mathematics and Theoretical Physics, vol.~11, European
  Mathematical Society (EMS), Z\"urich, 2007.

\bibitem[Raf05]{Rafi:2005aa}
Kasra Rafi, \emph{A characterization of short curves of a {T}eichm\"uller
  geodesic}, Geom. Topol. \textbf{9} (2005), 179--202. \MR{2115672
  (2005i:30072)}

\bibitem[Raf07]{Rafi:2007aa}
\bysame, \emph{A combinatorial model for the {T}eichm\"uller metric}, Geom.
  Funct. Anal. \textbf{17} (2007), no.~3, 936--959. \MR{2346280 (2008j:30063)}

\bibitem[Sis16]{sisto:2016aa}
Alessandro Sisto, \emph{Quasi-convexity of hyperbolically embedded subgroups},
  Mathematische Zeitschrift (2016), 1--10.

\end{thebibliography}
\bibliographystyle{amsalpha}
\end{document}